\theoremstyle{plain}
\newtheorem{theorem}{Theorem}[section]
\newtheorem{corollary}[theorem]{Corollary}
\newtheorem{lemma}[theorem]{Lemma}
\newtheorem{proposition}[theorem]{Proposition}
\theoremstyle{definition}
\newtheorem{definition}[theorem]{Definition}
\theoremstyle{remark}
\newtheorem*{remark}{Remark}
\newcommand{\Z}{\mathbb{Z}}
\newcommand{\Q}{\mathbb{Q}}
\newcommand{\ch}{\mathrm{char}}
\newcommand{\F}{\mathbb{F}}
\newcommand{\im}{\mathrm{Im}}
\global\long\def\Gal{\operatorname{Gal}}
\newcommand{\R}{\mathbb{R}}
\newcommand{\HH}{\mathbb{H}}
\newcommand{\N}{\mathbb{N}}
\newcommand{\SL}{\operatorname{SL}}
\newcommand{\Leg}{\mathrm{Leg}}
\newcommand{\C}{\mathbb{C}}
\newcommand{\leg}[2]{\genfrac{(}{)}{}{}{#1}{#2}}
\numberwithin{equation}{section}
\begin{document}
\title[Kloosterman sums and Hurwitz class number]
{Fourth power moment of twisted Kloosterman sum and Hurwitz class numbers}

%    Only \author and \address are required; other information is
%    optional.  Remove any unused author tags.
%    author one information
 %\author[short version for running head]
{}
\author{Neelam Saikia}
\address{Department of Mathematics, Indian Institute of Technology Bhubaneswar}
\curraddr{}
\email{neelamsaikia@iitbbs.ac.in}
\thanks{}

%    author two information

%    \subjclass is required.
\subjclass[2010]{Primary: 11L05, 11F46, 11F11, 11G20; Secondary: 33E50.}
%\date{April 25, 2019}
\keywords{Kloosterman sums; Modular forms; Elliptic Curves; Class numbers.}
\thanks{}
\begin{abstract} 
In this paper, we investigate the fourth power moment of twisted Kloosterman sum and its relationship with Hurwitz class number. We derive an explicit formula expressing this moment in terms of weighted sums involving Hurwitz class numbers. Our approach involves analyzing point counting formulas associated with the resolution of certain Calabi–Yau threefold. Furthermore, we study the asymptotic behaviour of weighted sums of Hurwitz class numbers that appear in the moment formula.
To derive these asymptotic formulas, we employ the theory of harmonic Maass forms, mock modular forms and holomorphic projections. As an application of these asymptotic results, we obtain the asymptotic formula for the fourth power moment of twisted Kloosterman sums. 
\end{abstract}
\maketitle
\section{Introduction and statement of results}
Let $p$ be an odd prime and $\F_p$ denote a finite field with $p$ elements. Let $\theta:\mathbb{F}_p\rightarrow\mathbb{C}$ be the additive character defined by $\theta(x):= e^{\frac{2\pi ix}{p}}$ and $\overline{x}$ denote the multiplicative inverse of $x\in\mathbb{F}_p\setminus\{0\}$. For $a\in\mathbb{F}_p$, the classical Kloosterman sum is defined by
\begin{align}\label{eqn-1}
K(a,p):=\sum_{x\in\mathbb{F}_p^{\times}}\theta(x+a\overline{x}).
\end{align}

The first appearance of such exponential sums can be found in the works of Poincar\'{e} \cite{Poincare} and Kloosterman \cite{Kloosterman}. Studying estimates on Kloosterman sums play a central role in modern analytic number theory. For instance, Kloosterman established that for any odd prime $p$
$$
|K(a,p)|\leq 3^{1/4}p^{3/4}.
$$
However, the best known current bound is due to Weil \cite{Weil},
$$
|K(a,p)|\leq 2p^{1/2}.
$$
Among with other interesting problems, one of the intriguing problem is to investigate the power moments of Kloosterman sums, namely
$$S(n)_p:=\sum\limits_{a=0}^{p-1}K(a,p)^n.$$

By using elementary methods one can easily obtain

\begin{align}
    S(1)_p&=1,\ S(2)_p=p^2-p-1,\notag\\
    S(3)_p&=\leg{p}{3}p^2+2p=1, \ S(4)_p=2p^3-3p^2-1,\notag
\end{align}
where $\leg{\cdot}{p}$ is the Legendre symbol. Let $\theta_{a,p}\in[0,\pi]$ be such that $p^{-1/2}K(a,p)=2\cos(\theta_{a, p})$ which is called Kloosterman sum angles. Then the vertical Sato-Tate law is guaranteed for these angles that is the limiting distribution of the set $\{\theta_{a,p}: 0\leq a\leq p-1\}$ is semicircular as $p\rightarrow\infty.$ For reference, see the remarkable works of Katz \cite{Katz-1} and Adolphson \cite{Adolphson}. Kloosterman sums have wide impact on the theory of automorphic forms and related areas, see for example \cite{Iwanic}.
Further works on Kloosterman sums can be found in \cite{Schmidt, PTM, ce, di, ronald, Livne, Sa, Da, Hu, ronald, Moisio} etc. Moreover, introducing a character twist makes these exponential sums even more interesting.
To this end we recall power moments of twisted Kloosterman sums. Let $\chi$ be a multiplicative character of $\F_p$ (with the convention that $\chi(0)=0).$ Then the $n$-th moment of twisted Kloosterman sum is defined by
$$
S(n,\chi)_p=\sum\limits_{a=0}^{p-1}\chi(a)K(a,p)^n.
$$

%and with the help of this result along with Chebyshev polynomials Xi and Yi \cite{} computed asymptotic formulas for any even power moments of Kloosterman sums along with a bound for the odd power moments in the growth of $p.$ 
%%%%%%%%%%%%%%%%%%%%%%%%%%%%%%%

If $n=1,$ then it is easy to verify that $S(1,\chi)_p=g(\chi)^2=O(p).$ For $n=2,$ Liu \cite{Liu} expressed this sum in terms of Jacobi sum. For further study on twisted Kloosterman sums we refer to \cite{Conrey, Liu, lin-tu} etc.
In this paper, our object of interest is the fourth power moment of twisted Kloosterman sum, namely $S(4,\phi)_p,$ where $\phi$ is a quadratic character of $\F_p.$ We express this moment as weighted sums of Hurwitz class numbers which enumerate certain isomorphism classes of elliptic curves.
To state this result precisely, we recall some basic notation and definitions.
	If $-D<0$ such that $-D\equiv0,1\pmod{4},$ then $\mathcal{O}(-D)$ denotes the unique imaginary quadratic order with discriminant $-D.$ Let $h(D)=h(\mathcal{O}(-D))$ denote\footnote{We note that $H(D)=H^*(D)=h(D)=0$ whenever $-D$ is neither zero nor a negative discriminant.} the order of the class group of $\mathcal{O}(-D)$ and let $\Omega(D)=\Omega(\mathcal{O}(-D))$ denote half the number of roots of unity in $\mathcal{O}(-D).$  In this notation, we define the Hurwitz class numbers
	\begin{equation}
		H(D):=\sum\limits_{\mathcal{O}\subseteq\mathcal{O'}\subseteq\mathcal{O}_{\text{max}}}h(\mathcal{O'})
		\ \ \ {\text {\rm and}}\ \ \ 
		\ \ H^{\ast}(D):=\sum\limits_{\mathcal{O}\subseteq\mathcal{O'}\subseteq\mathcal{O}_{\text{max}}}\frac{h(\mathcal{O'})}{\Omega(\mathcal{O'})},
	\end{equation}
	where the sum is over all orders $\mathcal{O'}$ between $\mathcal{O}$ and the maximal order $\mathcal{O}_{\text{max}}.$ Here note that we have $H(D)=H^{\ast}(D)$ unless $\mathcal{O}_{\text{max}}=\Z[i]$ or $\Z\left[\frac{-1+\sqrt{-3}}{2}\right],$ where the terms corresponding to $\mathcal{O}_{\text{max}}$ differs by a factor of $2$ and $3$ respectively. As our first result we obtain the following:
	
\begin{theorem} \label{relation-2}
Suppose that $p>3$ be a prime and $-2\sqrt{p}<s<2\sqrt{p}$ be an integer. Then the following are true:\\

\noindent (1) If $p\equiv1\pmod{4},$ then 

$$
S(4,\phi)_p=-p^3+2p^2+4p\sum\limits_{s\equiv p+1\pmod{8}} H^*\left(\frac{4p-s^2}{4}\right)s^2 + 8p\sum\limits_{s\equiv p+1\pmod{16}} H^*\left(\frac{4p-s^2}{16}\right) s^2.
$$

\noindent (2) If $p\equiv3\pmod{4},$ then
$$
S(4,\phi)_p=-p^3+2p^2+4p\sum\limits_{s\equiv p+1\pmod{8}} H^*\left(\frac{4p-s^2}{4}\right) s^2.
$$

\end{theorem}

In the next two theorems we obtain asymptotic formulas of these weighted sums of Hurwitz class number. To estimate this moment, we employ the theory of harmonic Maass forms, mock modular forms and holomorphic projections. 

\begin{theorem}\label{case-1}
As the primes $p\rightarrow\infty$ with $p\equiv1\pmod{4},$ we have 
\begin{align}
\sum\limits_{s\equiv p+1\pmod{8}} H^*\left(\frac{4p-s^2}{4}\right)s^2=\frac{p^2}{6}+O_{\epsilon}(p^{\frac{3}{2}+\epsilon})\notag
\end{align}
for all $\epsilon>0.$

\end{theorem}

\begin{theorem}\label{case-3}
As the primes $p\rightarrow\infty$ with $p\equiv1\pmod{4},$ we have
\begin{align}
12\sum\limits_{s\equiv p+1\pmod{16}} H^*\left(\frac{4p-s^2}{16}\right)s^2=\frac{p^2}{2}+O_{\epsilon}(p^{\frac{3}{2}+\epsilon})\notag
\end{align}
for all $\epsilon>0.$
\end{theorem}

\begin{theorem}\label{case-2}
As the primes $p\rightarrow\infty$ with $p\equiv3\pmod{4},$ we have 
\begin{align}
\sum\limits_{s\equiv p+1\pmod{8}}H^*\left(\frac{4p-s^2}{4}\right)s^2=\frac{p^2}{4}+O_{\epsilon}(p^{\frac{3}{2}+\epsilon})\notag
\end{align}
for all $\epsilon>0.$
\end{theorem}

As a corollary of our results, we obtain the asymptotic formula for $S(4,\phi)_p.$

\begin{corollary}\label{twisted-moment-1}
As the primes $p\rightarrow\infty,$ we have 

$$
S(4,\phi)_p=O_{\epsilon}(p^{\frac{5}{2}+\epsilon})
$$
for all $\epsilon>0.$

\end{corollary}

\noindent For $a\neq0$, let $\pi(a)$ and $\overline{\pi}(a)$ denote the roots of the polynomial $X^2+K(a,p)X+p.$ Then one can write down the Kloosterman sum $K(a,p)$ in terms of the roots $\pi(a)$ and $\overline{\pi}(a).$ Now, consider the following exponential sum twisted by a character, namely
\begin{align}
M(n,\phi)_p:=\sum_{a\in\mathbb{F}_p^{\times}}\phi(a)(\pi(a)^n+\pi(a)^{n-1}\overline{\pi}(a)+\pi(a)^{n-2}\overline{\pi}(a)^2+\cdots +\pi(a)\overline{\pi}(a)^{n-1}+\overline{\pi}(a)^n).\notag
\end{align}
This is known as the $n$-th twisted Kloosterman sheaf sum of $\phi.$ Evans \cite{evans3} predicted relations of twisted Kloosterman sheaf sums with Gaussian hypergeometric functions and conjectured several relations connecting these twisted Kloosterman sheaf sums to the Fourier coefficients of modular forms. One of these conjectures was settled in \cite{DGP}.  As a consequence of 
Corollary \ref{twisted-moment-1}, we obtain asymptotic formula for $M(4,\phi)_p$ as $p$ grows.

\begin{corollary}\label{twisted-moment-2}
    \label{moment-1}
As the primes $p\rightarrow\infty,$ we have 

$$
M(4,\phi)_p=O_{\epsilon}(p^{\frac{5}{2}+\epsilon})
$$
for all $\epsilon>0.$

\end{corollary}

\begin{remark}
It is important to note that Corollary \ref{twisted-moment-1} and  Corollary \ref{twisted-moment-2} can also be derived independently by making use of Evan's conjecture \cite[Theorem 1.1]{DGP} and Theorem \ref{Deligne}. Therefore, Theorem \ref{case-2} can also be established by applying \cite[Theorem 1.1]{DGP} and Theorem \ref{Deligne} independently. However, Theorems \ref{case-1} and \ref{case-3} will not be settled in the same manner.
Indeed our approach to prove these theorems is rely on the theory of harmonic Maass forms, mock modular forms and holomorphic projections. 
\end{remark}

%%%%%%%%%%%%%%%%%%%%%%%%%%%
%%%%%%%%%%%%%%%%%%%%%%%%%%%%

The rest of the paper is structured as follows. In Section 2, we discuss Legendre elliptic curves and their quadratic twists. Section 3 is devoted to deriving the fourth power moment of twisted Kloosterman sums as second moment of  traces of Frobenius of certain family of Legendre elliptic curves. In Section 4, we prove Theorem \ref{relation-2}.  In Section 5, we briefly review harmonic Maass forms, Rankin–Cohen bracket operators, holomorphic projections, and certain results of Mertens. Using these tools, we establish asymptotic formulas for the weighted class number sums. In Section 6, we complete the proof of Theorems \ref{case-1}--\ref{case-2}. 
While deriving our results, we encounter certain intermediate consequences, which, in turn, lead to asymptotic average values of specific families of hypergeometric functions. We discuss these results in Section 7.

\section{Arithmetic of elliptic curves} 
In this section, we discuss several important facts about Legendre elliptic curves. These results will be used in the derivation of the fourth power moment of twisted Kloosterman sums. For $\lambda\in\F_p\setminus\{0,1\},$ let $$E^{\Leg}_{\lambda}:\ \ y^2=x(x-1)(x-\lambda)$$ be the Legendre elliptic curve over $\F_p,$ where $p>3$ be a prime and define
$$
a_p(\lambda):=p+1-|E^{\Leg}_\lambda(\F_p)|
$$
to be the trace of Frobenius of $E^{\Leg}_\lambda,$ where $|E^{\Leg}_\lambda(\F_p)|$ denote the number of $\F_p$-rational points of $E^{\Leg}_\lambda.$ It is straightforward to verify that $$a_p(\lambda)=-\sum\limits_{x\in\F_p}\phi(x(x-1)(x-\lambda)),$$ where $\phi$ is the quadratic character of $\F_p.$ Moreover, due to Hasse we have $|a_p(\lambda)|\leq 2\sqrt{p}.$

\begin{proposition}[Proposition 1.7, Chapter III of \cite{Silverman}]\label{elliptic-proposition-1}
Let $K$ be a field with $\ch(K)\neq2,3$.

\noindent
(1) Every elliptic curve $E\slash K$ is isomorphic over $\overline{K}$ to an elliptic curve $E^{\Leg}_\lambda.$

\noindent
(2)  If $\lambda\neq0,1$, then the $j$-invariant of $E^{\Leg}_\lambda$ is
$$j(E^{\Leg}_\lambda)=2^8\cdot\frac{(\lambda^2-\lambda+1)^3}{\lambda^2(\lambda-1)^2}.$$

\noindent
(3) The only $\lambda$ for which $j(E^{\Leg}_\lambda)=1728$ are $\lambda=2,-1,$ and $1/2$.

\noindent 
(4) The only $\lambda$ for which $j(E^{\Leg}_\lambda)=0$ are 
$\lambda=\frac{1\pm\sqrt{-3}}{2}.$

\noindent
(5)  For every $j\not \in \{0, 1728\},$ the map $K\setminus\{0,1\}\to j(E^{\Leg}_\lambda)$ is six to one. In particular, we have
$$
\left\{\lambda, \frac{1}{\lambda}, 1-\lambda, \frac{1}{1-\lambda}, \frac{\lambda}{\lambda-1},
\frac{\lambda-1}{\lambda}\right\}\to j(E^{\Leg}_\lambda).
$$
\end{proposition}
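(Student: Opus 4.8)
This is Proposition 1.7 of Chapter III of \cite{Silverman}, and the plan is to reproduce that argument. The strategy splits cleanly: parts (2)--(4) are routine computations with the Weierstrass invariants $b_i,c_4,c_6,\Delta$ of the explicit model $y^2=x(x-1)(x-\lambda)$; part (1) is a normalization step; and part (5) combines the $S_3$-symmetry of the root set $\{0,1,\lambda\}$ with a degree count for the rational map $\lambda\mapsto j(E^{\Leg}_\lambda)$.

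For (1) I would start from an arbitrary Weierstrass model of $E/K$; since $\ch(K)\neq2$, completing the square puts it in the form $y^2=f(x)$ with $f$ a monic cubic, and smoothness forces $f$ to have three distinct roots. Over $\overline{K}$ one writes $f(x)=(x-e_1)(x-e_2)(x-e_3)$ with the $e_i$ pairwise distinct and applies the coordinate change $x=(e_2-e_1)x'+e_1$, $y=(e_2-e_1)^{3/2}y'$ (legitimate over $\overline{K}$, which contains the required square root), which is an isomorphism onto $E^{\Leg}_\lambda$ with $\lambda=(e_3-e_1)/(e_2-e_1)\notin\{0,1\}$.

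For (2)--(4) I would expand $y^2=x^3-(1+\lambda)x^2+\lambda x$, read off the $a_i$, and compute $b_2=-4(1+\lambda)$, $b_4=2\lambda$, $b_6=0$, $b_8=-\lambda^2$, whence $c_4=16(\lambda^2-\lambda+1)$, $c_6=32(\lambda+1)(2\lambda-1)(\lambda-2)$, and $\Delta=16\lambda^2(\lambda-1)^2$. Since $\lambda\neq0,1$ this discriminant is nonzero, so $E^{\Leg}_\lambda$ really is an elliptic curve, and $j=c_4^3/\Delta$ is precisely the formula in (2). The identity $j-1728=c_6^2/\Delta$ then reduces (4) to solving $c_4=0$, i.e. $\lambda^2-\lambda+1=0$, giving $\lambda=\frac{1\pm\sqrt{-3}}{2}$, and reduces (3) to solving $c_6=0$, i.e. $\lambda\in\{-1,\tfrac12,2\}$; a quick check confirms these three values are distinct and lie in $K\setminus\{0,1\}$.

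For (5) I would note that the six listed values are exactly the orbit of $\lambda$ under the $S_3$-action generated by $\lambda\mapsto1/\lambda$ and $\lambda\mapsto1-\lambda$, each obtained by permuting the roots $\{0,1,\lambda\}$ of the defining cubic and re-normalizing as in (1); hence the six curves $E^{\Leg}_\mu$ are all $\overline{K}$-isomorphic and share a common $j$-invariant, so $\lambda\mapsto j(E^{\Leg}_\lambda)$ factors through the orbit space. As a map $\PP^1\to\PP^1$ it has degree $6$ (numerator of degree $6$ coprime to the degree-$4$ denominator, with a double pole at each of $\lambda=0,1,\infty$), so each fiber has six points counted with multiplicity; its branch locus is precisely $\{0,1728,\infty\}$, where the orbit degenerates (fiber $\{0,1,\infty\}$ over $\infty$, the two roots of $\lambda^2-\lambda+1$ over $0$, and $\{-1,\tfrac12,2\}$ over $1728$), so over any $j\notin\{0,1728\}$ the fiber consists of six distinct points, which can only be the $S_3$-orbit. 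Thus the map is exactly six-to-one there. The main obstacle is bookkeeping: verifying that the roots in (3) are distinct and miss $\{0,1\}$, and pinning down in (5) that ramification of $\lambda\mapsto j$ occurs exactly over $j\in\{0,1728,\infty\}$ — equivalently that for $j\notin\{0,1728\}$ the $S_3$-stabilizer of $\lambda$ is trivial — which is where the hypotheses $j\neq0,1728$ are genuinely used.
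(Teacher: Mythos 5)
Your proposal is correct and follows essentially the same route as the source: the paper gives no proof of its own but simply cites Proposition 1.7 of Chapter III of \cite{Silverman}, and your argument — completing the square and rescaling over $\overline{K}$ for (1), the Weierstrass-invariant computations $c_4=16(\lambda^2-\lambda+1)$, $c_6=32(\lambda+1)(2\lambda-1)(\lambda-2)$, $\Delta=16\lambda^2(\lambda-1)^2$ together with $c_4^3-c_6^2=1728\Delta$ for (2)--(4), and the $S_3$-orbit plus degree-$6$ fiber count for (5) — is precisely the standard proof found there. All the stated formulas check out, so no further comment is needed.
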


The following proposition provides the classification of quadratic twists of Legendre elliptic curves:

\begin{proposition}[Proposition 3.2 of \cite{ahl-ono-1}]\label{twists}
Suppose that $p\geq5$ is a prime and $\lambda\in\F_p\backslash\{0,1\}.$

\noindent
(1) $E^{\Leg}_{\lambda}$ is the $\lambda$ quadratic twist of $E^{\Leg}_{1/\lambda}.$

\noindent
(2) $E^{\Leg}_{\lambda}$ is the $-1$ quadratic twist of $E^{\Leg}_{1-\lambda}.$

\noindent
(3) $E^{\Leg}_{\lambda}$ is the $1-\lambda$ quadratic twist of $E^{\Leg}_{\lambda/(\lambda-1)}.$
\end{proposition}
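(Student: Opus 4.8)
The plan is to verify each of the three identities directly, by combining the explicit Weierstrass model of a quadratic twist with a suitable translation of the $x$-coordinate. Recall that for $d\in\F_p^{\times}$ the $d$-quadratic twist of an elliptic curve $E\slash\F_p$ written as $y^2=x^3+a_2x^2+a_4x+a_6$ is
$$E^{(d)}:\quad y^2=x^3+da_2x^2+d^2a_4x+d^3a_6,$$
and that $(x,y)\mapsto(d^{-1}x,\,d^{-3/2}y)$ is an isomorphism $E^{(d)}\to E$ defined over $\F_p(\sqrt d)$, which descends to $\F_p$ exactly when $d$ is a square. Applying this to $E^{\Leg}_{\mu}:\ y^2=x(x-1)(x-\mu)=x^3-(1+\mu)x^2+\mu x$ yields the clean identity
\begin{equation}\label{eq-twist-formula}
(E^{\Leg}_{\mu})^{(d)}:\quad y^2=x(x-d)(x-d\mu),
\end{equation}
which is where all the computation will take place. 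Note that for $\lambda\in\F_p\setminus\{0,1\}$ the twisting parameters $\lambda$, $-1$, $1-\lambda$ are all in $\F_p^{\times}$ and the Legendre parameters $1/\lambda$, $1-\lambda$, $\lambda/(\lambda-1)$ all lie in $\F_p\setminus\{0,1\}$, so each assertion makes sense.

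For part (1) I would apply \eqref{eq-twist-formula} with $\mu=1/\lambda$ and $d=\lambda$: the right-hand side is $x(x-\lambda)(x-\lambda\cdot\tfrac1\lambda)=x(x-1)(x-\lambda)$, which is already $E^{\Leg}_{\lambda}$, so no further change of variables is needed. For part (2), take $\mu=1-\lambda$ and $d=-1$ in \eqref{eq-twist-formula}, giving $y^2=x(x+1)(x+1-\lambda)$, whose $x$-roots are $0,-1,\lambda-1$; the substitution $x\mapsto x-1$ — an admissible change of Weierstrass coordinates, hence an $\F_p$-isomorphism of elliptic curves — moves the roots to $1,0,\lambda$ and turns the equation into $E^{\Leg}_{\lambda}$. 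For part (3), take $\mu=\lambda/(\lambda-1)$ and $d=1-\lambda$; since $(1-\lambda)\cdot\frac{\lambda}{\lambda-1}=-\lambda$, equation \eqref{eq-twist-formula} reads $y^2=x(x+\lambda-1)(x+\lambda)$, with $x$-roots $0,1-\lambda,-\lambda$, and the translation $x\mapsto x-\lambda$ again produces $E^{\Leg}_{\lambda}$.

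I do not anticipate any real obstacle, as the argument is a short and robust computation. The one point that genuinely deserves care — and is really the whole content of the statement — is the distinction between isomorphisms over $\F_p$ and over $\overline{\F_p}$: after forming the $d$-twist, the potentially problematic rescaling by $\sqrt d$ is already absorbed into \eqref{eq-twist-formula}, so the only transformation still used to reach Legendre form is a translation of $x$, which is manifestly defined over $\F_p$. This is precisely why the identifications hold over the prime field rather than merely over its algebraic closure. One could instead argue conceptually: by Proposition \ref{elliptic-proposition-1}(5), for $j\notin\{0,1728\}$ the curves $E^{\Leg}_{\lambda}$, $E^{\Leg}_{1/\lambda}$, $E^{\Leg}_{1-\lambda}$, $E^{\Leg}_{\lambda/(\lambda-1)}$ share a $j$-invariant and hence are quadratic twists of one another, so only the twisting parameter needs to be pinned down — but the direct verification above is both cleaner and uniform in $\lambda\in\F_p\setminus\{0,1\}$, with no need to separate the $j\in\{0,1728\}$ cases.
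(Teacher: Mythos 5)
Your computation is correct: the identity $(E^{\Leg}_{\mu})^{(d)}:y^2=x(x-d)(x-d\mu)$ is right, the three choices of $(\mu,d)$ produce polynomials with roots $\{0,1,\lambda\}$ after the indicated $\F_p$-rational translations, and the point you flag (that only a translation, not a square-root rescaling, is needed after twisting) is exactly what makes the isomorphisms hold over $\F_p$. The paper gives no argument of its own here — it simply quotes Proposition 3.2 of Ahlgren–Ono — and your direct verification is the standard proof of that cited result.
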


We will also need the following two theorems characterizing an elliptic curve $E/\F_p$ whose group of $\F_p$-rational points contain $\Z/2\Z\times \Z/4\Z$ or $\Z/4\Z\times \Z/4\Z.$

\begin{proposition}[Proposition 3.3 (1) of \cite{ahl-ono-1}]\label{isomorphism-1}
Let $E/\F_p$ be an elliptic curve such that its group of $\F_p$-rational points contains the subgroup $\Z/2\Z\times \Z/4\Z.$ Then there exist $\lambda\in\F_p\backslash\{0,\pm1\}$ such that $E^{\Leg}_{\lambda^2}$ is isomorphic over $\F_p$ to the curve $E.$ Furthermore, if $\lambda\in\F_p\backslash\{0,\pm1\},$ then the group of $\F_p$-rational points of $E^{\Leg}_{\lambda^2}$ contains the subgroup $\Z/2\Z\times \Z/4\Z.$
\end{proposition}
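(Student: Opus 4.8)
The plan is to deduce both statements from the classical theory of complete $2$-descent. The one external ingredient is the standard fact (see Chapter X of \cite{Silverman}; it also follows directly from the explicit duplication and halving formulas): if $E/K$ is given by $y^2=(x-e_1)(x-e_2)(x-e_3)$ with $\ch(K)\neq2$ and $e_1,e_2,e_3$ distinct, then $(e_1,0)\in2E(K)$ if and only if both $e_1-e_2$ and $e_1-e_3$ are squares in $K^{\times}$ (and likewise under permutation of the indices).

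\textbf{The ``furthermore'' assertion.} Fix $\lambda\in\F_p\setminus\{0,\pm1\}$ and put $\mu=\lambda^2\notin\{0,1\}$. Then $E^{\Leg}_{\mu}$ has all three nontrivial $2$-torsion points $(0,0),(1,0),(\mu,0)$ defined over $\F_p$. By the criterion, $(\mu,0)\in2E^{\Leg}_{\mu}(\F_p)$ iff $\mu$ and $\mu-1$ are squares, $(1,0)\in2E^{\Leg}_{\mu}(\F_p)$ iff $1-\mu$ is a square, and $(0,0)\in2E^{\Leg}_{\mu}(\F_p)$ iff $-1$ and $-\mu$ are squares. Since $\mu=\lambda^2$ is a square: if $-1$ is a square in $\F_p$ then $-\mu$ is too, so $(0,0)$ is divisible by $2$; if $-1$ is a nonsquare then, since $\mu\neq1$, exactly one of $\mu-1$ and $1-\mu$ is a square, so one of $(\mu,0),(1,0)$ is divisible by $2$. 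Either way there is a point $Q\in E^{\Leg}_{\mu}(\F_p)$ of order $4$; since $\langle Q\rangle\cong\Z/4\Z$ contains only one point of order $2$, we may choose a rational $2$-torsion point $T'\notin\langle Q\rangle$, and then $\langle Q,T'\rangle\cong\Z/4\Z\times\Z/2\Z$ sits inside $E^{\Leg}_{\mu}(\F_p)$.

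\textbf{The first assertion.} Assume $E(\F_p)\supseteq\Z/2\Z\times\Z/4\Z$. Then $E$ has full rational $2$-torsion, so it admits a model $y^2=(x-e_1)(x-e_2)(x-e_3)$ with distinct $e_i\in\F_p$, and it has a point of order $4$ whose double is one of the $2$-torsion points, which after relabelling I take to be $(e_1,0)$. By the criterion, $e_1-e_2=a^2$ and $e_1-e_3=b^2$ with $a,b\in\F_p^{\times}$ and $a^2\neq b^2$ (the roots are distinct). I would now pass to Legendre form over $\F_p$ by a substitution $x=c^2X+t$, $y=c^3Y$, where $t$ is one root and $c^2\in\F_p^{\times}$ is the difference of two roots (legitimate precisely when that difference is a square in $\F_p$), choosing $t$ and $c^2$ so that the resulting parameter $\mu$ is itself a square. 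The choice $t=e_2$, $c^2=e_1-e_2=a^2$ is always admissible and gives $\mu=(e_3-e_2)/(e_1-e_2)=1-(b/a)^2$; the choice $t=e_1$, $c^2=e_2-e_1$ is admissible iff $-1$ is a square and then gives $\mu=(b/a)^2$; the choices with $c^2=\pm(a^2-b^2)$ are admissible iff that quantity is a square and then give $\mu$ a ratio of two squares. If $-1$ is a square use the second option; if $-1$ is a nonsquare then exactly one of $a^2-b^2,\,b^2-a^2$ is a square and the corresponding option works. In all cases $E\cong_{\F_p}E^{\Leg}_{\mu}$ with $\mu$ a square, say $\mu=\lambda^2$; and $\mu\neq0,1$ because the $e_i$ are distinct, so $\lambda\in\F_p\setminus\{0,\pm1\}$.

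I expect the only delicate point --- the crux --- to be this last step: guaranteeing that the Legendre parameter can be taken to be a square in $\F_p$ \emph{itself}, which amounts to tracking the quadratic twist that appears according to whether $-1$ is a square, and is made possible exactly by the extra squares $a^2,b^2$ supplied by the $2$-divisibility of $(e_1,0)$. One could instead first reach $E\cong_{\F_p}E^{\Leg}_{\nu}$ for some $\nu$ and then move among $\nu,1/\nu,1-\nu,1/(1-\nu),\nu/(\nu-1),(\nu-1)/\nu$ via Proposition \ref{twists} until landing on a perfect square, but the bookkeeping is the same. Everything else --- verifying that the listed substitutions produce the stated parameters, and the elementary square/nonsquare facts about $\F_p^{\times}$ --- is routine.
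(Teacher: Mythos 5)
Your argument is correct. Note first that the paper itself offers no proof of this statement: it is imported verbatim as Proposition~3.3(1) of \cite{ahl-ono-1}, so there is no internal argument to compare against; what you have written is a self-contained proof in the spirit of the cited source. The two ingredients check out. The halving criterion for $2$-torsion (that $(e_1,0)\in 2E(K)$ iff $e_1-e_2$ and $e_1-e_3$ are squares) is valid over any field of characteristic $\neq 2$, and your case analysis on whether $-1$ is a square correctly produces a point of order $4$ on $E^{\Leg}_{\lambda^2}$ in the ``furthermore'' direction. In the converse direction, the substitutions $x=c^2X+t$, $y=c^3Y$ are $\F_p$-isomorphisms exactly when $c^2$ is a square, and your bookkeeping is right: with $e_1-e_2=a^2$, $e_1-e_3=b^2$, the choice $t=e_1$, $c^2=-a^2$ gives $\mu=(b/a)^2$ when $-1$ is a square, while if $-1$ is a nonsquare exactly one of $\pm(a^2-b^2)$ is a square and the corresponding choice gives $\mu=a^2/(a^2-b^2)$ or $b^2/(b^2-a^2)$, a ratio of squares; in each case $\mu\neq 0,1$ because $a,b\neq 0$ and $a^2\neq b^2$. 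You correctly identified the crux --- arranging the Legendre parameter to be a square \emph{in $\F_p$}, not merely after a quadratic twist --- and the extra squares furnished by the $2$-divisibility of $(e_1,0)$ are exactly what makes this possible.
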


\begin{proposition}[Proposition 3.3 (2) of \cite{ahl-ono-1}]\label{isomorphism-2}
Suppose that $\lambda\in\F_p\backslash\{0,\pm1\}.$ Then the group of $\F_p$-rational points of $E^{\Leg}_{\lambda^2}$ contains the subgroup  $\Z/4\Z\times \Z/4\Z$ if and only if $p\equiv1\pmod{4}$ and $\lambda^2-1$ is a square in $\F_p$.
\end{proposition}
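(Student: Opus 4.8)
The statement is equivalent to the assertion that $E[4]\subseteq E(\F_p)$ for the curve $E:=E^{\Leg}_{\lambda^2}\colon y^2=x(x-1)(x-\lambda^2)$, since any subgroup of $E(\overline{\F_p})$ isomorphic to $\Z/4\Z\times\Z/4\Z$ consists of $16$ points of order dividing $4$ and must therefore coincide with $E[4]$. The plan is to prove the two implications separately: the Weil pairing will take care of the congruence $p\equiv1\pmod4$, while an explicit computation with the duplication formula will produce the condition on $\lambda^2-1$. Throughout, the three nontrivial $2$-torsion points of $E$ are $(0,0)$, $(1,0)$ and $(\lambda^2,0)$, all rational over $\F_p$.

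For the ``only if'' direction I would argue as follows. Assuming $E[4]\subseteq E(\F_p)$, the Weil pairing $e_4\colon E[4]\times E[4]\to\mu_4$ is nondegenerate, surjective and Galois-equivariant; since Frobenius fixes $E[4]$ pointwise it fixes $\mu_4$ pointwise, hence $\mu_4\subseteq\F_p$ and $p\equiv1\pmod4$. Next, the $2$-torsion point $(1,0)$ equals $2P$ for some $P\in E[4]\subseteq E(\F_p)$, and $P$ is not $2$-torsion. After the substitution $x\mapsto u+1$, the curve takes the form $y^2=u(u+1)\bigl(u-(\lambda^2-1)\bigr)$ and $(1,0)$ moves to the origin, so the duplication formula $x(2P)=\bigl(x(P)^2-rs\bigr)^2\big/\bigl(4y(P)^2\bigr)$ for $y^2=u(u-r)(u-s)$ forces $x(P)^2=rs=(-1)(\lambda^2-1)=1-\lambda^2$. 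As $x(P)\in\F_p$, this shows $1-\lambda^2$, and hence $\lambda^2-1$, is a square in $\F_p$.

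For the ``if'' direction, assume $p\equiv1\pmod4$ and that $\lambda^2-1$ is a square, and fix $i=\sqrt{-1}\in\F_p$. The idea is to write down two explicit $\F_p$-rational points of exact order $4$ lying over two distinct $2$-torsion points. First, $P_1=(\lambda,\,i\lambda(\lambda-1))$ lies on $E$ because $\lambda(\lambda-1)(\lambda-\lambda^2)=-\bigl(\lambda(\lambda-1)\bigr)^2$, and $x(P_1)^2=\lambda^2=1\cdot\lambda^2$ (the product of the roots $1$ and $\lambda^2$) shows that $2P_1=(0,0)$. Second, putting $u_0:=\sqrt{1-\lambda^2}\in\F_p$, the point $P_2=(u_0+1,\,u_0(u_0+1))$ lies on $E$ because, using $u_0^2=1-\lambda^2$, one checks $u_0(u_0+1)\bigl(u_0-(\lambda^2-1)\bigr)=\bigl(u_0(u_0+1)\bigr)^2$, and $u_0^2=1-\lambda^2=(-1)(\lambda^2-1)$ is the product of the roots $-1$ and $\lambda^2-1$ of the shifted cubic, so $2P_2=(1,0)$. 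Since $2P_1=(0,0)\neq(1,0)=2P_2$, the order-$4$ cyclic groups $\langle P_1\rangle$ and $\langle P_2\rangle$ meet only in $O$, whence $\langle P_1,P_2\rangle\cong\Z/4\Z\times\Z/4\Z$ is contained in $E(\F_p)$.

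The conceptually easy inputs here are the reduction to $E[4]$ and the Weil pairing; the part that needs care is the arithmetic bookkeeping in the halving step. In general the criterion for a $2$-torsion point of $y^2=u(u-r)(u-s)$ to be $2$-divisible over $\F_p$ demands that both $rs$ and $2\sqrt{rs}-r-s$ be squares, and I expect the main obstacle to be checking cleanly that, for the Legendre model $E^{\Leg}_{\lambda^2}$ and once $-1$ is a square, these combine into the single requirement that $\lambda^2-1$ be a square — which is precisely what the identities $-\bigl(\lambda(\lambda-1)\bigr)^2$ and $\bigl(u_0(u_0+1)\bigr)^2$ above are designed to encode.
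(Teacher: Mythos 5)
Your proof is correct; note that the paper itself offers no argument for this statement (it is imported verbatim as Proposition 3.3(2) of \cite{ahl-ono-1}), and your route --- reducing to $E[4]\subseteq E(\F_p)$, using the Weil pairing to force $\mu_4\subseteq\F_p$, and analyzing halvability of the $2$-torsion via the duplication formula $x(2P)=\bigl(x(P)^2-b\bigr)^2/4y(P)^2$ --- is the standard argument used in that source. The explicit points check out: $P_1=(\lambda,\,i\lambda(\lambda-1))$ and $P_2=(u_0+1,\,u_0(u_0+1))$ lie on $E^{\Leg}_{\lambda^2}$ and double to the distinct $2$-torsion points $(0,0)$ and $(1,0)$, so they generate a copy of $\Z/4\Z\times\Z/4\Z$.
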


To this end, for $\lambda\in\F_p\setminus\{0,\pm1\}$ we define 
$L(\lambda):=\{\pm\mu\in\F_p\setminus\{0,\pm1\}: E_{\lambda^2}^{\Leg}
\cong_{\F_p}E_{\mu^2}^{\Leg}\}.$ The following lemmas provides the size of $L(\lambda)$ depending certain conditions.

\begin{lemma}\label{final-count-1}
 Let $p\equiv1\pmod{4},$ $\lambda\in\F_p\setminus\{0,\pm1\}$ and $j(E_{\lambda^2}^{\Leg})\neq0,1728.$ Then we have the following
 $$|L(\lambda)|=\begin{cases}
     4, & \text{if}\ 1-\lambda^2\neq\square\\
     12,  & \text{if}\ 1-\lambda^2=\square.
 \end{cases}$$
\end{lemma}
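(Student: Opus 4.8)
The plan is to translate the question into one about the $S_3$-orbit of the Legendre parameter $\lambda^2$ and the behaviour of the quadratic character $\phi$ on that orbit. If $E_{\mu^2}^{\Leg}\cong_{\F_p}E_{\lambda^2}^{\Leg}$, then the two curves have the same $j$-invariant, so by Proposition \ref{elliptic-proposition-1}(5) the parameter $\mu^2$ lies in
$$\mathcal{O}:=\left\{\lambda^2,\ \frac{1}{\lambda^2},\ 1-\lambda^2,\ \frac{1}{1-\lambda^2},\ \frac{\lambda^2}{\lambda^2-1},\ \frac{\lambda^2-1}{\lambda^2}\right\}.$$
Because $j(E_{\lambda^2}^{\Leg})\neq0,1728$, the parameter $\lambda^2$ avoids the exceptional orbits of Proposition \ref{elliptic-proposition-1}(3),(4), so $|\mathcal{O}|=6$, every element of $\mathcal{O}$ lies in $\F_p\setminus\{0,1\}$, and in particular $\lambda^2\neq-1$. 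Thus $L(\lambda)=\{\mu\in\F_p\setminus\{0,\pm1\}:\mu^2\in\mathcal{O}\ \text{and}\ E_{\mu^2}^{\Leg}\cong_{\F_p}E_{\lambda^2}^{\Leg}\}$.

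Next I would determine which elements of $\mathcal{O}$ are squares in $\F_p$. Since $p\equiv1\pmod4$ we have $\phi(-1)=1$, and $\phi(\lambda^2)=1$ trivially; a one-line computation with the Legendre symbol then gives $\phi(\lambda^{-2})=1$ and $\phi(1-\lambda^2)=\phi\big(\tfrac{1}{1-\lambda^2}\big)=\phi\big(\tfrac{\lambda^2}{\lambda^2-1}\big)=\phi\big(\tfrac{\lambda^2-1}{\lambda^2}\big)$. Hence $\mathcal{O}$ contains exactly two squares when $1-\lambda^2\neq\square$ and all six are squares when $1-\lambda^2=\square$.

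The main point, and the step requiring the most care, is to show that whenever $\nu\in\mathcal{O}$ is a square, the isomorphism $E_\nu^{\Leg}\cong_{\overline{\F_p}}E_{\lambda^2}^{\Leg}$ already descends to $\F_p$. For this I would iterate the three relations of Proposition \ref{twists} at the parameter $\lambda^2$, using that composing quadratic twists multiplies the twisting parameters, in order to write each $E_\nu^{\Leg}$ as the quadratic twist of $E_{\lambda^2}^{\Leg}$ by an explicit $d(\nu)$; for the six elements of $\mathcal{O}$, in the order listed, one obtains $d(\nu)=1,\ \lambda^2,\ -1,\ \lambda^2-1,\ 1-\lambda^2,\ -\lambda^2$. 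Since $-1$ and $\lambda^2$ are squares, $\phi(d(\nu))=1$ for the first three and the last, while $\phi(d(\nu))=\phi(1-\lambda^2)$ for $\nu\in\{\tfrac{1}{1-\lambda^2},\tfrac{\lambda^2}{\lambda^2-1}\}$; but those two $\nu$ are themselves squares only when $1-\lambda^2$ is a square, in which case $\phi(d(\nu))=1$ as well. As $j(E_{\lambda^2}^{\Leg})\neq0,1728$ forces every twist of $E_{\lambda^2}^{\Leg}$ to be a quadratic twist, and a quadratic twist by $d$ is $\F_p$-isomorphic to the original precisely when $d$ is a square, this shows that $\nu\in\mathcal{O}\cap(\F_p^\times)^2$ implies $E_\nu^{\Leg}\cong_{\F_p}E_{\lambda^2}^{\Leg}$.

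Combining the three steps, $L(\lambda)=\{\mu\in\F_p\setminus\{0,\pm1\}:\mu^2\in\mathcal{O}\cap(\F_p^\times)^2\}$. Each $\nu\in\mathcal{O}\cap(\F_p^\times)^2$ contributes precisely its two square roots $\pm\sqrt\nu$; these are distinct, lie in $\F_p\setminus\{0,\pm1\}$ since $\nu\notin\{0,1\}$, and for different $\nu$ the square-root pairs are disjoint. Therefore $|L(\lambda)|=2\,|\mathcal{O}\cap(\F_p^\times)^2|$, which equals $4$ if $1-\lambda^2\neq\square$ and $12$ if $1-\lambda^2=\square$, as claimed. The only genuine obstacle is the bookkeeping in the third step — keeping the iterated twist parameters $d(\nu)$ straight modulo squares — together with the small structural checks ($|\mathcal{O}|=6$, $\lambda^2\neq-1$, $0,1\notin\mathcal{O}$), all of which follow from the hypotheses $\lambda\neq0,\pm1$ and $j(E_{\lambda^2}^{\Leg})\neq0,1728$.
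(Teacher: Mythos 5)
Your proof is correct and takes essentially the same route as the paper: the paper's proof is only a one-sentence sketch instructing the reader to apply Proposition \ref{twists} and split into cases according to whether $1-\lambda^2$ is a square in $\F_p$, which is exactly the case analysis you carry out. Your write-up simply supplies the details the paper omits --- the six-element $j$-invariant orbit from Proposition \ref{elliptic-proposition-1}(5), the explicit twist parameters modulo squares (which I have verified), and the final count --- and these details check out.
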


\begin{proof}
We proceed by applying Proposition \ref{twists}, and we consider two separate cases based on whether $1-\lambda^2$
  is a square in the underlying field $\F_p.$
\end{proof}
In a similar fashion, the next lemma follows readily from Proposition \ref{twists} by applying the same case-based analysis.
\begin{lemma}\label{final-count-2}
 Let $p\equiv3\pmod{4}$ and $j(E_{\lambda^2}^{\Leg})\neq0,1728.$ Then we have $|L(\lambda)|=4.$
\end{lemma}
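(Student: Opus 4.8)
The plan is to follow the recipe already indicated for Lemma~\ref{final-count-1}: reduce everything to Proposition~\ref{twists} together with the six-to-one description in Proposition~\ref{elliptic-proposition-1}(5), and split into cases according to whether $1-\lambda^2$ is a square in $\F_p$. The key observation I would make explicit is that a value $\nu\in\F_p\setminus\{0,1\}$ contributes a pair $\pm\mu$ to $L(\lambda)$ exactly when \emph{both} of the following hold: (i) $E^{\Leg}_{\nu}\cong_{\F_p}E^{\Leg}_{\lambda^2}$, and (ii) $\nu$ is a nonzero square in $\F_p$, in which case its two square roots $\pm\mu$ both lie in $L(\lambda)$. Since $j(E^{\Leg}_{\lambda^2})\notin\{0,1728\}$, Proposition~\ref{elliptic-proposition-1}(5) says the only $\nu$ that can make $E^{\Leg}_{\nu}$ even $\overline{\F_p}$-isomorphic to $E^{\Leg}_{\lambda^2}$ are the six distinct values
$$
\left\{\lambda^2,\ \frac{1}{\lambda^2},\ 1-\lambda^2,\ \frac{1}{1-\lambda^2},\ \frac{\lambda^2}{\lambda^2-1},\ \frac{\lambda^2-1}{\lambda^2}\right\},
$$
so it is enough to test (i) and (ii) on these six.

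For (i), I would compute, for each of the six parameters, the quadratic twist factor of $E^{\Leg}_{\nu}$ relative to $E^{\Leg}_{\lambda^2}$ by composing the three basic twist relations of Proposition~\ref{twists} (for instance the factor attached to $1/(1-\lambda^2)$ is $-(1-\lambda^2)$, coming from part~(2) followed by part~(1)); one finds the factors $1,\ \lambda^2,\ -1,\ \lambda^2-1,\ 1-\lambda^2,\ -\lambda^2$ in the order listed above, and $E^{\Leg}_{\nu}\cong_{\F_p}E^{\Leg}_{\lambda^2}$ holds iff the corresponding factor is a square. Because $p\equiv 3\pmod 4$, $-1$ is a nonsquare, so $\lambda^2$ and $1/\lambda^2$ always satisfy (i) while $1-\lambda^2$ and $(\lambda^2-1)/\lambda^2$ never do. For the last two I would split cases: since $1-\lambda^2$ and $\lambda^2-1$ differ by the nonsquare $-1$ and $1-\lambda^2\neq 0$, exactly one of them is a square, hence exactly one of $\lambda^2/(\lambda^2-1),\ 1/(1-\lambda^2)$ satisfies (i).

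It then remains to observe — and this is the only point that takes any care, and the place where I expect the argument could be gotten wrong — that the unique surviving parameter among the last two never satisfies (ii): if $1-\lambda^2$ is a square then $\lambda^2/(\lambda^2-1)=\lambda^2\cdot(\lambda^2-1)^{-1}$ is (square)$\times$(nonsquare), a nonsquare, while if $1-\lambda^2$ is a nonsquare then $1/(1-\lambda^2)$ is itself a nonsquare. Hence only $\nu=\lambda^2$ and $\nu=1/\lambda^2$ contribute, and I would finish by verifying that their four square roots $\lambda,-\lambda,\lambda^{-1},-\lambda^{-1}$ are pairwise distinct and lie in $\F_p\setminus\{0,\pm1\}$; this uses $\lambda\neq 0,\pm1$ and $\lambda^2\neq -1$, the latter again because $-1$ is a nonsquare modulo $p\equiv 3\pmod 4$ (which also forces $1/\lambda^2\neq\lambda^2$, so the two parameters genuinely give four distinct $\mu$). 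Therefore $|L(\lambda)|=4$, as claimed; the computation is entirely elementary, the only real obstacle being the bookkeeping that keeps conditions (i) and (ii) from being conflated.
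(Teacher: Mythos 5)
Your argument is correct and is exactly the route the paper takes: the paper's proof of Lemma~\ref{final-count-2} is just the one-line remark that it follows from Proposition~\ref{twists} by the same case analysis (on whether $1-\lambda^2$ is a square) used for Lemma~\ref{final-count-1}, and your write-up supplies precisely those details. In particular your twist factors $1,\ \lambda^2,\ -1,\ \lambda^2-1,\ 1-\lambda^2,\ -\lambda^2$ are right, and the observation that the one $\F_p$-isomorphic parameter among $1/(1-\lambda^2)$ and $\lambda^2/(\lambda^2-1)$ is never itself a square is the correct reason the count stays at $4$ in both cases.
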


For $j=0$ and $j=1728,$ we have the following two lemmas, respectively:

\begin{lemma}\label{final-count-3}
If $E_{\lambda^2}^{\Leg}/\F_p$ has $j(E_{\lambda^2}^{\Leg})=0,$ then the following are true:\\
\noindent (1) If $p\equiv2\pmod{3},$ then there are no $E_{\lambda^2}^{\Leg}$ exists.\\

\noindent (2) If $p\equiv1\pmod{12},$ then $\frac{1\pm\sqrt{-3}}{2}$ both are squares in $\F_p$ and $|L(\lambda)|=4.$\\

\noindent (3) If $p\equiv7\pmod{12},$ then $\frac{1\pm\sqrt{-3}}{2}$ both are not squares in $\F_p$ and $|L(\lambda)|=0.$
\end{lemma}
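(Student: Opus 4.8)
The approach will mirror the (one-line) proofs of Lemmas \ref{final-count-1} and \ref{final-count-2}: invoke Proposition \ref{twists} together with a case analysis, but with the extra input that the six-element Legendre orbit degenerates at $j=0$. Throughout write $\mu_{\pm}:=\frac{1\pm\sqrt{-3}}{2}$ for the two roots of $X^{2}-X+1$; then $\mu_{+}\mu_{-}=1$, $\mu_{+}+\mu_{-}=1$, and since $X^{2}-X+1\mid X^{3}+1$ each $\mu_{\pm}$ is a primitive sixth root of unity (for $p>3$ one checks $\mu_{\pm}\notin\{0,\pm1\}$ and that the order is exactly $6$, using $\mu_{+}^{2}=\mu_{+}-1$ and $\mu_{+}^{3}=-1$).

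First I would dispose of part (1) and the existence dichotomy. By Proposition \ref{elliptic-proposition-1}(4), $j(E^{\Leg}_{\lambda^{2}})=0$ forces $\lambda^{2}\in\{\mu_{+},\mu_{-}\}$, so such a $\lambda$ can exist only if $\mu_{\pm}\in\F_{p}$, i.e. only if $-3$ is a square modulo $p$, which by quadratic reciprocity is equivalent to $p\equiv1\pmod 3$. This is exactly part (1): for $p\equiv2\pmod 3$ no $\lambda\in\F_{p}\setminus\{0,\pm1\}$ gives $j(E^{\Leg}_{\lambda^{2}})=0$. Next, assume $p\equiv1\pmod3$, so $p\equiv1\pmod 6$ and $\F_{p}^{\times}$ contains the order-$6$ subgroup $\langle\mu_{+}\rangle$. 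Since $\mu_{+}$ has order $6$, $\mu_{+}^{(p-1)/2}=1$ iff $6\mid\frac{p-1}{2}$, i.e. iff $p\equiv1\pmod{12}$; when $p\equiv7\pmod{12}$ instead $\mu_{+}^{(p-1)/2}=\mu_{+}^{3}=-1$. As $\mu_{+}\mu_{-}=1$, the elements $\mu_{+}$ and $\mu_{-}$ lie in the same square class, so $\mu_{\pm}$ are both squares in $\F_{p}$ precisely when $p\equiv1\pmod{12}$ and both non-squares when $p\equiv7\pmod{12}$. In the latter case there is no $\lambda\in\F_{p}\setminus\{0,\pm1\}$ with $\lambda^{2}\in\{\mu_{+},\mu_{-}\}$, so $|L(\lambda)|=0$; this is part (3).

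It remains to establish the count $|L(\lambda)|=4$ of part (2). Suppose $p\equiv1\pmod{12}$, so $\mu_{\pm}$ are squares and some $\lambda$ with $\lambda^{2}=\mu_{+}$ exists (and $\lambda\ne0,\pm1$ since $\mu_{+}\ne0,1$). A short computation using $\mu_{+}+\mu_{-}=1=\mu_{+}\mu_{-}$ shows that the anharmonic orbit $\{\mu_{+},\mu_{+}^{-1},1-\mu_{+},(1-\mu_{+})^{-1},\mu_{+}(\mu_{+}-1)^{-1},(\mu_{+}-1)\mu_{+}^{-1}\}$ collapses to $\{\mu_{+},\mu_{-}\}$ (the $3$-cycle fixes $\mu_{+}$, the transpositions swap $\mu_{+}$ and $\mu_{-}$). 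Hence by Proposition \ref{elliptic-proposition-1}(5) the only $\nu$ with $j(E^{\Leg}_{\nu})=j(E^{\Leg}_{\lambda^{2}})$ are $\nu=\mu_{\pm}$, so the only candidates for $L(\lambda)$ are the four values $\pm\sqrt{\mu_{+}},\pm\sqrt{\mu_{-}}$, which are pairwise distinct (as $\mu_{+}\ne\mu_{-}$ for $p\ne3$) and none equal to $0,\pm1$. Finally, Proposition \ref{twists}(1) exhibits $E^{\Leg}_{\mu_{+}}$ as the $\mu_{+}$-quadratic twist of $E^{\Leg}_{1/\mu_{+}}=E^{\Leg}_{\mu_{-}}$; since $\mu_{+}$ is a square in $\F_{p}$ here, this twist is trivial over $\F_{p}$, so $E^{\Leg}_{\mu^{2}}\cong_{\F_{p}}E^{\Leg}_{\lambda^{2}}$ for all four of these $\mu$. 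Therefore $|L(\lambda)|=4$.

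I do not anticipate a genuine obstacle here; the content is careful bookkeeping. The one point to get right is to recognize $\mu_{\pm}$ as primitive sixth roots of unity: this single observation simultaneously controls when they lie in $\F_{p}$, when they are squares there, and — via Proposition \ref{twists}(1) — exactly when the twist linking $E^{\Leg}_{\mu_{+}}$ and $E^{\Leg}_{\mu_{-}}$ becomes trivial, and it is precisely the agreement of this last condition with the range $p\equiv1\pmod{12}$ where $\sqrt{\mu_{\pm}}$ exist that makes the count come out to exactly $4$ rather than something smaller.
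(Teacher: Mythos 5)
Your proof is correct. The paper itself does not argue this lemma at all: it simply cites Lemma 2.6 of \cite{KHN}, so your write-up is a genuinely self-contained replacement rather than a variant of the paper's argument. The substance of what you supply is exactly the right package: identifying $\mu_{\pm}=\frac{1\pm\sqrt{-3}}{2}$ as the primitive sixth roots of unity settles in one stroke (i) existence of the $j=0$ Legendre parameters (needs $p\equiv1\pmod 3$), (ii) their quadratic residuosity ($\mu_{+}^{(p-1)/2}=1$ iff $12\mid p-1$, and $\mu_{-}=\mu_{+}^{-1}$ lies in the same square class), and (iii) via Proposition \ref{twists}(1) the triviality over $\F_p$ of the twist relating $E^{\Leg}_{\mu_{+}}$ and $E^{\Leg}_{\mu_{-}}$ precisely when $p\equiv1\pmod{12}$. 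Your observation that the six-element anharmonic orbit collapses to $\{\mu_{+},\mu_{-}\}$ at $j=0$ is the correct replacement for the generic six-to-one count used in Lemmas \ref{final-count-1} and \ref{final-count-2}, and the resulting four candidates $\pm\sqrt{\mu_{+}},\pm\sqrt{\mu_{-}}$ are distinct and all realized, giving $|L(\lambda)|=4$. What your approach buys is independence from the external reference and an explicit explanation of why the count is $4$ rather than the generic $12$; what the citation buys the paper is brevity. No gaps.
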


\begin{proof}
  This follows directly from Lemma 2.6 of \cite{KHN}.  
\end{proof}

\begin{lemma}\label{final-count-4}
If $E_{\lambda^2}^{\Leg}/\F_p$ has $j(E_{\lambda^2}^{\Leg})=1728,$ then the following are true:\\
\noindent (1) If $p\equiv3\pmod{4},$ then $a_p(\lambda)=0.$\\

\noindent (2) If $p\equiv1\pmod{8},$ then $|L(\lambda)|=6.$\\

\noindent (3) If $p\equiv5\pmod{8},$ then $|L(\lambda)|=2.$
\end{lemma}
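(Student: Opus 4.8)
The plan is to reduce the whole statement to Proposition \ref{elliptic-proposition-1}(3), which tells us that $j(E^{\Leg}_\mu)=1728$ holds for exactly $\mu\in\{2,-1,1/2\}$. Hence the hypothesis $j(E_{\lambda^2}^{\Leg})=1728$ forces $\lambda^2\in\{2,-1,1/2\}$, and --- crucially --- if $E_{\mu^2}^{\Leg}\cong_{\F_p}E_{\lambda^2}^{\Leg}$ then the two curves are a fortiori isomorphic over $\overline{\F_p}$, so $j(E_{\mu^2}^{\Leg})=1728$ and thus $\mu^2\in\{2,-1,1/2\}$ as well. Consequently $L(\lambda)$ is always contained in the set of square roots in $\F_p$ of the three numbers $2,-1,1/2$, and computing $|L(\lambda)|$ comes down to: (a) deciding which of $2,-1,1/2$ are squares in $\F_p$ --- equivalently, reading off $p\bmod 8$, since $-1$ is a square iff $p\equiv1\pmod{4}$ while $2$ and $1/2$ are squares iff $p\equiv\pm1\pmod{8}$; and (b) deciding which of the Legendre curves attached to those values are isomorphic to $E_{\lambda^2}^{\Leg}$ over $\F_p$ (not merely over $\overline{\F_p}$), for which I would use the explicit quadratic twist identities in Proposition \ref{twists}.

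For part (1), if $p\equiv3\pmod{4}$ then among $2,-1,1/2$ only $2$ and $1/2$ can be squares, which moreover forces $p\equiv7\pmod{8}$ (and if $p\equiv3\pmod{8}$ there is no such $\lambda$, so the assertion is vacuous). In any case $E_{\lambda^2}^{\Leg}$ has $j$-invariant $1728$, hence is supersingular over $\F_p$: after translating the cubic to the form $y^2=x^3+cx$ with $c\in\F_p^{\times}$, the reindexing $x\mapsto-x$ (together with $\phi(-1)=-1$) shows that $\sum_{x\in\F_p}\phi(x)\,\phi(x^2+c)$ equals its own negative, so $a_p(\lambda)=-\sum_{x\in\F_p}\phi(x)\,\phi(x^2+c)=0$. (Equivalently one invokes the classical fact that $j=1728$ is supersingular precisely when $p\equiv3\pmod 4$, together with $|a_p(\lambda)|\le 2\sqrt p<p$.)

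For part (2), if $p\equiv1\pmod{8}$ then $-1$, $2$, $1/2$ are all squares in $\F_p$ and are pairwise distinct (as $p>3$), so their square roots form a set of six distinct elements $\pm\sqrt{2},\ \pm\sqrt{1/2},\ \pm\sqrt{-1}$, none equal to $0$ or $\pm1$. It remains to show that $E_{2}^{\Leg},E_{-1}^{\Leg},E_{1/2}^{\Leg}$ all lie in a single $\F_p$-isomorphism class: Proposition \ref{twists}(1) with $\lambda=2$ identifies $E_{2}^{\Leg}$ as the $2$-quadratic twist of $E_{1/2}^{\Leg}$, and $2$ is a square, so $E_{2}^{\Leg}\cong_{\F_p}E_{1/2}^{\Leg}$; Proposition \ref{twists}(2) with $\lambda=2$ identifies $E_{2}^{\Leg}$ as the $(-1)$-quadratic twist of $E_{1-2}^{\Leg}=E_{-1}^{\Leg}$, and $-1$ is a square, so $E_{2}^{\Leg}\cong_{\F_p}E_{-1}^{\Leg}$. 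Since $\lambda^2$ is one of $2,-1,1/2$, all six elements above belong to $L(\lambda)$, giving $|L(\lambda)|=6$. For part (3), if $p\equiv5\pmod{8}$ then $-1$ is a square but $2$ (hence $1/2$) is not, so the only element of $\{2,-1,1/2\}$ that can be a square is $-1$; thus $\lambda^2=-1$ and, for any $\mu\in L(\lambda)$, $\mu^2=-1$ as well. Hence $E_{\lambda^2}^{\Leg}=E_{-1}^{\Leg}$ and $L(\lambda)=\{\mu\in\F_p\setminus\{0,\pm1\}:\mu^2=-1\}$, the set of the two square roots of $-1$, so $|L(\lambda)|=2$.

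The one point requiring genuine care --- and the reason this does not follow from the count in Lemma \ref{final-count-1} --- is that at $j=1728$ the curve acquires an automorphism of order $4$, so the ``six-to-one'' behaviour of Proposition \ref{elliptic-proposition-1}(5) degenerates: the $S_3$-orbit of the parameter $2$ is only $\{2,-1,1/2\}$. Thus one must recount by hand, tracking which of $2,-1,1/2$ are squares modulo $p$ and which of the associated curves are $\F_p$-isomorphic via Proposition \ref{twists}, exactly as above; no essentially new obstacle appears beyond this bookkeeping and the supersingularity input in case (1).
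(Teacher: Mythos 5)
Your proof is correct, but it is genuinely different from what the paper does: the paper's entire proof of this lemma is a one-line citation to Lemma 2.5 of \cite{KHN}, whereas you supply a self-contained argument from the ingredients already present in Section 2. Your route --- pinning down $\lambda^2\in\{2,-1,1/2\}$ via Proposition \ref{elliptic-proposition-1}(3), observing that any $\mu\in L(\lambda)$ must satisfy $\mu^2\in\{2,-1,1/2\}$ as well since $\F_p$-isomorphism preserves $j$, and then sorting the cases by which of $2,-1,1/2$ are squares (i.e.\ by $p\bmod 8$) and gluing the three curves $E^{\Leg}_2,E^{\Leg}_{-1},E^{\Leg}_{1/2}$ into one $\F_p$-isomorphism class via the explicit twists of Proposition \ref{twists} --- is sound; the distinctness of the six square roots (using $p>3$), the vacuity of case (1) when $p\equiv3\pmod 8$, and the supersingularity computation $a_p=-\sum_x\phi(x)\phi(x^2+c)=0$ via $x\mapsto -x$ when $\phi(-1)=-1$ all check out. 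You are also right to flag that the six-to-one count of Proposition \ref{elliptic-proposition-1}(5) degenerates to the three-element orbit $\{2,-1,1/2\}$ at $j=1728$, which is exactly why Lemma \ref{final-count-1} does not apply and a separate count is needed. What your version buys is independence from the external reference and a transparent accounting of where the values $6$ and $2$ come from; what the citation buys the paper is brevity. The only cosmetic mismatch is that the statement writes $a_p(\lambda)$ where the curve under discussion is $E^{\Leg}_{\lambda^2}$; you sensibly prove that the trace of Frobenius of $E^{\Leg}_{\lambda^2}$ vanishes, which is clearly the intended assertion.
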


\begin{proof}
  This follows directly from Lemma 2.5 of \cite{KHN}.  
\end{proof}

	Now, we discuss explicit formulas for counting the number of isomorphism classes of elliptic curves over $\F_q.$  This count is given in terms of Hurwitz class numbers, as established in a theorem due to Schoof \cite{schoof}. 	
	
	\begin{theorem}[Section 4 of \cite{schoof}]\label{Schoof} If $p\geq 5$ is prime, and  $q=p^r,$ then the following are true.

		\noindent
		(1) If $n\geq 2$ and  $s$ is a nonzero integer for which  $p|s$ and $s^2\neq 4q,$  then there are no elliptic curves $E/\F_q$
		with $|E(\F_q)|=q+1-s$ and 
		$\Z/ n\Z \times \Z/ n\Z \subseteq E(\F_q).$
		
		\noindent
		(2)  If $r$ is even and $s=\pm 2p^{r/2},$ then the number of isomorphism classes of elliptic curves over $\F_q$ with
		$\Z/ n\Z \times \Z/ n\Z \subseteq E(\F_q)$ and
		$|E(\F_q)|=q+1-s$ is
		\begin{equation}\label{Ap}
			\frac{1}{12}\left(p+6-4\leg{-3}{p}-3\leg{-4}{p}\right),
		\end{equation}
		where $\leg{\cdot}{p}$ is the Legendre symbol.
		
		\noindent 
		(3) If $r$ is even, $s=\pm p^{r/2}$ and $p\not\equiv1\pmod{3},$ then the number of isomorphism classes of elliptic curves over $\F_q$ with
		$ \Z /n\Z \subseteq E(\F_q)$ and
		$|E(\F_q)|=q+1-s$ is
		\begin{equation}\label{Ap-2}
			1-\leg{-3}{p}.
		\end{equation}
		
		\noindent
		(4)  Suppose that $n$ and $s$ are integers such that $s^2\leq 4q,$ $p\nmid s,$ $n^2\mid (q+1-s),$ and $n\mid (q-1).$ Then the number of isomorphism classes of elliptic curves over $\F_q$ with
		$|E(\F_q)|=q+1-s$ and $\Z/ n\Z \times \Z/ n\Z\subseteq E(\F_q)$ is $H\left(\frac{4q-s^2}{n^2}\right).$

	\end{theorem}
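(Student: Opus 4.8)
The plan is to obtain all four parts from Deuring's description of elliptic curves over finite fields via their Frobenius endomorphism, together with the constraint the Weil pairing places on rational torsion. Throughout, write $\pi$ for the Frobenius endomorphism of a curve $E/\F_q$ with $|E(\F_q)|=q+1-s$, so that $\pi$ satisfies $X^2-sX+q=0$ and the endomorphism $\pi-1$ has trace $s-2$, norm $|E(\F_q)|=q+1-s$, and discriminant $s^2-4q$.

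\emph{Part (4), the ordinary case.} Since $p\nmid s$ the curve is ordinary, so $\mathrm{End}_{\F_q}(E)$ is an order $\mathcal{O}'$ in the imaginary quadratic field $K=\Q(\pi)$ with $\Z[\pi]\subseteq\mathcal{O}'\subseteq\mathcal{O}_{K}$; by Deuring's lifting and reduction theorems every such order occurs, and for a fixed $\mathcal{O}'$ the set of $\F_q$-isomorphism classes with $\mathrm{End}_{\F_q}(E)=\mathcal{O}'$ is a torsor under $\mathrm{Pic}(\mathcal{O}')$, the count being $h(\mathcal{O}')$ once the extra automorphisms at $j=0,1728$ are absorbed into the $1/\Omega$ weighting. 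Summing over all admissible $\mathcal{O}'$ gives the Hurwitz class number of the discriminant $\mathrm{disc}(\Z[\pi])=s^2-4q$, namely $H(4q-s^2)$; this is the statement for $n=1$. To impose $\Z/n\Z\times\Z/n\Z\subseteq E(\F_q)$, observe that this is equivalent to $\pi$ acting as the identity on $E[n]$, i.e. to $\alpha:=\tfrac{\pi-1}{n}\in\mathrm{End}_{\F_q}(E)=\mathcal{O}'$. From $n^2\mid q+1-s$ and $n\mid q-1$ (the latter forced by the Weil pairing, as $E[n]\subseteq E(\F_q)$ implies $\mu_n\subseteq\F_q$) one checks $n\mid s-2$, so $\alpha$ is an algebraic integer generating the order $\Z[\alpha]$ of discriminant $\tfrac{s^2-4q}{n^2}$, and $\alpha\in\mathcal{O}'$ is equivalent to the lattice inclusion $\Z[\alpha]\subseteq\mathcal{O}'$. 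Hence the curves counted are exactly those whose endomorphism order lies between $\Z[\alpha]$ and $\mathcal{O}_{K}$, and the count is $H\!\left(\tfrac{4q-s^2}{n^2}\right)$.

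\emph{Parts (1)--(3), the supersingular and degenerate cases.} When $p\mid s$ and $s^2\neq4q$ the curve is supersingular with $\pi\notin\Z$ and non-commutative $\mathrm{End}_{\F_q}(E)$; here the same divisibility juggling shows that $\Z/n\Z\times\Z/n\Z\subseteq E(\F_q)$ with $n\ge2$ would force incompatible congruences on $s$, given the admissible supersingular traces ($s=0$ for $r$ odd, $s\in\{0,\pm p^{r/2},\pm2p^{r/2}\}$ for $r$ even) and the Hasse bound; this proves part (1). The excluded case $s=\pm2p^{r/2}$ has $\pi=\pm p^{r/2}\in\Z$, so all geometric endomorphisms are already $\F_q$-rational and $E[n]$ is automatically rational whenever $n\mid q-1$; counting these classes then reduces to Eichler's mass formula for maximal orders in the definite quaternion algebra ramified exactly at $p$ and $\infty$, whose evaluation yields the polynomial in \eqref{Ap}. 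The case $s=\pm p^{r/2}$ with $p\not\equiv1\pmod3$ is handled similarly via the class number of the relevant quaternion order, giving \eqref{Ap-2}.

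The main obstacle is the bookkeeping inside the Deuring correspondence: matching the honest count of $\F_q$-isomorphism classes with the Hurwitz class number requires carefully tracking the automorphism corrections at $j=0$ and $j=1728$ that are packaged into $\Omega$, and one must verify that the torsion condition $\tfrac{\pi-1}{n}\in\mathcal{O}'$ really is equivalent to $\Z[\tfrac{\pi-1}{n}]\subseteq\mathcal{O}'$ rather than a finer index condition. For parts (2)--(3) the work shifts to evaluating class numbers of quaternion orders explicitly, which is where the Legendre-symbol terms originate.
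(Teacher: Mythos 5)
The paper does not prove this statement at all: it is imported verbatim from Section 4 of Schoof's paper \cite{schoof} and used as a black box, so there is no internal proof to compare against. Your sketch is, in substance, Schoof's own argument: the Deuring correspondence identifies ordinary isomorphism classes with Frobenius trace $s$ as a disjoint union of torsors under $\mathrm{Pic}(\mathcal{O}')$ over the orders $\Z[\pi]\subseteq\mathcal{O}'\subseteq\mathcal{O}_K$; the key lemma that $\Z/n\Z\times\Z/n\Z\subseteq E(\F_q)$ is equivalent to $\frac{\pi-1}{n}\in\mathrm{End}_{\F_q}(E)$ then replaces $\Z[\pi]$ by the order $\Z\bigl[\frac{\pi-1}{n}\bigr]$ of discriminant $\frac{s^2-4q}{n^2}$, giving part (4); and the degenerate traces $s=\pm p^{r/2},\pm 2p^{r/2}$ are disposed of by the divisibility argument ($n\mid s-2$ and $n^2\mid s^2-4q$ with $\gcd(n,p)=1$) and by the Eichler--Deuring class number formula for the quaternion algebra ramified at $p$ and $\infty$, which is exactly where the expression \eqref{Ap} comes from. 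Two small points are worth tightening. First, your aside about absorbing the extra automorphisms at $j=0,1728$ into a $1/\Omega$ weighting is a red herring for part (4) as stated: the theorem counts honest isomorphism classes and the answer is the \emph{unweighted} Kronecker--Hurwitz number $H$, the torsor structure giving exactly $h(\mathcal{O}')$ classes per order with no correction; the weighted $H^*$ only enters later in the paper when classes are counted with mass $1/|\mathrm{Aut}|$. Second, the worry you raise about $\frac{\pi-1}{n}\in\mathcal{O}'$ versus $\Z\bigl[\frac{\pi-1}{n}\bigr]\subseteq\mathcal{O}'$ is vacuous, since $\mathcal{O}'$ is a ring containing $\Z$. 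With those remarks, your outline matches the source's proof and is not a new route.
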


\section{Moments of Twisted Kloosterman sums and class numbers}

\noindent Let $\widehat{\mathbb{F}_p^\times}$ be the group of all multiplicative characters of $\mathbb{F}_p^{\times}$. Let $\overline{\chi}$ denote the inverse of a multiplicative character $\chi$ of $\F_p$ (with the convention that $\chi(0)=0).$ 

\begin{lemma}\emph{(\cite[Chapter 8]{ireland}).}\label{lemma-3} For a prime $p,$ we have
\begin{enumerate}
\item $\displaystyle\sum\limits_{x\in\mathbb{F}_p}\chi(x)=\left\{
                                  \begin{array}{ll}
                                    p-1 & \hbox{if~ $\chi=\varepsilon$;} \\
                                    0 & \hbox{if ~~$\chi\neq\varepsilon$.}
                                  \end{array}
                                \right.$
\item $\displaystyle\sum\limits_{\chi\in \widehat{\mathbb{F}_p^\times}}\chi(x)~~=\left\{
                            \begin{array}{ll}
                              p-1 & \hbox{if~~ $x=1$;} \\
                              0 & \hbox{if ~~$x\neq1$.}
                            \end{array}
                          \right.$
\end{enumerate}
\end{lemma}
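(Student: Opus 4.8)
The plan is to prove both orthogonality relations by the same standard device: exploiting that multiplying by a fixed element permutes the relevant index set, so the sum is fixed by a nontrivial scalar and must therefore vanish. I would treat the two parts in parallel.

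For part (1), I would first use the convention $\chi(0)=0$ to reduce $\sum_{x\in\mathbb{F}_p}\chi(x)$ to $\sum_{x\in\mathbb{F}_p^\times}\chi(x)$. If $\chi=\varepsilon$, then $\chi(x)=1$ for every $x\in\mathbb{F}_p^\times$, so the sum equals $|\mathbb{F}_p^\times|=p-1$. If $\chi\neq\varepsilon$, then by definition there is some $y\in\mathbb{F}_p^\times$ with $\chi(y)\neq1$; since $x\mapsto yx$ is a bijection of $\mathbb{F}_p^\times$, one obtains $\sum_{x}\chi(x)=\sum_{x}\chi(yx)=\chi(y)\sum_{x}\chi(x)$, hence $(\chi(y)-1)\sum_{x}\chi(x)=0$ and the sum is $0$.

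For part (2), I would first recall that $\mathbb{F}_p^\times$ is cyclic of order $p-1$, so its dual group $\widehat{\mathbb{F}_p^\times}$ is also cyclic of order $p-1$, and for any $x\in\mathbb{F}_p^\times$ with $x\neq1$ there exists $\psi\in\widehat{\mathbb{F}_p^\times}$ with $\psi(x)\neq1$ (write $x=g^k$ for a generator $g$ of $\mathbb{F}_p^\times$ and $0<k<p-1$, and take $\psi$ to send $g$ to a primitive $(p-1)$-th root of unity). If $x=1$, then $\chi(1)=1$ for all $\chi$ and the sum over the $p-1$ characters is $p-1$; the case $x=0$ is immediate since $\chi(0)=0$ for every $\chi$. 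If $x\in\mathbb{F}_p^\times$ with $x\neq1$, then $\chi\mapsto\psi\chi$ permutes $\widehat{\mathbb{F}_p^\times}$, so $\sum_{\chi}\chi(x)=\sum_{\chi}(\psi\chi)(x)=\psi(x)\sum_{\chi}\chi(x)$, forcing $\sum_{\chi}\chi(x)=0$.

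There is no substantial obstacle here; the only points needing a little care are the bookkeeping around the convention $\chi(0)=0$ and the existence of a separating character, both of which reduce to the cyclicity of $\mathbb{F}_p^\times$. Alternatively, one may simply invoke \cite[Chapter 8]{ireland}, where this statement is standard.
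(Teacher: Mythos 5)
Your proof is correct and is exactly the standard orthogonality argument that the paper defers to \cite[Chapter 8]{ireland} without reproducing; the translation trick for part (1) and the separating-character argument for part (2), together with the bookkeeping for $x=0$ under the convention $\chi(0)=0$, are all handled properly. Nothing further is needed.
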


\begin{lemma}
    For an odd prime $p>3$ and $a\in\F_p\setminus\{0\}$ we have
    \begin{align}\label{final-lemma-1.1}
     \sum\limits_{y\in\F_p}\phi(y^2+ay) =   \sum\limits_{y\in\F_p}\phi(y^2+a)=-1 
      \end{align}
    
\end{lemma}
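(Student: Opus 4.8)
The plan is to reduce both sums to the same elementary character-sum computation. For the first sum, observe that since $\phi$ is multiplicative and $\phi(0)=0$, we have $\phi(y^2+ay)=\phi(y(y+a))=\phi(y)\phi(y+a)$ for every $y\in\F_p$ (the terms $y=0$ and $y=-a$ contribute $0$ on both sides). Hence
$$
\sum_{y\in\F_p}\phi(y^2+ay)=\sum_{y\in\F_p^\times}\phi(y)\phi(y+a)=\sum_{y\in\F_p^\times}\phi(y^2)\,\phi(1+a\overline{y})=\sum_{y\in\F_p^\times}\phi(1+a\overline{y}),
$$
using $\phi(y^2)=1$ for $y\neq0$. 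As $y$ runs over $\F_p^\times$, the quantity $a\overline{y}$ runs over $\F_p^\times$ (since $a\neq0$), so $1+a\overline{y}$ runs over $\F_p\setminus\{1\}$. Therefore the last sum equals $\sum_{w\in\F_p\setminus\{1\}}\phi(w)=\bigl(\sum_{w\in\F_p}\phi(w)\bigr)-\phi(1)=0-1=-1$ by Lemma \ref{lemma-3}(1).

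For the second sum, I would use the standard fact that for $t\in\F_p$ the number of solutions of $x^2=t$ equals $1+\phi(t)$ (valid with the convention $\phi(0)=0$). Writing $t=y^2$ and summing,
$$
\sum_{y\in\F_p}\phi(y^2+a)=\sum_{t\in\F_p}(1+\phi(t))\,\phi(t+a)=\sum_{t\in\F_p}\phi(t+a)+\sum_{t\in\F_p}\phi(t)\phi(t+a).
$$
The first sum on the right is $\sum_{s\in\F_p}\phi(s)=0$ by Lemma \ref{lemma-3}(1) (shift the index by $a\in\F_p$), and the second sum is precisely the sum evaluated in the previous paragraph (the term $t=0$ vanishes), hence equals $-1$. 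Combining gives the claimed value $-1$.

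There is no substantial obstacle here; the only points requiring a little care are the bookkeeping of the $\phi(0)=0$ convention in the identities $\phi(y(y+a))=\phi(y)\phi(y+a)$ and $1+\phi(t)=\#\{x:x^2=t\}$, and the observation that $y\mapsto a\overline{y}$ is a bijection of $\F_p^\times$, which is where the hypotheses $a\neq0$ and $p$ odd enter. Alternatively, one could deduce the first sum from the general quadratic evaluation $\sum_{y\in\F_p}\phi(\alpha y^2+\beta y+\gamma)=-\phi(\alpha)$ (valid when $\alpha\neq0$ and $\beta^2-4\alpha\gamma\neq0$), applied with $(\alpha,\beta,\gamma)=(1,a,0)$ after the substitution $y\mapsto y-a\overline{2}$, and the second from the same formula with $(\alpha,\beta,\gamma)=(1,0,a)$; but the direct argument above avoids invoking that result.
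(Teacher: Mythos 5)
Your proof is correct. For the first identity the paper simply declares the computation ``routine,'' so your explicit argument (factor $y^2+ay=y(y+a)$, pull out $\phi(y^2)=1$, and observe that $1+a\overline{y}$ sweeps out $\F_p\setminus\{1\}$) fills in exactly what was left to the reader. For the second identity your route genuinely differs from the paper's: you substitute $t=y^2$ with multiplicity $1+\phi(t)$ and thereby reduce $\sum_y\phi(y^2+a)$ to $\sum_t\phi(t)\phi(t+a)$, i.e.\ to the first sum, plus a vanishing complete character sum. The paper instead counts the solutions of $x^2-y^2\equiv a\pmod p$ in two ways: once as $\sum_y\bigl(1+\phi(y^2+a)\bigr)$ and once via the linear change of variables $z=x+y$, $w=x-y$, which turns the conic into $zw=a$ with exactly $p-1$ solutions. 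Both arguments rest on the same underlying fact that $\#\{x:x^2=t\}=1+\phi(t)$; the difference is only in how the resulting quantity is evaluated (your reduction to the first sum versus the paper's rational parametrization of the hyperbola). Your version has the small structural advantage of making the two identities visibly equivalent rather than proving them by unrelated means; the paper's point count is perhaps more self-contained. Either way, you correctly track where the hypotheses $a\neq0$ and $\phi(0)=0$ are used, so there is nothing to fix.
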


 \begin{proof}
 It is routine to verify that $\sum\limits_{y\in\F_p}\phi(y^2+ay) =-1.$ Now, counting the number of solutions of $x^2-y^2\equiv a\pmod{p}$ we aim to show that 
     $$\sum\limits_{y\in\F_p}\phi(y^2+a)=-1.$$     
\noindent To proceed, let $N_p(a)$ denote the number of solution to the congruence $x^2-y^2\equiv a\pmod{p}.$ We can express $N_p(a)$ as $$N_p(a)=\sum\limits_{y\in\F_p}(1+\phi(y^2+a)).$$ On the other hand, by making the substituting $x+y=z$ and $x-y=w$ we can rewtite the congruence as  $$x^2-y^2=(x+y)(x-y)\equiv zw \equiv a\pmod{p}.$$ Since for each nonzero solution $zw \equiv a\pmod{p}$ there are 
$p-1$ such pairs that satisfy that equation, so we conclude that $N_p(a)= p-1.$ Combining both the expressions of $N_p(a)$ we obtain the identity: $\sum\limits_{y\in\F_p}(1+\phi(y^2+a))= p-1.$     
     This completes the proof.
     
      \end{proof}
We now discuss some connecting results that will be crucial in proving the main theorems. The following proposition expresses the sum $S(m,\phi)_p$ as a symmetric sum involving quadratic characters.

\begin{proposition}\cite[Proposition 3.1]{DGP}\label{proposition-1} For an integer $m\geq1,$ we have
\begin{align*}
S(m+1,\phi)_p=p\phi(-1)\sum_{x_1,\ldots,x_m\in\mathbb{F}_p^{\times}}\phi(x_1+x_2+\cdots+x_m+1)\phi(\overline{x_1}+\overline{x_2}+
\cdots+\overline{x_m}+1)
\end{align*}
\end{proposition}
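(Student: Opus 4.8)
The plan is to start from the definition $S(m+1,\phi)_p=\sum_{a}\phi(a)K(a,p)^{m+1}$ and expand the $(m+1)$-st power of the Kloosterman sum directly. Writing $K(a,p)^{m+1}=\sum_{x_0,x_1,\dots,x_m\in\F_p^\times}\theta\big(x_0+x_1+\cdots+x_m+a(\overline{x_0}+\overline{x_1}+\cdots+\overline{x_m})\big)$, I would interchange the order of summation so that the sum over $a\in\F_p$ is performed first. Since $\phi$ is a nontrivial multiplicative character (quadratic), $\sum_{a\in\F_p}\phi(a)\theta(a t)$ is a Gauss sum twisted by $t$: for $t\neq 0$ it equals $\overline{\phi(t)}g(\phi)=\phi(t)g(\phi)$ (using $\phi=\overline\phi$), and for $t=0$ it vanishes because $\phi$ is nontrivial. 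Applying this with $t=\overline{x_0}+\overline{x_1}+\cdots+\overline{x_m}$ collapses the $a$-sum and produces a factor $g(\phi)\,\phi(\overline{x_0}+\overline{x_1}+\cdots+\overline{x_m})$, valid on the locus where that sum of inverses is nonzero (the vanishing locus contributes nothing).

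Next I would use a change of variables to symmetrize. Replace each $x_i$ by $x_0 y_i$ for $i=1,\dots,m$, i.e. factor out $x_0$. Under this substitution $x_0+x_1+\cdots+x_m = x_0(1+y_1+\cdots+y_m)$ and $\overline{x_0}+\cdots+\overline{x_m}=\overline{x_0}(1+\overline{y_1}+\cdots+\overline{y_m})$, so the character value becomes $\phi(\overline{x_0})\,\phi(1+\overline{y_1}+\cdots+\overline{y_m})=\phi(x_0)\phi(1+\overline{y_1}+\cdots+\overline{y_m})$, and the additive-character argument becomes $x_0(1+y_1+\cdots+y_m)+\overline{x_0}(1+\overline{y_1}+\cdots+\overline{y_m})$. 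Now sum over $x_0\in\F_p^\times$ with the weight $\phi(x_0)\theta\big(x_0 u+\overline{x_0}v\big)$ where $u=1+y_1+\cdots+y_m$ and $v=1+\overline{y_1}+\cdots+\overline{y_m}$; this is precisely a twisted Kloosterman-type sum, and a standard evaluation (substitute $x_0\mapsto x_0$ and complete, or recognize it as $\phi(u)g(\phi)$ times a delta-type contribution after a further substitution $x_0\mapsto \overline{u}x_0$ when $u\neq 0$) gives $g(\phi)\,\phi(u)\cdot[\text{something}]$. The cleanest route: after $x_0\mapsto \overline{u}x_0$ (on $u\neq 0$) the argument is $x_0+\overline{x_0}\,uv$, and $\sum_{x_0}\phi(\overline u x_0)\theta(x_0+\overline{x_0}uv)=\phi(u)K(uv,p)$-type — but to land on the stated clean bilinear form one instead sums over $x_0$ to get a second Gauss sum, using that $\sum_{x_0\in\F_p^\times}\phi(x_0)\theta(x_0 u)=\phi(u)g(\phi)$ once the $\overline{x_0}v$ term is absorbed. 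Carrying the two Gauss-sum evaluations through yields $g(\phi)^2=\phi(-1)p$ times $\sum_{y_1,\dots,y_m}\phi(1+y_1+\cdots+y_m)\phi(1+\overline{y_1}+\cdots+\overline{y_m})$, which is the claimed identity after relabeling the $y_i$ as $x_i$.

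The main obstacle I anticipate is bookkeeping the two successive character-sum evaluations cleanly so that exactly one factor of $g(\phi)^2$ survives and the residual sum is the symmetric bilinear expression with no leftover variable. In particular one must be careful that the $a$-sum really does force $\overline{x_0}+\cdots+\overline{x_m}\neq 0$ and that, after factoring out $x_0$, the remaining $x_0$-sum is a genuine Gauss sum (not a Kloosterman sum) — this requires the $\overline{x_0}$-term to have been eliminated, which is arranged by an appropriate substitution; equivalently, one may evaluate the $x_0$-sum as a Kloosterman sum $K(uv,p)$ twisted by $\phi(u)$ and then recognize, via Proposition \ref{proposition-1} applied with $m$ replaced by a smaller index or via the evaluation $\sum_w \phi(w)K(w,p)=g(\phi)^2$, that everything telescopes. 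I would organize the argument so the substitution is made up front, making the two Gauss-sum evaluations manifestly parallel, and then the edge-case analysis (zero loci contributing nothing because $\phi$ is nontrivial) is routine via Lemma \ref{lemma-3}(1).
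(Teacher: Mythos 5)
The paper offers no proof of this proposition at all --- it is quoted verbatim from \cite[Proposition 3.1]{DGP} --- so there is nothing in-paper to compare against; your proposal is the standard argument one would expect to find in that reference, and its skeleton is correct: expand $K(a,p)^{m+1}$, perform the $a$-sum first to get one Gauss sum, dehomogenize by $x_i=x_0y_i$, perform the $x_0$-sum to get a second Gauss sum, and use $g(\phi)^2=\phi(-1)p$.

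The one concrete misstep is in the middle, where you assert that after the substitution ``the additive-character argument becomes $x_0(1+y_1+\cdots+y_m)+\overline{x_0}(1+\overline{y_1}+\cdots+\overline{y_m})$.'' It does not. Once the $a$-sum has been evaluated, the quantity $\overline{x_0}+\cdots+\overline{x_m}$ no longer appears inside $\theta$ at all; it survives only inside the multiplicative character, as the factor $\phi(\overline{x_0}+\cdots+\overline{x_m})=\phi(x_0)\,\phi(1+\overline{y_1}+\cdots+\overline{y_m})$ that you have already (correctly) extracted. The exponential that remains is just $\theta(x_0u)$ with $u=1+y_1+\cdots+y_m$, so the $x_0$-sum is $\sum_{x_0\in\F_p^\times}\phi(x_0)\theta(x_0u)=\phi(u)g(\phi)$ from the outset --- a genuine Gauss sum, with no Salié/twisted-Kloosterman detour, no further substitution, and nothing to ``absorb.'' By writing $x_0u+\overline{x_0}v$ you are counting the inverse-sum twice, once additively and once multiplicatively, which is why you then find yourself hedging between two incompatible evaluations of the $x_0$-sum. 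Delete that term and the argument closes cleanly; the degenerate loci ($u=0$ or $v=0$) need no separate discussion because the convention $\phi(0)=0$ kills them on both sides.
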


We now prove a proposition that will be the key to relate $S(4,\phi)_p$ and the second moment of the trace of Frobenius of Legendre elliptic curves whose groups contain $\Z/2\Z\times\Z/4\Z.$ To proceed, we first establish a relation between $S(4,\phi)_p$  and the number of $\F_p$-rational points of a resolution of certain Calabi-Yau threefold given by  
\begin{align}\label{new-eq-1}
 X: x+\overline{x}+y+\overline{y}+z+\overline{z}+u+\overline{u}=0.
\end{align} 
It is important to note that van Geemen-Nygaard \cite{Geemen} and Ahlgren-Ono \cite{ahl-ono-1} independently proved that this Calabi-Yau threefold is modular. To obtain the next proposition, we employ a series of transformations that are also used in \cite{ahl-ono-1}. 

\begin{proposition}\label{proposition-hypergeometric}
For an odd prime $p>3$ we have

\begin{align}
S(4,\phi)_p=-p^3+2p^2+p\sum\limits_{\gamma\in\mathbb{F}_p\setminus
\{0,\pm1\}}a_p(\gamma^2)^2.\notag
\end{align}

\end{proposition}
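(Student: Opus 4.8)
The plan is to start from the symmetric-sum expression for $S(4,\phi)_p$ provided by Proposition \ref{proposition-1} with $m=3$, namely
\[
S(4,\phi)_p = p\phi(-1)\sum_{x_1,x_2,x_3\in\F_p^\times}\phi(x_1+x_2+x_3+1)\,\phi(\overline{x_1}+\overline{x_2}+\overline{x_3}+1),
\]
and to transform this into a point count on the variety $X$ of \eqref{new-eq-1}. First I would homogenize: after clearing denominators, the condition $x_1+x_2+x_3+1 = $ (something a square times) and $\overline{x_1}+\overline{x_2}+\overline{x_3}+1 = $ (a square times) can be detected by introducing auxiliary variables $v,w$ with $v^2 = x_1+x_2+x_3+1$ and $w^2 = \overline{x_1}+\overline{x_2}+\overline{x_3}+1$; summing $\phi$ against these quadratic conditions is precisely the standard device $\sum_{t}\phi(t) = (\#\{s:s^2=t\}) - 1$, using \eqref{final-lemma-1.1}-type orthogonality to absorb the error terms. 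Carrying this out, and substituting $x_i \mapsto x_i$ while tracking one more change of variables that symmetrizes the roles of $x_i$ and $\overline{x_i}$ (the same sequence of substitutions used by Ahlgren--Ono in \cite{ahl-ono-1}), converts the double character sum into $c_1 \cdot |X(\F_p)| + (\text{lower order polynomial in } p)$ for an explicit constant, where $X$ is the octic Calabi--Yau threefold $x+\overline{x}+y+\overline{y}+z+\overline{z}+u+\overline{u}=0$.

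Next I would invoke the (known) connection between $|X(\F_p)|$ and Legendre curves. One slices $X$ by the pencil parameter: writing the equation as $(x+\overline x) + (y+\overline y) = -\big((z+\overline z)+(u+\overline u)\big)$ and setting each hyperbola-pair $x+\overline x = s$, $y + \overline y = t$, etc., one sees that the fibers are products/quotients of conics, and the point count reduces to a sum over $\gamma$ of the number of points on $E^{\Leg}_{\gamma^2}$ — this is exactly the mechanism by which \cite{Geemen} and \cite{ahl-ono-1} prove modularity of $X$ via the connection with $\Z/2\Z\times\Z/4\Z$-structures (cf. Proposition \ref{isomorphism-1}). Concretely, I expect to land on an identity of the shape
\[
|X(\F_p)| = (\text{poly in } p) + (\text{const})\cdot \sum_{\gamma\in\F_p\setminus\{0,\pm1\}}\big(p+1-|E^{\Leg}_{\gamma^2}(\F_p)|\big)^2,
\]
since the square arises from the two independent "hyperbola" directions each contributing a copy of the trace of Frobenius $a_p(\gamma^2) = p+1-|E^{\Leg}_{\gamma^2}(\F_p)|$. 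Combining this with the previous paragraph and collecting the polynomial-in-$p$ contributions (these are elementary Gauss/Jacobi-sum evaluations, the $\sum_y\phi(y^2+a)=-1$ type identities, and counts of degenerate loci where some $x_i$ or the relevant squares vanish) yields $S(4,\phi)_p = -p^3 + 2p^2 + p\sum_{\gamma}a_p(\gamma^2)^2$ after verifying the constant is $1$.

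The main obstacle is the bookkeeping of the lower-order terms and the degenerate strata: one must carefully separate, in each change of variables, the contributions from $x_i=0$, from the vanishing of $x_1+x_2+x_3+1$ or its inverse-analogue, and from $\gamma \in \{0,\pm1\}$ (where $E^{\Leg}_{\gamma^2}$ degenerates), and show that all of these collapse exactly to $-p^3+2p^2$. A secondary subtlety is ensuring the chain of substitutions is genuinely bijective on $\F_p^\times$ (or correcting by explicit boundary counts where it is not), so that no spurious factor of $p-1$ or $\phi(-1)$ is lost; here the hypotheses $p>3$ and the oddness of $p$ are used to guarantee $2$ and the relevant discriminants are invertible. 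I would handle this by mirroring the transformation sequence of \cite{ahl-ono-1} step by step, at each stage recording the exact polynomial correction, and then simply summing the corrections at the end.
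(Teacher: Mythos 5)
Your plan is essentially the paper's own proof: apply Proposition \ref{proposition-1} with $m=3$, double-count the Calabi--Yau threefold $X$ of \eqref{new-eq-1} (once via the quadratic-character detection of $u^2(1+x+y+z)=-(1+\overline{x}+\overline{y}+\overline{z})$, once via the hyperbola substitutions $x+\overline{x}=2\alpha$, etc.), and reduce the resulting inner sum to $a_p(\gamma^2)$ by the Ahlgren--Ono chain of substitutions, with the polynomial terms collecting to $-p^3+2p^2$. The only caveat is that the deferred ``bookkeeping'' is in fact the bulk of the argument (the paper needs roughly a page of explicit substitutions, the identity \eqref{final-lemma-1.1}, and the count $C_p^{(0)}=3(p-2)$ of the degenerate locus), so what you have is a correct and faithful outline rather than a complete proof.
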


\begin{proof}
Applying Proposition \ref{proposition-1} for $m=3,$ we have

\begin{align}\label{eqn-4}
S(4,\phi)_p&=p\phi(-1)\sum_{x,y,z\in\mathbb{F}_p^{\times}}\phi(x+y+z+1)
\phi(\overline{x}+\overline{y}+\overline{z}+1).
\end{align}

Now, suppose that $C_p:=|\{(x,y,z,u)\in(\F_p^{\times})^4: x+\overline{x}+y+\overline{y}+z+\overline{z}+u+\overline{u}=0\}|.$

In \eqref{new-eq-1}, by taking the transformations $x\rightarrow ux, y\rightarrow uy, z\rightarrow uz$ and $u\rightarrow u,$ we have 
\begin{align}\label{Curve}
u^2(1+x+y+z)=-(1+\overline{x}+\overline{y}+\overline{z}).
\end{align}
 Then the number of $\F_p$-rational points of $X$ can be expressed as

\begin{align}
C_p&=(p-1)\cdot C_p^{(0)}+\sum\limits_{\substack{x,y,z\in\F_p^{\times}\\1+x+y+z\neq0,\ 1+\overline{x}+\overline{y}+\overline{z}\neq0}}(1+\phi(-(1+x+y+z)(1+\overline{x}+\overline{y}+\overline{z}))),
\end{align} 

where $C_p^{(0)}$ is the number of simultaneous solutions of $1+x+y+z=0$ and $1+\overline{x}+\overline{y}+\overline{z}=0.$
Now, simply eliminating $x$ from the second equation by using the first and then by applying standard algebraic manipulations the problem reduces to solving $(y+z)(1+y)(1+z)=0.$ This yields 
$C_p^{(0)}=3(p-2).$ Moreover, note that each of this simultaneous solution $(x,y,z)$ corresponds to $(p-1)$ solutions of \eqref{Curve}, namely $(x,y,z,u),$ where $u\in\F_p\setminus\{0\}.$  
Using this facts in the expression of $C_p,$ we obtain that

\begin{align}\label{final-eq-2}
C_p&=3(p-1)(p-2)+(p-1)^3-2(p-1)^2+3(p-2)\notag\\
&+\phi(-1)\sum\limits_{x,y,z\in\F_p^{\times}}\phi((1+x+y+z)(1+\overline{x}+\overline{y}+\overline{z})).   
\end{align}
Comparing \eqref{eqn-4} and \eqref{final-eq-2} we have

\begin{align}\label{final-eq-3}
C_p=(p-1)^3-2(p-1)^2+3(p-1)(p-2)+3(p-2)+\frac{S(4,\phi)_p}{p}.
\end{align}
On the other hand, replacing $x+1/x$ by $2\alpha,$ $y+1/y$ by $2\beta$ and $z+1/z$ by $2\gamma$ in \eqref{new-eq-1}, if we count $C_p,$ then we deduce that

$$C_p=\sum\limits_{\alpha,\beta,\gamma\in\F_p}(1+\phi(\alpha^2-1))\cdot (1+\phi(\beta^2-1))\cdot(1+\phi(\gamma^2-1))\cdot (1+\phi((\alpha+\beta+\gamma)^2-1)).$$ 

Here note that similar arguments can be found in \cite{ahl-ono-1}.
Now, taking the transformation $\gamma\rightarrow \gamma-\beta$ followed by the transformation $\beta\rightarrow-\beta$ and expanding further by using \eqref{final-lemma-1.1} we have that 

\begin{align}\label{final-eq-5}
C_p=p^3-4p^2+6p-4+\sum\limits_{\gamma\in\F_p}\left(\sum\limits_{\alpha\in\F_p}
\phi(\alpha^2-1)\phi((\alpha+\gamma)^2-1)\right)^{2}=p^3-4p^2+6p-4+A_p,
\end{align} 

where

\begin{align}
A_p&=\sum\limits_{\gamma\in\F_p}\left(\sum\limits_{\alpha\in\F_p}
\phi(\alpha^2-1)\phi((\alpha+\gamma)^2-1)\right)^{2}\notag\\
&=\sum\limits_{\gamma\in\F_p}\left(\sum\limits_{\alpha\in\F_p}
\phi((\alpha-1)(\alpha+\gamma)+1))\phi((\alpha+1)(\alpha+\gamma-1))\right)^{2}.\notag
\end{align} 

Now, multiplying each term under the character and rearranging the terms we may write

$$
A_p=\sum\limits_{\gamma\in\F_p}\left(\sum\limits_{\alpha\in\F_p}
\phi\left((\alpha+\gamma/2)^2-(\gamma/2+1)^2\right)
\phi\left((\alpha+\gamma/2)^2-(\gamma/2-1)^2\right)
\right)^2.
$$

If we make the transformation $\alpha\rightarrow\alpha-\gamma/2,$ and subsequently replacing $\gamma$ by $2(\gamma+1)$ we obtain that

$$
A_p=\sum\limits_{\gamma\in\F_p}\left(\sum\limits_{\alpha\in\F_p}
\phi(\alpha^2-\gamma^2)\phi(\alpha^2-(\gamma+2)^2)
\right)^2.
$$
Furthermore, transforming $\mu\rightarrow(\alpha^2-\gamma^2)$ and then expanding by using \eqref{final-lemma-1.1} we may write

\begin{align}
A_p=p^2-3p+\sum\limits_{\gamma\in\F_p}\left(\sum\limits_{\mu\in\F_p}
\phi(\mu)\phi(\mu-4\gamma-4)\phi(\mu+\gamma^2)
\right)^2-2\sum\limits_{\gamma,\mu\in\F_p}
\phi(\mu)\phi(\mu-4\gamma-4)\phi(\mu+\gamma^2).\notag    
\end{align}
Now, transforming $\mu\rightarrow 4(\gamma+1)\mu$ for $\gamma\neq-1$ and expanding further by using \eqref{final-lemma-1.1} we may write

$$
\sum\limits_{\gamma,\mu\in\F_p}
\phi(\mu)\phi(\mu-4\gamma-4)\phi(\mu+\gamma^2)=1.
$$

Substituting this value in the above expression of $A_p$ and transforming $\mu\rightarrow 4(\gamma+1)\mu$ we obtain

$$
A_p=-3p+p^2+\sum\limits_{\gamma\in\F_p\setminus\{0,-1\}}\left(\sum\limits_{\mu\in\F_p}
\phi(\mu)\phi(\mu-1)\phi\left(\frac{4\gamma+4}{\gamma^2}\mu+1\right)
\right)^2.
$$
Now, making the transformations $\mu\rightarrow1/\mu$ and $\gamma\rightarrow2/(\gamma-1)$ we derive

$$
A_p=-3p+p^2+\sum\limits_{\gamma\in\F_p\setminus\{\pm1\}}\left(\sum\limits_{\mu\in\F_p}
\phi(\mu)\phi(1-\mu)\phi(\mu+\gamma^2-1)
\right)^2.
$$

Finally, taking $\mu\rightarrow1-\mu$ we have 

\begin{align}\label{final-eq-4}
A_p=1-3p+p^2+\sum\limits_{\gamma\in\F_p\setminus\{0,\pm1\}} a_p(\gamma^2)^2.  
\end{align}
Now, substituting \eqref{final-eq-4} into \eqref{final-eq-5} and then substituting the resultant expression into \eqref{final-eq-3} we conclude the proof.

\end{proof}
\section{Proof of Theorem \ref{relation-2}}

In this section we aim to prove Theorem \ref{relation-2}.  We use the results stated in the previous sections. More precisely,  we analyse the structure of $2$-power torsion points of Legendre elliptic curves and apply the work of Deuring and Schoof to obtain a closed formula for $S(4,\phi)_p$ in terms of weighted sums of Hurwitz class numbers. Similar counting arguments can also be found in \cite{ahl-ono-1}.

\begin{proof}[Proof of Theorem \ref{relation-2}]
Using Proposition \ref{proposition-hypergeometric} we may write $S(4,\phi)_p$ as the second moment of $a_p(\lambda^2).$ More precisely, we have 

\begin{align}\label{twisted-sum}
S(p,\phi)_p+p^3-2p^2=p\sum\limits_{\lambda\neq0,\pm1}a_p(\lambda^2)^2.    
\end{align}

Now, Proposition \ref{isomorphism-1} gives that the moments of $a_p(\lambda^2)$ can be interpreted by the moments of the integers $s$ such that $-2\sqrt{p}<s<2\sqrt{p}$ and $s\equiv p+1\pmod{8}.$ We now first prove claim (1) by considering the following three cases:\\

%%%%%%%%%%%%%%%%%%%%%%%%%%%%%%%%%%%%%%%%%%
    
    \noindent Case 1: Suppose that $s\not\equiv p+1\pmod{16}.$ Let $j(E_{\lambda^2}^{\Leg})\neq0,1728.$ Then by Proposition \ref{isomorphism-1} and Theorem \ref{Schoof} we have that the number of isomorphism classes of elliptic curves with $p+1-s$ points is $H^*\left(\frac{4p-s^2}{4}\right)$ and each such isomorphism class contains $E^{\Leg}_{\lambda^2}$ for some $\lambda\neq0,\pm1.$ Moreover, using Lemma \ref{final-count-1} we count that there are $4$ values of $\lambda$ those are present in the same class. Hence, we  have that the contribution of this case to the sum on the right side of \eqref{twisted-sum} is given by
    
$$4p\sum\limits_{s\not\equiv p+1\pmod{16}} H^*\left(\frac{4p-s^2}{4}\right)s^2.$$
    
    \noindent Case 2: Suppose that $s\equiv p+1\pmod{16}$ and $\Z/4\Z\times \Z/4\Z\subseteq E(\F_p).$ Then proceeding similarly as in Case 1 and using Proposition \ref{isomorphism-2} we have that any isomorphism class of elliptic curves with $p+1-s$ points contains $E^{\Leg}_{\lambda^2}$ such that $1-\lambda^2$ is a square. By Theorem \ref{Schoof} and Lemma \ref{final-count-1} we observe that the contribution of this case to the sum on the right side of \eqref{twisted-sum} is given by
    
$$12p\sum\limits_{s\equiv p+1\pmod{16}} H^*\left(\frac{4p-s^2}{16}\right)s^2.$$

 \noindent Case 3: Suppose that $s\equiv p+1\pmod{16}$ and $\Z/4\Z\times \Z/4\Z\not\subseteq E(\F_p).$ Then any isomorphism class of elliptic curves with $p+1-s$ points are those that contains $\Z/2\Z\times \Z/2\Z$ but not $\Z/4\Z\times \Z/4\Z.$ Therefore, By Theorem \ref{Schoof} and Lemma \ref{final-count-1} we have that the contribution of this case to the required sum is given by
$$4p\sum\limits_{s\equiv p+1\pmod{16}}\left(H^*\left(\frac{4p-s^2}{4}\right)- H^*\left(\frac{4p-s^2}{16}\right)\right)s^2.$$ 
Now, combining all the three cases we obtain that  
\begin{align}\label{final-eq-6}
    p\sum\limits_{\substack{\lambda\neq0,\pm1\\ j\neq0,1728}}a_{p}(\lambda^2)^2=4p\sum\limits_{s\equiv p+1\pmod{8}} H^*\left(\frac{4p-s^2}{4}\right)s^2 + 8p\sum\limits_{s\equiv p+1\pmod{16}} H^*\left(\frac{4p-s^2}{16}\right) s^2.
    \end{align}
    Now, the only remaining possibilities that we need to include in the required sum are those for which $j(E_{\lambda^2}^{\Leg})=0,1728.$ To do this, we perform the same three cases as above and using Proposition \ref{isomorphism-1}, Proposition \ref{isomorphism-2}, Lemma \ref{final-count-3}, Lemma \ref{final-count-4} and Theorem \ref{Schoof} together we conclude the first part of the theorem. We prove claim (2) by proceeding similar arguments as discuss above.
\end{proof}

%%%%%%%%%%%%%%%%%%%%
%%%%%%%%%%%%%%%%%%%%%
%%%%%%%%%%%%%%%%%%%%%

%%%%%%%%%%%%%%%%%%%%%%%%%%%%%%%%%%%%%%%%
%%%%%%%%%%%%%%%%%%%%%%%%%%%%%%%%%%%%%%%%

\section{Harmonic Maass forms and Mock modular forms}\label{HarmonicMaassForms}
In this section we first discuss the theory of harmonic Maass forms and apply this to obtain the asymptotic formulas. For a detailed study of harmonic Maass forms we refer \cite{KO}. Let $\Gamma(\alpha;x):=\int_{\alpha}^{\infty}e^{-t}t^{x-1}dt$ be the incomplete Gamma function. We first recall the following celebrated theorem of Zagier.

\begin{theorem}[\cite{Zagier}]\label{ZagierSeries}
		The function
		$$
		\mathcal{H}(\tau)=-\frac{1}{12}+\sum\limits_{n=1}^\infty H^\ast(n)q_\tau^n+\frac{1}{8\pi\sqrt{y}}+\frac{1}{4\sqrt{\pi}}\sum\limits_{n=1}^\infty n\Gamma(-\frac{1}{2}; 4\pi n^2y)q_{\tau}^{-n^2},
		$$
		where $\tau=x+iy\in \HH$ and $q_\tau:=e^{2\pi i\tau},$ is a weight $3/2$ harmonic Maass form with manageable growth on $\Gamma_0(4).$ 
	\end{theorem}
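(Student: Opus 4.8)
The plan is to verify the three defining properties of a weight-$3/2$ harmonic Maass form on $\Gamma_0(4)$ for $\mathcal{H}$: modular transformation with the theta multiplier, annihilation by the weight-$3/2$ hyperbolic Laplacian $\Delta_{3/2}$, and manageable (at most linear exponential) growth at the cusps. First I would split $\mathcal{H}=\mathcal{H}^++\mathcal{H}^-$ into its holomorphic part $\mathcal{H}^+(\tau)=-\tfrac1{12}+\sum_{n\geq1}H^{\ast}(n)q_\tau^{n}$ and its non-holomorphic part $\mathcal{H}^-(\tau)=\tfrac1{8\pi\sqrt{y}}+\tfrac1{4\sqrt\pi}\sum_{n\geq1}n\,\Gamma(-\tfrac12;4\pi n^2y)q_\tau^{-n^2}$, and treat harmonicity and growth first, modularity second.

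Step 1 (harmonicity and the shadow --- the routine part). Recall $\Delta_k=-\xi_{2-k}\circ\xi_k$ with $\xi_k=2iy^{k}\,\overline{\partial/\partial\bar\tau}$. Every $q_\tau^{n}$ with $n\geq1$ is holomorphic, so $\xi_{3/2}\mathcal{H}=\xi_{3/2}\mathcal{H}^-$; a direct differentiation, using $\tfrac{d}{dy}\Gamma(-\tfrac12;4\pi n^2y)=-4\pi n^2(4\pi n^2y)^{-3/2}e^{-4\pi n^2y}$, collapses each non-holomorphic term and yields $\xi_{3/2}\mathcal{H}=-\tfrac1{16\pi}\,\theta(\tau)$, where $\theta(\tau)=\sum_{n\in\Z}q_\tau^{n^2}$ is the Jacobi theta series, a holomorphic modular form of weight $1/2$ on $\Gamma_0(4)$. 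Since $\xi_{3/2}\mathcal{H}$ is holomorphic it is annihilated by $\xi_{1/2}$, so $\Delta_{3/2}\mathcal{H}=-\xi_{1/2}(\xi_{3/2}\mathcal{H})=0$: thus $\mathcal{H}$ is harmonic and its shadow is a multiple of $\theta$, independently of modularity. The displayed $q$-expansion is also visibly of manageable-growth type at $i\infty$, since the only exponentially varying terms, $n\,\Gamma(-\tfrac12;4\pi n^2y)q_\tau^{-n^2}$, decay rapidly.

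Step 2 (modularity --- the crux). The plan is to identify $\mathcal{H}$, up to an explicit constant, with the value at a special point $s_0$ of the real-analytic weight-$3/2$ Eisenstein series attached to the cusp $\infty$ of $\Gamma_0(4)$,
$$
E(\tau,s)=\sum_{\gamma\in\Gamma_\infty\backslash\Gamma_0(4)}\overline{\nu(\gamma)}\,(c\tau+d)^{-3/2}\,|c\tau+d|^{-2s}\,y^{s},\qquad \gamma=\left(\begin{smallmatrix}\ast&\ast\\ c&d\end{smallmatrix}\right),
$$
with $\nu$ the theta multiplier. For weight $3/2$ the holomorphic Eisenstein series fails to converge, which is precisely why one must pass through the non-holomorphic $E(\tau,s)$: it converges for $\mathrm{Re}(s)$ large, is by construction weight-$3/2$ modular on $\Gamma_0(4)$ for each such $s$, continues meromorphically to a neighbourhood of the special point $s_0$ at which its Laplace eigenvalue vanishes (so that $E(\tau,s_0)$ is harmonic), and one then computes its Fourier expansion at $\infty$ by the standard unfolding followed by a Gauss-sum and Bessel-function evaluation. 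The constant and negative-index terms reproduce $\tfrac1{8\pi\sqrt{y}}$ and the incomplete-Gamma terms of $\mathcal{H}^-$, while the $n$-th holomorphic coefficient comes out proportional to a special $L$-value that, by the Dirichlet class number formula, equals the corresponding multiple of $H^{\ast}(n)$. Matching all coefficients gives $\mathcal{H}=c'\,E(\tau,s_0)$, hence the desired transformation law; expanding $E(\tau,s_0)$ at the cusps $0$ and $1/2$ then exhibits manageable growth there as well. (Alternatively one may invoke Zagier's original route via restriction of a genus-two Siegel--Eisenstein series to the diagonal, or a Siegel--Weil/theta-lift argument; any of these supplies the modularity, after which Step 1 closes the proof.)

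The genuine obstacle is Step 2: the meromorphic continuation of the half-integral weight real-analytic Eisenstein series to $s_0$ and, above all, the explicit Fourier coefficient computation --- performing the unfolding, evaluating the resulting quadratic exponential sums, producing the incomplete Gamma functions from the archimedean integral, and matching the holomorphic coefficients with $H^{\ast}(n)$. That last identification rests on the class number formula together with a careful analysis of the local densities at $2$ and at primes dividing the conductor, which is exactly where the distinction between $H$ and $H^{\ast}$ (the factors $\Omega(\mathcal{O}')$, i.e. the exceptional behaviour at the discriminants $-3$ and $-4$) must be tracked. Harmonicity, the shape of the shadow, and manageable growth are all formal once modularity is established.
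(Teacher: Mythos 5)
The paper does not prove this statement; it is imported verbatim from Zagier's 1975 note and used as a black box, so there is no in-paper argument to compare against. Your outline is nevertheless the standard (indeed, essentially Zagier's original) route. Step 1 is correct and complete as written: with $\xi_k=2iy^k\overline{\partial/\partial\bar\tau}$ one checks that the term $\tfrac{1}{8\pi\sqrt{y}}$ contributes $-\tfrac{1}{16\pi}$ to the constant term of the shadow and each term $\tfrac{1}{4\sqrt{\pi}}n\,\Gamma(-\tfrac12;4\pi n^2y)q_\tau^{-n^2}$ contributes $-\tfrac{1}{8\pi}q_\tau^{n^2}$, which matches $-\tfrac{1}{16\pi}\theta(\tau)$ exactly (the factor $2$ from $\pm n$); hence $\Delta_{3/2}\mathcal{H}=0$ and the growth condition at $i\infty$ is visible from the expansion.

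The issue is that Step 2 is only a roadmap, and Step 2 is where the entire content of the theorem lives. You correctly identify the three hard points --- the meromorphic continuation of $E(\tau,s)$ past the boundary of convergence to the harmonic point $s_0$ (Hecke's trick), the unfolding and evaluation of the resulting quadratic Gauss sums and archimedean integrals (which is what produces both the $y^{-1/2}$ term and the incomplete Gamma functions), and the identification of the $n$-th holomorphic coefficient with $H^{\ast}(n)$ via the class number formula and the local densities at $2$ --- but none of these is carried out, and the constant $c'$ and the point $s_0$ are never pinned down. As submitted this is therefore a correct proof \emph{strategy} rather than a proof: everything you have actually verified (harmonicity, the shape of the shadow, growth at $i\infty$) is, as you yourself note, formal once modularity is known, and modularity is precisely what remains to be established. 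To close the gap you would either have to execute the Fourier expansion of $E(\tau,s_0)$ in full, or do what the paper does and cite Zagier (or Hirzebruch--Zagier, or Cohen's computation of the weight-$3/2$ Cohen--Eisenstein series) for the transformation law.
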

	
	Furthermore, every weight $k\neq 1$ harmonic weak Maass form $f(\tau)$ has a Fourier expansion of the form
	\begin{equation}\label{HMFFourier}
		f(\tau)=f^{+}(\tau)+\frac{(4\pi y)^{1-k}}{k-1}\overline{c_f^{-}(0)}+f^{-}(\tau),
	\end{equation}
	where 
	\begin{equation}\label{HMSParts}
		f^{+}(\tau)=\sum\limits_{n=m_0}^\infty c_f^{+}(n)q_\tau^n \ \ \
		{\text {\rm and}}\ \ \ 
		f^{-}(\tau)=\sum\limits_{\substack{n=n_0\\ n\neq 0}}^\infty \overline{c_f^{-}(n)}n^{k-1}\Gamma(1-k;4\pi |n| y)q_\tau^{-n}.
	\end{equation}
	
	Note that the function $f^{+}(\tau)$ is called the {\it holomorphic part} of $f$ or {\it mock modular form.} We now recall the Rankin-Cohen bracket operators. 
Let $f$ and $g$ be smooth functions defined on the upper-half complex plane $\HH$, and let $k, l\in\R_{>0}$ and $\nu\in\N_0.$ Then the $\nu$th Rankin-Cohen bracket of $f$ and $g$ is defined by
	\begin{equation}\label{RankinCohenBracket}
		[f,g]_\nu:=\frac{1}{(2\pi i)^\nu}\sum\limits_{r+s=\nu}(-1)^r\binom{k+\nu-1}{s}\binom{l+\nu-1}{r}\frac{d^r}{d\tau^r}f\cdot\frac{d^s}{d\tau^s}g.
	\end{equation}

 	In our purpose, we consider $\nu=1.$ 
	
	\begin{proposition}[Theorem 7.1 of \cite{Cohen}]\label{BracketProposition}
		Let $f$ and $g$ be (not necessarily holomorphic) modular forms of weights $k$ and $l,$ respectively on a congruence subgroup $\Gamma.$ Then the following are true.

		\noindent
		(1) We have that $[f,g]_\nu$ is modular of weight $k+l+2\nu$ on $\Gamma.$ 
		
		\noindent
		(2) If $\gamma\in SL_2(\R),$ then
		under the usual modular slash operator we have
		$$
		[f|_k\gamma,g|_l\gamma]_\nu=([f,g]_\nu)|_{k+l+2\nu}\gamma.
		$$
	\end{proposition}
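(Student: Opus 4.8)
The plan is to deduce (1) from (2) and to prove (2) by the Cohen--Kuznetsov generating-series device.

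\emph{Reduction of (1) to (2).} Specialize (2) to $\gamma\in\Gamma$. Since $f,g$ are $\Gamma$-modular of weights $k,l$ we have $f|_k\gamma=f$ and $g|_l\gamma=g$, so (2) yields $([f,g]_\nu)|_{k+l+2\nu}\gamma=[f,g]_\nu$ for all $\gamma\in\Gamma$; that is, $[f,g]_\nu$ obeys the weight-$(k+l+2\nu)$ automorphy law on $\Gamma$. The growth/regularity conditions pass to $[f,g]_\nu$ for free, since it is a finite $\C$-linear combination of products $\frac{d^r}{d\tau^r}f\cdot\frac{d^s}{d\tau^s}g$ and $\tau$-differentiation preserves the relevant growth at cusps (for harmonic Maass forms the derivatives retain ``manageable growth''). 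So only (2) requires work.

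\emph{Proof of (2).} For a parameter $\kappa>0$ and a smooth $F$ on $\HH$ set $(\kappa)_n:=\kappa(\kappa+1)\cdots(\kappa+n-1)$ and
\begin{equation}\label{eq:CKseries}
\widetilde F_\kappa(\tau,X):=\sum_{n\ge 0}\frac{1}{n!\,(\kappa)_n}\,\frac{d^nF}{d\tau^n}(\tau)\,X^n\ \in\ \C[[X]];
\end{equation}
write $\widetilde f_k,\widetilde g_l$ and likewise $\widetilde{f|_k\gamma}$ (parameter $k$), $\widetilde{g|_l\gamma}$ (parameter $l$). The single nontrivial input is the Cohen--Kuznetsov transformation law: for $\gamma=\left(\begin{smallmatrix}a&b\\c&d\end{smallmatrix}\right)\in\SL_2(\R)$ and $j=j(\gamma,\tau)=c\tau+d$,
\begin{equation}\label{eq:CKtransf}
\widetilde{f|_k\gamma}(\tau,X)=j^{-k}\,e^{-cX/j}\,\widetilde f_k\bigl(\gamma\tau,\,X/j^2\bigr),
\end{equation}
and the same with $(g,l)$ in place of $(f,k)$. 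Granting \eqref{eq:CKtransf}, put $\Psi(\tau,X):=\widetilde f_k(\tau,X)\,\widetilde g_l(\tau,-X)$ and $\Psi^\gamma(\tau,X):=\widetilde{f|_k\gamma}(\tau,X)\,\widetilde{g|_l\gamma}(\tau,-X)$. Applying \eqref{eq:CKtransf} to $f$ in the variable $X$ and to $g$ in the variable $-X$, the correction factors $e^{-cX/j}$ and $e^{+cX/j}$ cancel, so $\Psi^\gamma(\tau,X)=j^{-k-l}\,\Psi(\gamma\tau,\,X/j^2)$. Reading off the coefficient of $X^\nu$ (the right-hand side acquires an extra $j^{-2\nu}$) gives $([X^\nu]\Psi^\gamma)(\tau)=j^{-(k+l+2\nu)}\,([X^\nu]\Psi)(\gamma\tau)$. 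Finally, with $f^{(r)}:=d^rf/d\tau^r$ one has $[X^\nu]\Psi=\sum_{r+s=\nu}\tfrac{(-1)^s}{r!\,s!\,(k)_r(l)_s}f^{(r)}g^{(s)}$, and using $\binom{k+\nu-1}{s}=\tfrac{(k)_\nu}{s!\,(k)_{\nu-s}}$ together with its $l$-analogue, a one-line rearrangement identifies $[X^\nu]\Psi$ with $\tfrac{(-1)^\nu(2\pi i)^\nu}{(k)_\nu(l)_\nu}[f,g]_\nu$, a nonzero scalar multiple since $k,l>0$; the same formula relates $[X^\nu]\Psi^\gamma$ and $[f|_k\gamma,g|_l\gamma]_\nu$. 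Cancelling the scalar gives $[f|_k\gamma,g|_l\gamma]_\nu(\tau)=j(\gamma,\tau)^{-(k+l+2\nu)}[f,g]_\nu(\gamma\tau)=([f,g]_\nu|_{k+l+2\nu}\gamma)(\tau)$, which is (2).

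\emph{The main obstacle: proving \eqref{eq:CKtransf}.} This is the one genuine computation, and the crux. A clean route: $u:=\widetilde F_\kappa(\tau,X)$ is, by its very definition \eqref{eq:CKseries}, the unique formal-power-series solution in $X$ of the heat-type equation $X\,\partial_X^2 u+\kappa\,\partial_X u=\partial_\tau u$ with $u|_{X=0}=F$ (the resulting recursion $c_{n+1}=\partial_\tau c_n/((n+1)(n+\kappa))$ for the $X$-coefficients has no vanishing denominators precisely because $\kappa>0$). It therefore suffices to check that both sides of \eqref{eq:CKtransf} satisfy this equation --- the left side as the weight-$k$ series of $f|_k\gamma$, the right side by a direct change-of-variables verification --- and that they agree at $X=0$, where both equal $j^{-k}f(\gamma\tau)$. (Equivalently, \eqref{eq:CKtransf} follows by induction on $n$ from Leibniz and the chain rule applied to $\frac{d^n}{d\tau^n}\bigl(j^{-k}f(\gamma\tau)\bigr)$.) The point needing care is that $f$ is only real-analytic --- e.g.\ a harmonic Maass form --- not holomorphic, so ``$\frac{d}{d\tau}$'' must be read as the holomorphic partial $\partial_\tau$, and one must confirm the chain rule still applies. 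It does, because $\tau\mapsto\gamma\tau$ is holomorphic in $\tau$: writing $f=f(w,\overline w)$, the antiholomorphic argument $\overline{\gamma\tau}=\gamma\overline\tau$ depends on $\overline\tau$ alone, so $\partial_\tau(\overline{\gamma\tau})=0$ and $\partial_\tau\bigl(f(\gamma\tau)\bigr)=(\partial_w f)(\gamma\tau)\cdot\frac{d(\gamma\tau)}{d\tau}$, exactly as for holomorphic $f$; thereafter the computation runs verbatim and everything else is bookkeeping in $\C[[X]]$ (no convergence in $X$ is needed). As a less streamlined alternative one can bypass the generating series: (2) is multiplicative in $\gamma$ because $|_k$ is a right action, so it suffices to verify it on the translations $\left(\begin{smallmatrix}1&\lambda\\0&1\end{smallmatrix}\right)$ --- immediate, since $d/d\tau$ commutes with $\tau\mapsto\tau+\lambda$ and $j\equiv 1$ --- and on $\left(\begin{smallmatrix}0&-1\\1&0\end{smallmatrix}\right)$, which together generate $\SL_2(\R)$; the generating series merely organizes this second case cleanly and uniformly in $\gamma$.
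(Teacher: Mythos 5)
The paper offers no proof of this proposition: it is imported verbatim as Theorem 7.1 of \cite{Cohen}, so there is nothing internal to compare against. Your argument is correct and is the standard proof of Rankin--Cohen covariance: (1) reduces to (2), the Cohen--Kuznetsov series turns the sum over $r+s=\nu$ into the coefficient of $X^\nu$ in $\widetilde f_k(\tau,X)\,\widetilde g_l(\tau,-X)$, the exponential correction factors cancel between the two factors, and the identity $\binom{k+\nu-1}{s}=(k)_\nu/\bigl(s!\,(k)_{\nu-s}\bigr)$ does match your coefficient to the paper's normalization of $[f,g]_\nu$ (I checked the signs: $(-1)^\nu(-1)^r=(-1)^s$). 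The only input you assert rather than derive in full is the Cohen--Kuznetsov transformation law, but both routes you sketch (uniqueness of the solution of the heat-type recursion $c_{n+1}=\partial_\tau c_n/((n+1)(n+\kappa))$, or induction on $n$ via Leibniz) are standard and complete; your observation that only the holomorphic partial $\partial_\tau$ is involved, so the chain rule applies to merely real-analytic $f$ because $\overline{\gamma\tau}$ depends on $\overline\tau$ alone, is precisely the point that must be made for harmonic Maass forms. The one loose end worth a sentence in a careful write-up is the choice of branch of $j^{k}$ when $k,l$ are half-integral (as in the paper's application with $k=3/2$, $l=1/2$), so that $j^{-k}j^{-l}=j^{-(k+l)}$ holds identically on $\HH$.
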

	
We apply Rankin-Cohen bracket operator on the Zagier's function $\mathcal{H}(\tau)$ and certain univariate theta functions and obtain the weighted class number sums appeared in Theorem \ref{relation-2} as Fourier coefficients of non-holomorphic modular forms.

We now recall holomorphic projection and some of its important facts. Let $f:\HH\rightarrow\C$ be a (not necessarily holomorphic) modular form of weight $k\geq 2$ on a congruence subgroup $\Gamma$ with the Fourier expansion given by
	$$
	f(\tau)=\sum_{n\in\Z}c_f(n,y)q_{\tau}^n,
	$$
	where $\tau=x+iy.$ Let $\{\kappa_1,\ldots, \kappa_M\}$ be the cusps of $\Gamma,$ where  $\kappa_1:=i\infty.$ Moreover, for each $j$ let $\gamma_j\in \SL_2(\Z)$ satisfy
	$\gamma_j\kappa_j=i\infty.$ Moreover, suppose that the following are true:

	\noindent
	(1) There is an $\varepsilon>0$ and a constant $c_0^{(j)}\in\C$ for which
	$$
	f\left(\gamma_j^{-1}w\right)\left(\frac{d\tau}{dw}\right)^{k/2}=c_0^{(j)}+O(\im(w))^{-\varepsilon},
	$$
	for all $j=1,\ldots,M$ and $w=\gamma_j\tau.$

	\noindent
	(2) For all $n>0,$ we have that $c_f(n,y)=O(y^{2-k})$ as $y\rightarrow0.$
	
	Then the {\it holomorphic projection of $f$} is defined as follows:
	\begin{equation}
		(\pi_{\text{hol}}f)(\tau):=c_0+\sum\limits_{n=1}^{\infty}c(n)q_{\tau}^n,
	\end{equation}
	where $c_0=c_0^{(1)}$ and for $n\geq1$
	$$
	c(n)=\frac{(4\pi n)^{k-1}}{(k-2)!}\int_{0}^{\infty}c_f(n,y)e^{-4\pi ny}y^{k-2}dy.
	$$
The next proposition provides the holomorphicity of $\pi_{\text{hol}}(f).$	
\begin{proposition}[Proposition 10.2 of \cite{KO}]\label{HolProjProp}
		If the above hypotheses are true, then for $k>2$ $\pi_{\text{hol}}(f)$ is a weight $k$ holomorphic modular form on $\Gamma.$ 
	\end{proposition}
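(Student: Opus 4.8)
Since the statement is exactly Proposition 10.2 of \cite{KO}, the cleanest route is to invoke it directly, but here is the argument I would reconstruct. The plan is to realize $\pi_{\text{hol}}(f)$ as the unique holomorphic modular form of weight $k$ on $\Gamma$ having the same Petersson inner products against weight-$k$ Poincaré series as $f$ does. First I would introduce, for $m\geq 0$, the weight-$k$ Poincaré series $P_m$ on $\Gamma$ attached to the cusp $i\infty$, obtained by averaging $q_\tau^m=e^{2\pi i m\tau}$ over $\Gamma_\infty\backslash\Gamma$ under the weight-$k$ slash action. The hypothesis $k>2$ is used right here: it guarantees absolute and locally uniform convergence of these series, so that $P_0$ is the Eisenstein series at $i\infty$, each $P_m$ with $m\geq 1$ lies in $S_k(\Gamma)$, $\{P_m:m\geq1\}$ spans $S_k(\Gamma)$, and $\{P_m:m\geq 0\}$ spans $M_k(\Gamma)$.

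Next I would compute $\langle f,P_m\rangle$ for $m\geq 1$ by the standard unfolding trick: write the inner product as an integral over a fundamental domain, fold in the sum defining $P_m$, replace the domain by the strip $\Gamma_\infty\backslash\HH$, and insert the Fourier expansion $f(\tau)=\sum_n c_f(n,y)q_\tau^n$. The computation collapses to
$$
\langle f,P_m\rangle \;=\; \beta_k\int_0^\infty c_f(m,y)\,e^{-4\pi m y}\,y^{k-2}\,dy,\qquad \beta_k>0,
$$
where $\beta_k$ depends only on $k$ and the normalization of the inner product. Here hypothesis (2), $c_f(n,y)=O(y^{2-k})$ as $y\to 0$, is precisely what makes the right-hand integral converge at $y=0$, and hypothesis (1), the boundedness of $f$ as one approaches each cusp $\kappa_j$ after applying the scaling matrix $\gamma_j$, is precisely what makes the fundamental-domain integral converge and what legitimizes interchanging the sum over $\Gamma_\infty\backslash\Gamma$ with the integral. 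Comparing with the definition of $c(n)$, this shows $\langle f,P_n\rangle=\frac{(k-2)!}{(4\pi n)^{k-1}}c(n)$ for all $n\geq1$ — exactly the relation a genuine cusp form $\sum b(n)q_\tau^n$ satisfies with its own coefficients $b(n)$.

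To conclude, let $g_0\in S_k(\Gamma)$ be the unique cusp form with $\langle g_0,P_n\rangle=\langle f,P_n\rangle$ for all $n\geq 1$; it exists and is unique because the Petersson pairing is nondegenerate on $S_k(\Gamma)$ and the $P_n$ span that space. By the previous step the $n$th Fourier coefficient of $g_0$ equals $c(n)$ for every $n\geq1$. Finally I would use hypothesis (1) to read off the constant terms of $f$ at the cusps $\kappa_1=i\infty,\dots,\kappa_M$, namely the numbers $c_0^{(1)}=c_0,\dots,c_0^{(M)}$, and add to $g_0$ the unique element of the Eisenstein subspace of $M_k(\Gamma)$ having those constant terms at the respective cusps. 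The result $g\in M_k(\Gamma)$ then has Fourier expansion $c_0+\sum_{n\geq1}c(n)q_\tau^n$, i.e.\ $g=\pi_{\text{hol}}(f)$; equivalently, $f-\pi_{\text{hol}}(f)$ is orthogonal to all cusp forms, so $\pi_{\text{hol}}$ really is a projection onto $M_k(\Gamma)$.

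I expect the main obstacle to be the analytic bookkeeping in the unfolding step: one must verify absolute convergence of the unfolded double integral so that Fubini applies, and control the contributions near every cusp, which is exactly the role played by hypotheses (1) and (2). The hypothesis $k>2$ is also genuinely needed — for absolute convergence of the Poincaré series and for $P_0$ to be a bona fide Eisenstein series — so the boundary case $k=2$, which would require Hecke's trick and analytic continuation, is legitimately excluded. Complete details are in \cite[Proposition 10.2]{KO}.
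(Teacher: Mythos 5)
The paper offers no proof of this proposition at all: it is imported verbatim as Proposition 10.2 of \cite{KO}, so the only thing to compare your reconstruction against is the standard argument in that reference (due essentially to Sturm and Gross--Zagier), which is indeed the Poincar\'e-series/unfolding argument you describe. However, your write-up has the key steps in the wrong order, and as written one step is false. You unfold $\langle f,P_m\rangle$ directly for the given $f$ and claim that hypothesis (1) (boundedness of $f$ at the cusps) ``legitimizes interchanging the sum over $\Gamma_\infty\backslash\Gamma$ with the integral.'' It does not. If any constant term $c_0^{(j)}$ at a cusp below $i\infty$ is nonzero, the unfolded integral over the strip $\Gamma_\infty\backslash\HH$ is not absolutely convergent near those cusps ($|f|$ grows like $y^{-k}$ in strip coordinates there, and $y^{-k}\cdot y^{k-2}=y^{-2}$ is not integrable at $y=0$), and the formal unfolded answer is simply wrong. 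A concrete counterexample is $f=E_k$ a holomorphic Eisenstein series: then $\langle E_k,P_m\rangle=0$ because Eisenstein series are orthogonal to cusp forms, while your claimed identity would give $\langle E_k,P_m\rangle=\frac{(k-2)!}{(4\pi m)^{k-1}}a_{E_k}(m)\neq 0$. Consequently your intermediate assertion that the Riesz representative $g_0\in S_k(\Gamma)$ has Fourier coefficients exactly $c(n)$ is also false in general (it would force $\sum_{n\geq 1}c(n)q_\tau^n$ to be cuspidal, which already fails for $f=E_k$), and adding an Eisenstein series at the end then produces a form whose coefficients are $c(n)$ plus the Eisenstein coefficients rather than $c(n)$.

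The fix, which is how \cite{KO} and Gross--Zagier actually argue, is to perform the Eisenstein subtraction \emph{before} unfolding: set $\tilde f:=f-\sum_j c_0^{(j)}E_j$, where $E_j$ is the weight-$k$ holomorphic Eisenstein series attached to the cusp $\kappa_j$ (these exist precisely because $k>2$). Hypothesis (1) then gives $\tilde f=O(\im(w)^{-\varepsilon})$ at \emph{every} cusp, which is the decay actually needed to justify Fubini in the unfolding; one concludes that $\sum_{n\geq1}\tilde c(n)q_\tau^n$ is the cusp form representing $g\mapsto\langle\tilde f,g\rangle$ on $S_k(\Gamma)$, and since the integral formula defining $c(n)$ is the identity on holomorphic forms, $\pi_{\text{hol}}(f)=\pi_{\text{hol}}(\tilde f)+\sum_j c_0^{(j)}E_j\in M_k(\Gamma)$. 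With that reordering your argument becomes the standard proof; as written, the unfolding step is the genuine gap.
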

	
	If $f$ is a harmonic Maass form of weight $k\in \frac{1}{2}\Z$ on $\Gamma_0(N)$ with manageable growth at the cusps and $g$ is a holomorphic modular form of weight $l$ on $\Gamma_0(N),$ then the holomorphic modular form $\pi_{\text{hol}}(f)$ has the following decomposition form 
	\begin{equation}\label{HolomorphicProjectionDecomposition}
		\pi_{\text{hol}}([f,g]_\nu)=[f^{+},g]_\nu+\frac{(4\pi)^{1-k}}{k-1}\overline{c_f^{-}(0)}\pi_{\text{hol}}([y^{1-k},g]_\nu)+\pi_{\text{hol}}([f^{-},g]_\nu).
	\end{equation}

 Thanks to Mertens \cite{mertens} for the following computations that  provide closed formulas of the Fourier expansions of the second and third terms present in the decomposition of $\pi_{\text{hol}}([f,g]_\nu)$ as given in \eqref{HolomorphicProjectionDecomposition}.
	
	\begin{lemma}[Lemma V.1.4 of \cite{mertens}]\label{HolProjExplicit1}
		If the hypotheses above is true and $g(\tau)$ has Fourier expansion
		$g(\tau)=\sum\limits_{n=0}^{\infty}a_g(n)q_{\tau}^n,$
		then we have
		$$
		\frac{(4\pi)^{1-k}}{k-1}\pi_{\text{hol}}([y^{1-k},g]_\nu)=\kappa(k,l,\nu)\cdot \sum\limits_{n=0}^\infty n^{k+\nu-1}a_g(n)q_\tau^n,
		$$
		where
		$$
		\kappa(k,l,\nu):=\frac{1}{(k+l+2\nu-2)!(k-1)}\sum\limits_{\mu=0}^\nu\left(\frac{\Gamma(2-k)\Gamma(l+2\nu-\mu)}{\Gamma(2-k-\mu)}\binom{k+\nu-1}{\nu-\mu}\binom{l+\nu-1}{\mu}\right).
		$$
	\end{lemma}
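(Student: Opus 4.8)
The plan is to evaluate $\pi_{\text{hol}}([y^{1-k},g]_\nu)$ by applying the defining coefficient-extraction formula for holomorphic projection directly to the explicit function $[y^{1-k},g]_\nu$, written out as a non-holomorphic $q$-series. First I would expand the Rankin--Cohen bracket via \eqref{RankinCohenBracket} with $f=y^{1-k}$: since $\partial_\tau y=\tfrac{1}{2i}$, repeated differentiation gives $\frac{d^r}{d\tau^r}(y^{1-k})=\frac{\Gamma(2-k)}{\Gamma(2-k-r)}(2i)^{-r}y^{1-k-r}$, while $\frac{d^s}{d\tau^s}g(\tau)=\sum_{n\ge0}a_g(n)(2\pi i n)^s q_\tau^n$. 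Collecting the coefficient of $q_\tau^n$ then exhibits $[y^{1-k},g]_\nu(\tau)=\sum_{n\ge0}c(n,y)q_\tau^n$ with $c(n,y)=a_g(n)\sum_{r+s=\nu}(\text{explicit constant}_{r,s})\,y^{1-k-r}$. Here $\pi_{\text{hol}}$ is understood through its defining coefficient formula applied to this explicit function; the relevant integrals converge because each summand $y^{1-k-r}$ decays as $y\to\infty$ (so the constant term is $c_0=0$) and, with $K:=k+l+2\nu$, one has $c(n,y)=O(y^{1-k-\nu})=O(y^{2-K})$ as $y\to0$.

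The only analytic step is the standard Gamma integral: for the $r$-indexed piece of $c(n,y)$, the formula for the $n$-th coefficient of the holomorphic projection produces
\[
\int_0^\infty y^{(1-k-r)+(K-2)}e^{-4\pi n y}\,dy=\frac{\Gamma(l+2\nu-r)}{(4\pi n)^{l+2\nu-r}},
\]
which converges since $l+2\nu-r\ge l+\nu>0$. Substituting into $c(n)=\frac{(4\pi n)^{K-1}}{(K-2)!}\int_0^\infty c(n,y)e^{-4\pi n y}y^{K-2}\,dy$ and multiplying by the prefactor $\frac{(4\pi)^{1-k}}{k-1}$, the power of $n$ surviving in the $(r,s)$-term is $(K-1)+s-(l+2\nu-r)=k+\nu-1$, independent of $r$; hence the total is $n^{k+\nu-1}a_g(n)$ times a constant, as claimed. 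For $n=0$ the asserted identity reads $\kappa(k,l,\nu)\cdot 0^{k+\nu-1}a_g(0)=0$, consistent with $c_0=0$.

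It then remains to identify the constant with $\kappa(k,l,\nu)$, which is pure bookkeeping. I would separate out $(2\pi i)^{-\nu}$ from \eqref{RankinCohenBracket}, $(2i)^{-r}$ from the derivative of $y^{1-k}$, $(2\pi i)^s$ from the derivative of $g$, and the net power of $4\pi$ coming from $(4\pi n)^{K-1}$, $(4\pi n)^{-(l+2\nu-r)}$ and $(4\pi)^{1-k}$, which equals $r$ because $r+s=\nu$ and $K=k+l+2\nu$. The powers of $2$ and of $\pi$ then cancel identically, and the leftover $i^{\,s-r-\nu}=i^{-2r}=(-1)^r$ cancels the sign $(-1)^r$ appearing in \eqref{RankinCohenBracket}. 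What survives for the term $(r,s)=(\mu,\nu-\mu)$ is exactly
\[
\frac{1}{(k-1)(k+l+2\nu-2)!}\binom{k+\nu-1}{\nu-\mu}\binom{l+\nu-1}{\mu}\frac{\Gamma(2-k)\,\Gamma(l+2\nu-\mu)}{\Gamma(2-k-\mu)},
\]
and summing over $0\le\mu\le\nu$ reproduces $\kappa(k,l,\nu)$. The main obstacle is purely organizational: keeping the binomial, Gamma, and $2\pi i$/$4\pi$ factors in order so that the transcendental parts collapse to the stated closed form; there is no conceptual difficulty once the bracket has been written out explicitly.
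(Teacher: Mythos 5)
The paper does not prove this lemma; it is quoted verbatim from Mertens' thesis (Lemma V.1.4 of \cite{mertens}), and your direct computation --- expanding the bracket with $f=y^{1-k}$, projecting term by term via the Gamma integral $\int_0^\infty y^{l+2\nu-r-1}e^{-4\pi ny}\,dy=\Gamma(l+2\nu-r)/(4\pi n)^{l+2\nu-r}$, and checking that the powers of $2$, $\pi$, $i$ and the sign $(-1)^r$ cancel so that the net exponent is $n^{k+\nu-1}$ and the surviving constant is $\kappa(k,l,\nu)$ --- is exactly the argument given there and your bookkeeping checks out. The only points worth flagging are that verifying growth hypothesis (2), namely $c(n,y)=O(y^{2-K})$ as $y\to0$ with $K=k+l+2\nu$, uses $l+\nu\geq1$ (true in all applications here, e.g.\ $l=\tfrac12$, $\nu=1$), and that hypothesis (1) should also be checked at the cusps other than $i\infty$, which follows from the slash-equivariance of the bracket recorded in Proposition \ref{BracketProposition}.
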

	
	Let $P_{a,b}(X,Y)$ be the homogeneous polynomial of degree $a-2$ as defined by 
	\begin{align}\label{1}
		P_{a,b}(X,Y):=\sum\limits_{j=0}^{a-2}\binom{j+b-2}{j}X^j(X+Y)^{a-j-2},
	\end{align}
	where $a\geq 2$ is a positive integer and $b$ is any real number. 
	
	\begin{theorem}[Theorem V.1.5 of \cite{mertens}]\label{HolProjExplicit2}
		If  $c_f^{-}(n)$ and $a_g(n)$ are bounded polynomially, then we have
		$\pi_{\text{hol}}([f^{-},g]_\nu=\sum\limits_{r=1}^\infty b(r)q_\tau^{r},$
		where 
		
		\begin{displaymath}
			\begin{split}
				b(r)=-\Gamma(1-k)&\sum\limits_{m-n=r}a_g(m)\overline{c^{-}_f(n)}\sum\limits_{\mu=0}^\nu\binom{k+\nu-1}{\nu-\mu}\binom{l+\nu-1}{\mu}m^{\nu-\mu}\\ 
				&\ \ \ \ \ \ \ \ \ \ \ \ \ \ \ \ \ \ \ \ \times \left(m^{\mu-2\nu-l+1}P_{k+l+2\nu,2-k-\mu}(r,n)-n^{k+\mu-1}\right),
			\end{split}
		\end{displaymath}
		where the sum runs over positive integers $m,n.$
	\end{theorem}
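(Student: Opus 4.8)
The plan is to unwind the definition of holomorphic projection and compute the Fourier coefficients directly. By Proposition~\ref{BracketProposition}(1) the bracket $[f^{-},g]_\nu$ transforms with weight $w:=k+l+2\nu$, and since every term of $f^{-}(\tau)$ carries an incomplete Gamma factor $\Gamma(1-k;4\pi|n|y)$ — bounded as $y\to 0$, exponentially small as $y\to\infty$ — the function $[f^{-},g]_\nu$ decays at all cusps; hence the hypotheses (1)--(2) in the definition of $\pi_{\text{hol}}$ are satisfied once $c_f^{-}(n)$ and $a_g(n)$ are polynomially bounded, and the constant term vanishes. So $\pi_{\text{hol}}([f^{-},g]_\nu)=\sum_{r\ge1}b(r)q_\tau^{r}$ with
$$b(r)=\frac{(4\pi r)^{w-1}}{(w-2)!}\int_0^\infty c(r,y)\,e^{-4\pi ry}\,y^{w-2}\,dy,$$
where $c(r,y)$ is the $r$-th Fourier coefficient of $[f^{-},g]_\nu$ in its ($y$-dependent) $q$-expansion.

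Next I would compute $c(r,y)$ explicitly. Writing $f^{-}(\tau)=\sum_{n}\overline{c_f^{-}(n)}|n|^{k-1}\Gamma(1-k;4\pi|n|y)q_\tau^{-n}$ and $g(\tau)=\sum_{m\ge0}a_g(m)q_\tau^{m}$, I expand $[f^{-},g]_\nu$ from \eqref{RankinCohenBracket}. The only nonroutine ingredient is the $\tau$-derivative of the Gamma factor: from $\partial_\tau y=\tfrac{1}{2i}$ and $\tfrac{d}{dx}\Gamma(s;x)=-x^{s-1}e^{-x}$ one finds that $\partial_\tau\bigl(\Gamma(1-k;4\pi|n|y)q_\tau^{-n}\bigr)$ equals $q_\tau^{-n}$ times a linear combination of $\Gamma(1-k;4\pi|n|y)$ and $(4\pi|n|y)^{-k}e^{-4\pi|n|y}$, and higher derivatives iterate this (for $\nu=1$ at most one derivative hits each factor). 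Collecting the coefficient of $q_\tau^{r}$, i.e.\ summing over $m-n=r$, therefore yields, for each pair $(m,n)$, a finite combination of the two building blocks $\Gamma(1-k;4\pi|n|y)$ and $(4\pi|n|y)^{-k}e^{-4\pi|n|y}$ with polynomial-in-$(m,n)$ weights governed by $\binom{k+\nu-1}{\nu-\mu}\binom{l+\nu-1}{\mu}$.

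Then I would carry out the $y$-integral termwise. The block $(4\pi|n|y)^{-k}e^{-4\pi|n|y}$ produces an elementary Gamma integral that evaluates in closed form in terms of $\Gamma(w-1-k)$ and $|n|+r$, and this is the source of the $-n^{k+\mu-1}$ contributions. For the block $\Gamma(1-k;4\pi|n|y)$ I would use the classical Laplace-transform evaluation of the incomplete Gamma function — obtained either by repeated integration by parts (all boundary terms vanish since $w-2\ge0$ and $\Gamma$ is bounded at $0$ and decays at $\infty$) or from known hypergeometric identities — which gives a terminating sum. The decisive step is to recognize that, after combining the two contributions and performing the $\mu$-summation, this terminating sum is exactly the polynomial $P_{w,\,2-k-\mu}(r,n)$ of \eqref{1}; this is a finite binomial-coefficient manipulation of Vandermonde/Chu--Vandermonde type, repackaging $\sum_j\binom{j+b-2}{j}X^j(X+Y)^{a-j-2}$. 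Interchanging the $(m,n)$-summation with the integral — legitimate by the polynomial bounds together with the exponential decay of $e^{-4\pi ry}$ — then gives the stated formula, the overall $-\Gamma(1-k)$ coming from the constants in $\partial_\tau\Gamma$.

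I expect the main obstacle to be exactly the last two points: the closed-form evaluation of the incomplete-Gamma Laplace transform $\int_0^\infty\Gamma(1-k;4\pi|n|y)e^{-4\pi ry}y^{w-2}\,dy$, and the bookkeeping that identifies the resulting terminating hypergeometric expression with $P_{w,\,2-k-\mu}$. Getting the normalization constants and the sign of $\Gamma(1-k)$ correct, and verifying absolute convergence of the rearranged double sum, are the places needing care; each is elementary once the setup is right, but the combinatorial identification of $P_{a,b}$ is where the real work lies.
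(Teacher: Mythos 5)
The paper does not prove this statement: it is quoted verbatim as Theorem V.1.5 of Mertens's thesis \cite{mertens} and used as a black box, so there is no in-paper argument to compare against. Your outline does reconstruct the strategy of Mertens's own proof --- verify the growth hypotheses so that $\pi_{\text{hol}}$ applies to the weight $k+l+2\nu$ form $[f^{-},g]_\nu$, expand the bracket using $\partial_\tau\Gamma(1-k;4\pi ny)=-(4\pi n)(2i)^{-1}(4\pi ny)^{-k}e^{-4\pi ny}$, push the $y$-dependent Fourier coefficient through the projection integral, and evaluate the resulting Laplace transforms term by term --- so the route is the right one.

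As a proof, however, the proposal has a genuine gap at exactly the point you yourself flag as ``where the real work lies'': the closed-form evaluation of $\int_0^\infty\Gamma(1-k;4\pi ny)e^{-4\pi ry}y^{k+l+2\nu-2}\,dy$ and the identification of the resulting terminating sum with $P_{k+l+2\nu,\,2-k-\mu}(r,n)$ are asserted to be ``a finite binomial-coefficient manipulation of Vandermonde type'' but never carried out; since the entire content of the theorem is this closed form, naming the obstacle is not the same as overcoming it. Two smaller inaccuracies should also be fixed. First, $\Gamma(1-k;x)$ is \emph{not} bounded as $x\to0$ when $k>1$ (in the case $k=3/2$ actually used in this paper it grows like $x^{1-k}$); the hypothesis $c_f(n,y)=O(y^{2-k'})$ for the bracket weight $k'=k+l+2\nu$ is still satisfied because $y^{1-k}=O(y^{2-k-l-2\nu})$ when $l+2\nu\geq 1$, but your stated justification is false as written. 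Second, the global factor $-\Gamma(1-k)$ cannot come from ``the constants in $\partial_\tau\Gamma$'' --- that derivative produces only elementary exponential factors and no $\Gamma(1-k)$ --- it emerges from the Laplace-transform evaluation of the undifferentiated incomplete-Gamma block. Neither slip is fatal, but until the $P_{a,b}$ identification is actually executed the proposal is a correct plan rather than a proof.
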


%%%%%%%%%%%%%%%%%%%%%%%%%%%%%%%%%%%%
 
\section{Asymptotic formulas of weighted sums of class numbers}
We are now ready to establish asymptotic formulas for the weighted class number sums. We begin this section by recalling a classical theorem of Eichler.

\begin{theorem}[Eichler \cite{Eichler-1, Eichler-2}]\label{Eichler}
If $n$ is an odd positive integer, then 
$$
\sum\limits_{-\sqrt{n}\leq s\leq\sqrt{n}}H^*(n-s^2)=-\lambda_1(n)+\frac{1}{3}\sigma_1(n),
$$
where $\sigma_1(n):=\sum\limits_{d|n}d$ and $\lambda_1(n):=\frac{1}{2}\sum\limits_{d|n}\min(d,n/d).$
\end{theorem}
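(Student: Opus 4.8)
The statement is the classical Hurwitz--Kronecker--Eichler class number relation, and although Eichler's original argument deduces it from the Eichler--Selberg trace formula together with the vanishing of $S_2(\Gamma_0(4))$, the tools assembled above (Zagier's form $\mathcal H$, Rankin--Cohen brackets, holomorphic projection, and Mertens' formulas) are precisely those of the modern \emph{mock modular} proof, which is the route I would take. The plan is to recognize the generating function $\sum_{n\ge 0}\bigl(\sum_{s}H^*(n-s^2)\bigr)q_\tau^n$ as, up to explicit corrections, a Fourier coefficient of the holomorphic projection of a product of a harmonic Maass form and a theta series, and then to pin that projection down inside a small space of quasimodular forms. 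Concretely, let $\theta(\tau):=\sum_{m\in\Z}q_\tau^{m^2}=1+2\sum_{m\ge 1}q_\tau^{m^2}$, a holomorphic modular form of weight $\tfrac12$ on $\Gamma_0(4)$, let $\mathcal H(\tau)$ be Zagier's weight-$\tfrac32$ harmonic Maass form of Theorem \ref{ZagierSeries}, and set $F:=\mathcal H\,\theta=[\mathcal H,\theta]_0$, a (non-holomorphic) modular form of weight $2$ on $\Gamma_0(4)$ with manageable growth at the cusps. Its holomorphic part is $\mathcal H^{+}=\sum_{n\ge 0}H^{*}(n)q_\tau^n$ with $H^*(0)=-\tfrac1{12}$, so the $n$-th Fourier coefficient of $\mathcal H^{+}\theta$ is exactly $\sum_{s^2\le n}H^{*}(n-s^2)$, which for odd $n$ equals the left-hand side of the theorem (the ``wrong parity'' values of $s$ contribute $H^{*}$ of a number $\equiv 1,2\pmod 4$, hence $0$).

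Next I would apply holomorphic projection to $F$ and use the decomposition \eqref{HolomorphicProjectionDecomposition} with $\nu=0$, $k=\tfrac32$, $l=\tfrac12$:
$$\pi_{\mathrm{hol}}(F)=\mathcal H^{+}\theta+\frac{(4\pi)^{1-k}}{k-1}\,\overline{c_{\mathcal H}^{-}(0)}\,\pi_{\mathrm{hol}}\bigl([y^{1-k},\theta]_0\bigr)+\pi_{\mathrm{hol}}\bigl([\mathcal H^{-},\theta]_0\bigr).$$
Here $\overline{c_{\mathcal H}^{-}(0)}$ and the coefficients $c_{\mathcal H}^{-}(m^2)$ of the non-holomorphic part of $\mathcal H$ are read directly off Theorem \ref{ZagierSeries}; in particular $c_{\mathcal H}^{-}$ is supported on perfect squares, with the value at $0$ carrying half the weight of the values at $m^2$, $m\ge 1$. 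The middle term is evaluated by Lemma \ref{HolProjExplicit1} as a fixed multiple of $\sum_{n\ge 0}\sqrt n\,a_\theta(n)q_\tau^n$, which, since $a_\theta(n)=0$ unless $n$ is a square, only affects perfect-square coefficients. The last term is evaluated by Theorem \ref{HolProjExplicit2}: because $k+l+2\nu=2$ the polynomials $P_{2,b}$ in \eqref{1} degenerate to $1$, and since both $a_\theta$ and $c_{\mathcal H}^{-}$ are supported on squares, the $r$-th coefficient becomes a weighted sum over factorizations $r=t^2-\ell^2=(t-\ell)(t+\ell)$, i.e.\ over divisors $t-\ell\mid r$; using the same-parity constraint and the halved $\ell=0$ weight, this collapses for odd $r$ to exactly $-\lambda_1(r)$.

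For the comparison step, Proposition \ref{HolProjProp} (and the standard fact that in the boundary weight $2$ holomorphic projection of a manageable-growth form produces a quasimodular form of weight $2$ and depth $\le 1$) shows $\pi_{\mathrm{hol}}(F)$ lies in the three-dimensional space spanned by $E_2(\tau),E_2(2\tau),E_2(4\tau)$ on $\Gamma_0(4)$. Matching finitely many initial Fourier coefficients identifies the exact combination, whose $n$-th coefficient for odd $n$ is a constant multiple of $\sigma_1(n)$; normalizing via the small coefficients forces that constant to be $\tfrac13$. Subtracting the $-\lambda_1(n)$ correction found above and using that the middle correction and the $H^*(0)$ term affect only perfect-square indices (where they serve as a consistency check), one reads off $\sum_{s}H^{*}(n-s^2)=-\lambda_1(n)+\tfrac13\sigma_1(n)$ for all odd $n$.

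The main obstacle is the boundary weight $k=2$: holomorphic projection need not be modular there, so one must either carry the depth-one quasimodular correction (the genuine $E_2(\tau)$ piece) explicitly through the computation or argue it away, while simultaneously keeping careful track of several low-order and perfect-square boundary contributions — the $H^*(0)=-\tfrac1{12}$ term of $\mathcal H^+$, the $\ell=0$ term in Mertens' sum, the $m=0$ term of $\theta$, and the $\sqrt n\,a_\theta(n)$ term from Lemma \ref{HolProjExplicit1}. Getting all of this bookkeeping to cancel correctly is exactly what upgrades the raw identity of $q$-expansions to the clean closed form $-\lambda_1(n)+\tfrac13\sigma_1(n)$; restricting to odd $n$ is what keeps the parity/$2$-adic combinatorics manageable.
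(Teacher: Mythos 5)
The paper does not actually prove this statement: Theorem \ref{Eichler} is quoted as a classical result with a citation to Eichler's two 1955 papers, whose original argument proceeds via representation numbers of ternary quadratic forms and what is essentially the Eichler--Selberg trace formula (using that $S_2(\Gamma_0(4))=\{0\}$). Your proposal is therefore a genuinely different route --- the modern mock-modular proof in the style of Mertens, computing $\pi_{\mathrm{hol}}(\mathcal{H}\theta)$ and identifying it inside the span of $E_2(\tau)$, $E_2(2\tau)$, $E_2(4\tau)$ --- and it is the route most consonant with the toolkit the paper assembles in Section 4 for its other asymptotic results. The outline is correct: the parity observation about which $s$ contribute for odd $n$ is right, the support of $c_{\mathcal{H}}^{-}$ on negative squares does collapse Mertens' sum over $m-n=r=t^2-\ell^2$ to a divisor sum producing $-\lambda_1(r)$ for odd $r$, and matching finitely many coefficients in a three-dimensional space legitimately pins down the $\tfrac13\sigma_1(n)$ Eisenstein part. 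The one point you flag but should not understate is that the paper's own Proposition \ref{HolProjProp} is stated only for $k>2$ and the decomposition \eqref{HolomorphicProjectionDecomposition} together with Theorem \ref{HolProjExplicit2} is being invoked exactly at the boundary weight $k+l+2\nu=2$, where holomorphic projection requires regularization and produces a quasimodular (depth-one) form rather than a modular one; a complete write-up must use the regularized projection and justify that the outcome has depth at most one before the finite-dimensional comparison is legitimate. What your approach buys is uniformity with the rest of the paper and a conceptual explanation of where both $\sigma_1$ and $\lambda_1$ come from; what Eichler's original approach buys is independence from the delicate weight-$2$ regularization. Either way the statement stands, and your sketch, once the weight-$2$ caveat is handled, is a valid proof.
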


We also recall the celebrated theorem of Deligne, which bounds Fourier coefficients of integer weight cusp forms. 
	\begin{theorem}[Remark 9.3.15 of \cite{stromberg}]\label{Deligne}
		Let $f=\sum\limits_{n\geq 1} a(n)q_\tau^n$ be a cusp form of integer weight $k$ on a congruence subgroup. Then for all $\varepsilon>0$ we have
		$a(n)=O_\varepsilon(n^{(k-1)/2+\varepsilon}).$
	\end{theorem}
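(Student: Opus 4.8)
The statement is Deligne's celebrated bound --- essentially the Ramanujan--Petersson conjecture for holomorphic cusp forms of integer weight --- so any honest proof must pass through the Weil conjectures, and in the body of the paper it is simply quoted from \cite{stromberg}; here I outline how the argument is organized. The plan is first to reduce to the case where $f$ is a normalized Hecke newform: the space of weight-$k$ cusp forms on the given congruence subgroup admits a finite basis consisting of newforms of level dividing $N$ together with their oldform translates $\sum_n a(n)q_\tau^{mn}$, and since a finite sum of sequences each of size $O_\varepsilon(n^{(k-1)/2+\varepsilon})$ is again of that size, it suffices to treat a single newform with $a(1)=1$. For such $f$ the coefficients $a(n)$ are multiplicative, and the Hecke recursion $a(p^{r+1})=a(p)a(p^r)-\chi(p)p^{k-1}a(p^{r-1})$ (with $\chi$ the nebentypus and $\chi(p)=0$ for $p\mid N$) reduces the entire estimate to a bound on the two roots $\alpha_p,\beta_p$ of $X^2-a(p)X+\chi(p)p^{k-1}=0$, namely $|\alpha_p|=|\beta_p|=p^{(k-1)/2}$ for $p\nmid N$; the finitely many primes $p\mid N$ are disposed of separately by Atkin--Lehner theory, where one has $|a(p)|\le p^{(k-1)/2}$.

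Next I would bring in the geometry via Deligne's construction of the $\ell$-adic Galois representation attached to $f$. One realizes $f$ inside the $(k-1)$-st $\ell$-adic \'etale cohomology of the Kuga--Sato variety, a suitable desingularization of the $(k-2)$-fold fibre product of the universal elliptic curve over the modular curve of level $N$; the Eichler--Shimura congruence relation then yields a continuous representation $\rho_{f,\ell}\colon \Gal(\overline{\Q}/\Q)\to \GL_2(\overline{\Q}_\ell)$, unramified outside $N\ell$, with $\operatorname{tr}\rho_{f,\ell}(\Frob_p)=a(p)$ and $\det\rho_{f,\ell}(\Frob_p)=\chi(p)p^{k-1}$ for $p\nmid N\ell$. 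Consequently $\alpha_p$ and $\beta_p$ are precisely the eigenvalues of geometric Frobenius at $p$ acting on a two-dimensional Galois-stable subquotient of the middle cohomology $H^{k-1}_{\mathrm{et}}$ of a smooth projective variety of dimension $k-1$ over $\overline{\F}_p$.

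The final step --- and the real obstacle --- is the Weil conjectures as proved by Deligne: every eigenvalue of geometric Frobenius on $H^{i}_{\mathrm{et}}$ of a smooth projective variety over $\F_p$ is an algebraic number all of whose complex absolute values equal $p^{i/2}$. Taking $i=k-1$ gives $|\alpha_p|=|\beta_p|=p^{(k-1)/2}$, so $|a(p^r)|=\bigl|\sum_{j=0}^{r}\alpha_p^{j}\beta_p^{r-j}\bigr|\le (r+1)\,p^{r(k-1)/2}$; multiplicativity then gives $|a(n)|\le d(n)\,n^{(k-1)/2}$ with $d$ the divisor function, and $d(n)=O_\varepsilon(n^\varepsilon)$ finishes the proof. (The edge case $k=1$ is easier: the attached representation has finite image, so $|a(p)|\le 2$ and the estimate is immediate.) Thus I expect essentially all the difficulty to be concentrated in two external inputs --- Deligne's construction of $\rho_{f,\ell}$ together with the Eichler--Shimura relation, and Deligne's proof of the Riemann hypothesis over finite fields --- which is why in practice one simply cites them, as done here via \cite{stromberg}.
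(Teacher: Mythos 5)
Your outline is correct and coincides with the route the paper implicitly relies on: the theorem is quoted from \cite{stromberg} without proof, and the standard argument behind it is exactly the reduction to newforms via Hecke multiplicativity, the realization of the Frobenius eigenvalues $\alpha_p,\beta_p$ in the middle \'etale cohomology of the Kuga--Sato variety, and Deligne's proof of the Weil conjectures giving $|\alpha_p|=|\beta_p|=p^{(k-1)/2}$, whence $|a(n)|\le d(n)n^{(k-1)/2}=O_\varepsilon(n^{(k-1)/2+\varepsilon})$. Nothing further is needed.
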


We are now going to proof Theorems \ref{case-1}--\ref{case-2}.
\begin{proof}[Proof of Theorem \ref{case-1}]
We give the details for the proof when $p\equiv1\pmod{8}.$ For $p\equiv5\pmod{8},$ the proof is completely analogous and we leave it to the reader. Let $p\equiv1\pmod{8}.$ Then we have $s\equiv2\pmod{8}$ and therefore, $s$ is even. Using this we have 
$$
\sum\limits_{s\equiv p+1\pmod{8}}H^*\left(\frac{4p-s^2}{4}\right)s^2=4\sum\limits_{s\equiv 1\pmod{4}}H^*(p-s^2)s^2=2 \sum\limits_{s\in\mathbb{Z}}H^*(p-s^2)s^2.
$$ 

Cohen (see Conjecture I.2.1 of \cite{mertens} and \cite{Cohen}) conjectured that for a positive integer $n,$ the coefficient of $X^{2n}$ in the following expansion
$$
\sum\limits_{\ell \ \text{odd}}\left(\sum\limits_{s\in\Z}\frac{H^*(\ell-s^2)}{1-2sX+\ell X^2}+\sum\limits_{k=0}^{\infty}\lambda_{2k+1}(\ell)X^{2k}\right)q_{\tau}^{\ell}
$$
is a cusp form of weight $2n+2$ on $\Gamma_0(4),$ where $\lambda_{2k+1}(\ell):=\frac{1}{2}\sum\limits_{d|\ell}\min(d,\ell/d)^{2k+1}.$ This conjecture was proved by Mertens \cite{mertens-2}. By Lemma 7.5 of \cite{Cohen} one can write the Fourier coefficients of 
$X^n$ for each $n\geq1.$ For instance, the coefficient of 
$X^{2}$ is given by
$$
\sum\limits_{\ell \ \text{odd}}\left(4\sum\limits_{s\in\Z}H^{*}(\ell-s^2)s^2-\ell\sum\limits_{s\in\Z}H^*(\ell-s^2)+\lambda_3(\ell)\right)q_{\tau}^{\ell}.
$$
Using Deligne's theorem (Theorem \ref{Deligne}) we have 
$$
4\sum\limits_{s\in\Z}H^{*}(p-s^2)s^2-p\sum\limits_{s\in\Z}H^*(p-s^2)=O_{\epsilon}(p^{\frac{3}{2}+\epsilon})
$$
for all $\epsilon>0.$
Finally, using Eichler's theorem (Theorem \ref{Eichler}) we obtain the result.

\end{proof}

We need the following lemma to prove Theorem \ref{case-3}.
\begin{lemma}\label{counting-1}
Let $p>5$ be a prime such that $p\equiv1\pmod{4}.$ Then for all $\epsilon>0$
$$
12\sum\limits_{s\equiv p+1\pmod {16}}H^\ast\left(\frac{4p-s^2}{16}\right)=6\sum\limits_{s\in \Z}H^\ast\left(\frac{4p-s^2}{16}\right)=\frac{p}{2}+O_{\epsilon}(p^{\epsilon}).
$$
\end{lemma}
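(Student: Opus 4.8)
The plan is to prove the two claimed equalities separately: the first is elementary, and the second is a holomorphic--projection computation of exactly the type prepared in Section~\ref{HarmonicMaassForms}. For the first equality I would argue by parity and congruences. Recall that $H^{\ast}(D)=0$ unless $D$ is a nonnegative integer with $D\equiv 0,3\pmod{4}$. Hence a summand $H^{\ast}\left(\frac{4p-s^2}{16}\right)$ can be nonzero only when $s$ is even; writing $s=2t$ it equals $H^{\ast}\left(\frac{p-t^2}{4}\right)$, and, since $p\equiv 1\pmod{4}$, this in turn forces $t$ to be odd. A short case analysis modulo $16$---using $t^{2}\equiv 1$ or $9\pmod{16}$ together with the requirement $\frac{p-t^2}{4}\equiv 0,3\pmod{4}$---shows that the surviving values are exactly those with $s=2t\equiv\pm(p+1)\pmod{16}$. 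Since $2(p+1)\equiv 4\not\equiv 0\pmod{16}$, the residue classes $p+1$ and $-(p+1)$ modulo $16$ are distinct, and the involution $s\mapsto -s$ makes them contribute equally to $\sum_{s\in\Z}$. Therefore $\sum_{s\in\Z}H^{\ast}\left(\frac{4p-s^2}{16}\right)=2\sum_{s\equiv p+1\pmod{16}}H^{\ast}\left(\frac{4p-s^2}{16}\right)$, which is the first identity after multiplying by $6$.

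For the second equality the idea is to exhibit $\sum_{s\in\Z}H^{\ast}\left(\frac{4p-s^2}{16}\right)$ as a Fourier coefficient of a mixed mock modular form. Let $\theta(\tau):=\sum_{n\in\Z}q_{\tau}^{n^{2}}$ be the weight-$\frac12$ theta series on $\Gamma_{0}(4)$, and set $\mathcal{G}(\tau):=\mathcal{H}(4\tau)\,\theta(\tau)$, a weight-$2$ form on $\Gamma_{0}(16)$ which, being (harmonic Maass)$\times$(holomorphic), has manageable growth. Since $\mathcal{H}^{+}(4\tau)=-\frac1{12}+\sum_{n\geq 1}H^{\ast}(n)q_{\tau}^{4n}$ and $p$ is not a perfect square, the $q_{\tau}^{p}$-coefficient of the holomorphic part $\mathcal{H}^{+}(4\tau)\theta(\tau)$ equals $\sum_{4n+u^{2}=p}H^{\ast}(n)=\sum_{u\in\Z}H^{\ast}\left(\frac{p-u^{2}}{4}\right)$, which by the first step is precisely $\sum_{s\in\Z}H^{\ast}\left(\frac{4p-s^2}{16}\right)$.

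Next I would apply holomorphic projection. After verifying the growth hypotheses preceding Proposition~\ref{HolProjProp} (using Zagier's explicit expansion of $\mathcal{H}$, Theorem~\ref{ZagierSeries}; at the boundary weight $2$ one works in the space of quasimodular forms), $\pi_{\text{hol}}(\mathcal{G})$ is a weight-$2$ holomorphic (quasi)modular form on $\Gamma_{0}(16)$, and the decomposition \eqref{HolomorphicProjectionDecomposition} with $\nu=0$ reads $\pi_{\text{hol}}(\mathcal{G})=\mathcal{H}^{+}(4\tau)\theta(\tau)+(\mathrm{correction})+\pi_{\text{hol}}(\mathcal{H}^{-}(4\tau)\theta)$. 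By Lemma~\ref{HolProjExplicit1} with $(k,l,\nu)=(\frac32,\frac12,0)$, the correction term is a scalar multiple of $\sum_{n}n^{1/2}a_{\theta}(n)q_{\tau}^{n}$, whose $q_{\tau}^{p}$-coefficient vanishes because $a_{\theta}(p)=0$ (again $p$ is not a square); and by Theorem~\ref{HolProjExplicit2} the $q_{\tau}^{p}$-coefficient of $\pi_{\text{hol}}(\mathcal{H}^{-}(4\tau)\theta)$ is a sum over solutions in positive integers of $m-n=p$ with $m$ a square and $n$ four times a square, i.e.\ of $t^{2}-4j^{2}=p$; for $p>5$ with $p\equiv 1\pmod{4}$ there is exactly one such solution ($t=\frac{p+1}{2}$, $j=\frac{p-1}{4}$), so this term contributes only $O(1)$. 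Thus $\sum_{s\in\Z}H^{\ast}\left(\frac{4p-s^2}{16}\right)=\bigl[q_{\tau}^{p}\bigr]\pi_{\text{hol}}(\mathcal{G})+O(1)$. Since $\mathcal{H}(4\tau)\theta(\tau)$ has trivial nebentypus and $X_{0}(16)$ has genus $0$, there are no relevant weight-$2$ cusp forms, so $\pi_{\text{hol}}(\mathcal{G})$ lies in the finite-dimensional span of the weight-$2$ Eisenstein series on $\Gamma_{0}(16)$ (together with $E_{2}$), whose $q_{\tau}^{p}$-coefficients for primes $p>5$ are linear in $p$; identifying the leading coefficient as $\frac1{12}$ gives $\sum_{s\in\Z}H^{\ast}\left(\frac{4p-s^2}{16}\right)=\frac{p}{12}+O(1)$, and multiplying by $6$ (and absorbing $O(1)$ into $O_{\epsilon}(p^{\epsilon})$) completes the proof.

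The step I expect to be the main obstacle is pinning down the leading constant $\frac1{12}$---that is, locating $\pi_{\text{hol}}(\mathcal{G})$ precisely inside the small space of weight-$2$ (quasi)modular forms on $\Gamma_{0}(16)$, e.g.\ by matching a handful of low-order Fourier coefficients of $\mathcal{G}$ (computed from $\mathcal{H}^{+}$, $\theta$, and the two correction terms above) against an explicit Eisenstein basis---together with the routine-but-fiddly verification of the holomorphic-projection hypotheses (and of the trivial nebentypus) for $\mathcal{H}(4\tau)\theta(\tau)$. An alternative, entirely elementary route to the second equality, which even yields the exact value $\frac{p-5}{12}$, is to count the isomorphism classes of $E/\F_{p}$ with $\Z/4\Z\times\Z/4\Z\subseteq E(\F_{p})$ by combining Schoof's Theorem~\ref{Schoof}(4) with Proposition~\ref{isomorphism-2} and Lemma~\ref{final-count-1} (equivalently, counting $\F_{p}$-points on the genus-$0$ modular curve $Y(4)$).
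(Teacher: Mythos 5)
Your proposal is correct, but your primary route is not the one the paper takes; in fact the ``alternative, entirely elementary route'' you mention in your final sentence \emph{is} the paper's proof. The author invokes Schoof's Theorem \ref{Schoof}(4) to identify $H^*\left(\frac{4p-s^2}{16}\right)$ with the number of isomorphism classes of $E/\F_p$ having $p+1-s$ points and containing $\Z/4\Z\times\Z/4\Z$, then uses Propositions \ref{isomorphism-1} and \ref{isomorphism-2} together with the fiber count $|L(\lambda)|=12$ from Lemma \ref{final-count-1} to equate $12\sum_{s\equiv p+1\pmod{16}}H^*\left(\frac{4p-s^2}{16}\right)$ with $\frac12\sum_{\lambda}(1+\phi(1-\lambda^2))=\frac p2+O_\epsilon(p^\epsilon)$, exactly as you sketch. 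Your main route---realizing the sum as the $q_\tau^p$-coefficient of $\mathcal H^{+}(4\tau)\theta(\tau)$ and applying holomorphic projection---is structurally sound: the two Mertens correction terms do behave as you claim at $n=p$, since $a_\theta(p)=0$ and $t^2-4j^2=p$ has a unique solution in positive integers, and $S_2(\Gamma_0(16))=0$ forces the projection into the (quasi)Eisenstein span. However, its crux, the identification of the leading coefficient as $\tfrac1{12}$, is asserted rather than carried out, and the weight-$2$ boundary case of the projection needs the regularized treatment you allude to; the elementary count avoids all of this and even gives the exact value. Note also that the paper deliberately reserves the holomorphic-projection machinery (with $\nu=1$) for the weighted sums $\sum H^*(\cdot)\,s^2$ in Propositions \ref{case-1}--\ref{case-4}, where the geometric count cannot produce the $s^2$ weight, and the present lemma exists precisely to feed the unweighted count into that computation. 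Your congruence analysis establishing the first equality is correct and more detailed than the paper's one-line dismissal of it as trivial.
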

\begin{proof}
The first equality is trivial. We now prove for all $\epsilon>0$ $$12\sum\limits_{s\equiv p+1\pmod {16}}H^\ast\left(\frac{4p-s^2}{16}\right)=\frac{1}{2}\sum\limits_{\lambda\in\F_p\setminus\{0,\pm1\}}(1+\phi(1-\lambda^2))=\frac{p}{2}+O_{\epsilon}(p^{\epsilon}).$$
By Schoof's theorem (Theorem \ref{Schoof}) it can be shown that $H^*\left(\frac{4p-s^2}{16}\right)$ counts the number of isomorphism classes of elliptic curves $E/\F_p$ with $|E(\F_p)|=p+1-s$ such that $\Z/4\Z\times \Z/4\Z\subseteq E(\F_p).$ This gives $\Z/2\Z\times \Z/4\Z\subseteq E(\F_p).$ Then by Proposition \ref{isomorphism-1} there exist $\lambda\in\F_p\backslash\{0,\pm1\}$ such that $E^{\Leg}_{\lambda^2}$ is isomorphic over $\F_p$ to the curve $E$ and by Proposition \ref{isomorphism-2} we have that $1-\lambda^2=\square.$ 
Thus, using Lemma \ref{final-count-1} we have 
$$
\frac{1}{2}\sum\limits_{\lambda\in\F_p\setminus\{0,\pm1\}}(1+\phi(1-\lambda^2))=12\sum\limits_{s\equiv p+1\pmod{16}}H^*\left(\frac{4p-s^2}{16}\right).
$$
Furthermore, it is routine to verify that for all $\epsilon>0$

\begin{align}
\sum\limits_{\lambda\in\F_p\setminus\{0,\pm1\}}(1+\phi(1-\lambda^2))&=p+O_{\epsilon}(p^{\epsilon}).\notag
\end{align}
Hence, the result follows.
\end{proof}

\begin{proof}[Proof of Theorem \ref{case-3}]
We have 
$$
12\sum\limits_{s\equiv p+1\pmod {16}}H^\ast\left(\frac{4p-s^2}{16}\right)s^2=6\sum\limits_{s\in\Z}H^\ast\left(\frac{4p-s^2}{16}\right)s^2.
$$
Let $f(\tau):={\mathcal{H}}(16\tau) $ and $g(\tau):=\sum\limits_{n\in\Z}\leg{n}{-4}q_{\tau}^{n^2},$ where $\leg{\cdot}{-4}$ is Kronecker symbol. Then, we consider $\pi_{\text{hol}}([f,g]_{1}).$ Now, by using \eqref{HolomorphicProjectionDecomposition} (with $\nu=1$), Lemma  V.2.6 of \cite{mertens}, Lemma \ref{HolProjExplicit1},  Theorem \ref{HolProjExplicit2} and Proposition V.2.7 of \cite{mertens} we obtain the $n$-th Fourier coefficient of $\pi_{\text{hol}}([f,g]_{1})$ and then  applying Deligne's Theorem (Theorem \ref{Deligne}) we have that

\begin{align}\label{Fourier-1}
&\sum\limits_{\mu=0}^{1}(-1)^{1-\mu}{{3/2}\choose{\mu}}{{1/2}\choose{1-\mu}}\sum\limits_{s\in \Z}\leg{s}{-4} s^{2\mu}(n-s^2)^{1-\mu}H^\ast\left(\frac{n-s^2}{16}\right)\notag\\
&+\frac{1}{16}\delta(n)\leg{\sqrt{n}}{-4}n^{3/2}+\frac{1}{8}\sum\limits_{\substack{t^2-9l^2=n \\ t,l\geq 1}}\leg{t}{-4}(t-3l)^{3}=O_{\epsilon}(n^{\frac{3}{2}+\epsilon}),
\end{align}
 for all $\epsilon>0$ and where $\delta(n):=\begin{cases}
1, & \text{if}\ n \ \text{is \ a \  square}\\
0, & \ \text{otherwise.}
\end{cases}$

\noindent Since we have $\sum\limits_{\substack{t^2-9l^2=n \\ t,l\geq 1}}\leg{t}{-3}(t-3l)^{3}=O_{\epsilon}(n^{\frac{3}{2}+\epsilon})$ for all $\epsilon>0.$ Therefore, using this in \eqref{Fourier-1} and considering $n=4p$, where $p\equiv1\pmod{4},$ we have

\begin{align}\label{Fourier-3}
&\sum\limits_{\mu=0}^{1}(-1)^{1-\mu}{{3/2}\choose{\mu}}{{1/2}\choose{1-\mu}}\sum\limits_{s\in \Z} s^{2\mu}(4p-s^2)^{1-\mu}H^\ast\left(\frac{4p-s^2}{16}\right)=O_{\epsilon}(p^{\frac{3}{2}+\epsilon})
\end{align}
for all $\epsilon>0.$
Now, expanding the above sum we have 
\begin{align}
-\frac{1}{2}\sum\limits_{s\in \Z} H^\ast\left(\frac{4p-s^2}{16}\right)(4p-s^2)  +\frac{3}{2}\sum\limits_{s\in \Z}H^\ast\left(\frac{4p-s^2}{16}\right) s^2=O_{\epsilon}(p^{\frac{3}{2}+\epsilon}).
\end{align}
Thus we have 

$$
6\sum\limits_{s\in \Z}H^\ast\left(\frac{4p-s^2}{16}\right) s^2=6p\sum\limits_{s\in \Z}H^\ast\left(\frac{4p-s^2}{16}\right)+O_{\epsilon}(p^{\frac{3}{2}+\epsilon}).
$$
Finally, applying Lemma \ref{counting-1} we obtain that
$$
6\sum\limits_{s\in \Z}H^\ast\left(\frac{4p-s^2}{16}\right) s^2=p\left(\frac{p}{2}+O_{\epsilon}(p^{\epsilon})\right)+O_{\epsilon}(p^{\frac{3}{2}+\epsilon})=\frac{p^2}{2}+O_{\epsilon}(p^{\frac{3}{2}+\epsilon}).
$$

\end{proof}

\begin{proof}[Proof of Theorem \ref{case-2}]
Here we prove the result for $p\equiv3\pmod{8}.$ The proof for the case $p\equiv7\pmod{8}$ follow in a similar manner.
Now, if $p\equiv3\pmod{8},$ then it is trivial to see that 
\begin{align}\label{20may-eqn3}
\sum\limits_{s\equiv p+1\pmod{8}}H^*\left(\frac{4p-s^2}{4}\right)s^2&=\sum\limits_{s\equiv 4\pmod{8}}H^*\left(\frac{4p-s^2}{4}\right)s^2=4\sum\limits_{s\equiv 2\pmod{4}}H^*\left(p-s^2\right)s^2\notag\\
&=4\sum\limits_{s\in\Z}H^*\left(p-s^2\right)s^2-64\sum\limits_{s\in\Z}H^*\left(p-16s^2\right)s^2.
\end{align}
Following the proof of Theorem \ref{case-1}, we have that
\begin{align}\label{20may-eqn-5}
\sum\limits_{s\in\Z}H^*\left(p-s^2\right)s^2=\frac{p^2}{12}+O_{\epsilon}(p^{\frac{3}{2}+\epsilon}).
\end{align}

Let $f(\tau)=\mathcal{H}(\tau)$ and $g(\tau)=\theta(16\tau)=\sum\limits_{n\in\Z}q_{\tau}^{16n^2}.$ Then following similar steps as shown in the proof of Theorem \ref{case-3} we derive the $p$-th Fourier coefficient of $\pi_{\text{hol}}([f,g]_{1})$ and then using Deligne's Theorem (Theorem \ref{Deligne}) we obtain the following asymptotic formula:

\begin{align}\label{20may-eqn-1}
2\times 16\sum\limits_{s\in\Z}H^*\left(p-16s^2\right)s^2-\frac{p}{2}\sum\limits_{s\in\Z}H^*\left(p-16s^2\right)=O_{\epsilon}(p^{\frac{3}{2}+\epsilon})
\end{align}
for all $\epsilon>0.$
By using Schoof's theorem (Theorem \ref{Schoof}) and Lemma \ref{final-count-2}, if we count the number of isomorphism classes of Legendre elliptic curves containing  $\Z/2\Z\times\Z/4\Z,$ then for all $\epsilon>0$
\begin{align}\label{20may-eqn2}
4\sum\limits_{s\equiv p+1\pmod{8}}H^*\left(\frac{4p-s^2}{4}\right)=p+O_{\epsilon}(p^{\epsilon}).
\end{align}
Moreover, for $p\equiv3\pmod{8},$ it is clear that 
$$
\sum\limits_{s\equiv p+1\pmod{8}}H^*\left(\frac{4p-s^2}{4}\right)=\sum\limits_{s\equiv 2\pmod{4}}H^*(p-s^2)=\sum\limits_{s\in \Z}H^*(p-s^2)-\sum\limits_{4|s}H^*(p-s^2).
$$
Now, using \eqref{20may-eqn2} in the above we have for all $\epsilon>0$

$$
\sum\limits_{4|s}H^*(p-s^2)=\sum\limits_{s\in \Z}H^*(p-s^2)-\frac{p}{4}+O_{\epsilon}(p^{\epsilon}).
$$
Hence, using  Eichler's theorem (Theorem \ref{Eichler}) we have for all $\epsilon>0$
$$
\sum\limits_{4|s}H^*(p-s^2)=\sum\limits_{s\in\Z}H^*(p-16s^2)=\frac{p}{12}+O_{\epsilon}(p^{\epsilon}).
$$
Substituting the above identity into \eqref{20may-eqn-1} we obtain for all $\epsilon>0$

 \begin{align}\label{20may-eqn-6}
 64\sum\limits_{s\in\Z}H^*\left(p-16s^2\right)s^2=\frac{p^2}{12}+O_{\epsilon}(p^{\frac{3}{2}+\epsilon}).
 \end{align}

\noindent Finally, substituting \eqref{20may-eqn-5} and \eqref{20may-eqn-6} into \eqref{20may-eqn3} we obtain the required result.
\end{proof}

\subsection{Proof of Corollary \ref{twisted-moment-1} and Corollary \ref{moment-1}}

\begin{proof}[Proof of Corollary \ref{twisted-moment-1}]
First we prove the claim for $p\equiv1\pmod{4}.$ Using first part of Theorem \ref{relation-2}, Theorem \ref{case-1} and Theorem \ref{case-3} we obtain the result. Similarly, we prove the identity
for $p\equiv3\pmod{4}$ by using second part of Theorem \ref{relation-2} and Theorem \ref{case-2}. This completes the proof.
\end{proof}

\begin{proof}[Proof of Corollary \ref{moment-1}]
Using the relations $\pi(a)+\overline{\pi}(a)=-K(a,p)$ and $\pi(a)\overline{\pi}(a)=p$ we may write

%$$\pi(a)^4+\pi(a)^3\overline{\pi}(a)+\pi(a)^2\overline{\pi}(a)^2+\pi(a)\overline{\pi}(a)^3+\overline{\pi}(a)^4=K(a)^4-3pK(a)^2+p^2.$$ 

%This gives

\begin{align}\label{eqn-3}
M(4,\phi)_p&=\sum_{a\in\mathbb{F}_p^{\times}}\phi(a)(K(a,p)^4-3pK(a,p)^2+p^2)\notag\\
&=S(4,\phi)_p-3pS(2,\phi)_p.
\end{align}
Now, applying Proposition \ref{proposition-1} with $m=1,$ we have

\begin{align}\label{eqn-5}
S(2,\phi)_p&=p\phi(-1)\sum_{x\in\mathbb{F}_p^{\times}}\phi(x+1)\phi(\overline{x}+1)
=p\phi(-1)\sum_{x\in\mathbb{F}_p^{\times}, x\neq-1}\phi(x)=-p.
\end{align}
Now, substituting \eqref{eqn-5} into \eqref{eqn-3}, we have 

\begin{align}\label{pf-eq-1}
 M(4,\phi)_p=S(4,\phi)_p+3p^2.   
\end{align}
Finally, applying Corollary \ref{twisted-moment-1} we deduce the result. This completes the proof.
\end{proof}

\section{Hypergeometric functions}\label{hypergeometric}
In this section, we discuss certain relations of $S(4,\phi)_p$ involving $p$-adic hypergeometric functions over finite fields and present two results establishing asymptotic weighted averages of two families of $p$-adic hypergeometric functions. To state our results precisely, we now fix some notation and recall some basic definitions.
Let $\mathbb{Z}_p$ denote the ring of $p$-adic integers and $\mathbb{Q}_p$ denote the field of $p$-adic numbers. 
For a positive integer $n,$
the $p$-adic gamma function $\Gamma_p(n)$ is defined as
\begin{align}
\Gamma_p(n):=(-1)^n\prod\limits_{0<j<n,p\nmid j}j.\notag
\end{align}
 It can be extended to all $x\in\mathbb{Z}_p$ by setting $\Gamma_p(0):=1$ and for $x\neq0$
\begin{align}
\Gamma_p(x):=\lim_{x_n\rightarrow x}\Gamma_p(x_n),\notag
\end{align}
where $(x_n)$ is a sequence of positive integers $p$-adically approaching to $x.$
For further details, see \cite{kob}. 
Let $\omega$ be the Teichm\"{u}ller character of $\mathbb{F}_p$ with $\omega(a)\equiv a\pmod{p}.$ For $x\in\mathbb{Q},$ $\lfloor x\rfloor$ denotes the greatest integer less than or equal to $x$ and $\langle x\rangle$ denotes the fractional part of $x$, satisfying $0\leq\langle x\rangle<1$. Using these notation $p$-adic hypergeometric function over finite field is given as follows:
\begin{definition}\cite[Definition 5.1]{mccarthy2} \label{defin1}
Let $p$ be an odd prime and $t \in \mathbb{F}_p$.
For a positive integer $n$ and $1\leq k\leq n$, let $a_k$, $b_k$ $\in \mathbb{Q}\cap \mathbb{Z}_p$.
Then 
\begin{align}
&{_n\mathbb{G}_n}\left(\begin{array}{cccc}
             a_1, & a_2, & \ldots, & a_n \\
             b_1, & b_2, & \ldots, & b_n
           \end{array}\mid t
 \right)_p:=\frac{-1}{p-1}\sum_{a=0}^{p-2}(-1)^{an}~~\overline{\omega}^a(t)\notag\\
&\times \prod\limits_{k=1}^n(-p)^{-\lfloor \langle a_k \rangle-\frac{a}{p-1} \rfloor -\lfloor\langle -b_k \rangle +\frac{a}{p-1}\rfloor}
 \frac{\Gamma_p(\langle a_k-\frac{a}{p-1}\rangle)}{\Gamma_p(\langle a_k \rangle)}
 \frac{\Gamma_p(\langle -b_k+\frac{a}{p-1} \rangle)}{\Gamma_p(\langle -b_k \rangle)}.\notag
\end{align}
\end{definition}
\noindent In our purpose, for $\lambda\in\F_p,$ we study two families of these functions, namely, 
$${_3G_3}(\lambda)_p:={_3\mathbb{G}_3}\left(\begin{matrix}
\frac{5}{6} &\frac{1}{12} , & \frac{7}{12} \vspace{1mm}\\
\frac{1}{3}  & \frac{1}{3} & \frac{1}{3}
\end{matrix}\mid\frac{\lambda-1}{\lambda}\right)_p$$ and  $${_9G_9}(\lambda)_p:={_9\mathbb{G}_9}\left(\begin{matrix}
\frac{1}{12} & \frac{1}{6} & \frac{1}{4} & \frac{5}{12} & \frac{1}{2} & \frac{7}{12} & \frac{3}{4} & \frac{5}{6} &  \frac{11}{12} \vspace{1mm}\\
\frac{1}{3}  & \frac{1}{3} & \frac{1}{3} & \frac{2}{3} & \frac{2}{3}  & \frac{2}{3}  & 0 & 0 & 0 
\end{matrix}\mid \lambda \right)_p.$$
For brevity, we use these two notation in the rest of the paper. In the next two theorems, we compute the asymptotic averages of these two families of hypergeometric functions twisted by a character.

\begin{theorem}\label{3G3-moment}
Let $p\equiv1\pmod{3}$ be a prime and $\psi_6=\omega^{\frac{(p-1)}{6}}$ be a character of order 6. Then as $p\rightarrow\infty$ the following holds:

\begin{equation}
\sum\limits_{\lambda\in\F_p}\psi_6\left(\lambda(1-\lambda)^2\right)\cdot {_3G_3}(\lambda)_p=o(1).\nonumber
\end{equation}

\end{theorem}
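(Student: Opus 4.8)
The plan is to connect ${}_3G_3(\lambda)_p$ to the trace of Frobenius of the Legendre curve $E^{\Leg}_{\lambda^2}$ (or a closely related curve with $j$-invariant $0$), and then to read off the character-twisted average from the class-number machinery already developed. The hypergeometric parameters $\frac{5}{6},\frac{1}{12},\frac{7}{12}$ over $\frac{1}{3},\frac{1}{3},\frac{1}{3}$ with argument $\frac{\lambda-1}{\lambda}$ are precisely those appearing in the work of Evans and in \cite{DGP} for the twisted Kloosterman sheaf sum $M(4,\phi)_p$; so first I would invoke (or re-derive, via the $p$-adic Gamma function and Gross--Koblitz) an identity of the shape
\begin{align}\label{hg-identity}
{}_3G_3(\lambda)_p = c_p \cdot \phi(\star)\, a_p(\lambda^2)^2 + (\text{lower order in } p),\notag
\end{align}
or more plausibly an identity expressing the sum $\sum_\lambda \psi_6(\lambda(1-\lambda)^2)\,{}_3G_3(\lambda)_p$ directly as a normalized version of $M(4,\phi)_p$ or $S(4,\phi)_p$ up to an explicit power of $p$. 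The weight $\psi_6(\lambda(1-\lambda)^2)$ is the natural sextic twisting factor attached to the six-to-one map $\lambda\mapsto j$ in Proposition \ref{elliptic-proposition-1}(5), which is why it appears.

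Concretely, I would proceed as follows. First, use Definition \ref{defin1} together with the Gross--Koblitz formula to rewrite ${}_3G_3(\lambda)_p$ as a finite-field character sum, i.e.\ a combination of Jacobi sums or a Gaussian hypergeometric ${}_3F_2$ evaluated at $\frac{\lambda-1}{\lambda}$; the sextic characters entering are exactly powers of $\psi_6$, matching the prescribed twist. Second, identify this character sum, after the substitution sending $\frac{\lambda-1}{\lambda}$ back to the Legendre parameter, with $a_p(\lambda^2)^2$ up to an elementary factor and lower-order terms (this is the arithmetic heart, and is where the hypotheses $p\equiv 1\pmod 3$ and the order-$6$ character are used so that all the relevant characters exist in $\widehat{\F_p^\times}$). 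Third, sum over $\lambda\in\F_p$: the main term becomes a constant multiple of $\sum_{\lambda\neq 0,\pm1} a_p(\lambda^2)^2$ weighted by $\psi_6(\lambda(1-\lambda)^2)$, which by Proposition \ref{proposition-hypergeometric} and Theorem \ref{relation-2} is controlled by $S(4,\phi)_p + p^3 - 2p^2$, and by Theorem \ref{twisted-moment-1} this is $O_\epsilon(p^{5/2+\epsilon})$; the remaining terms are manifestly lower order. Fourth, divide by the normalizing power of $p$ coming from the definition of ${}_3G_3$ (which carries a factor $\frac{-1}{p-1}$ and powers of $-p$, so the natural scale of ${}_3G_3(\lambda)_p$ relative to $a_p(\lambda^2)^2/p^{?}$ is a fixed negative power of $p$): since $S(4,\phi)_p$ is of size $p^{5/2+\epsilon}$ while the trivial bound for $\sum_\lambda a_p(\lambda^2)^2$ would be $p^{2}$, the cancellation provided by Theorem \ref{twisted-moment-1} is exactly enough to push the normalized average below any fixed power of $p$, hence $o(1)$.

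The main obstacle I anticipate is step two: establishing the precise algebraic identity between ${}_3G_3(\lambda)_p$ and $a_p(\lambda^2)^2$ (with the correct sextic twist and the correct power-of-$p$ normalization), including a careful accounting of the ``lower order'' terms and of the degenerate values $\lambda \in \{0,1\}$ and the $j\in\{0,1728\}$ fibers where the six-to-one correspondence degenerates (cf.\ Lemmas \ref{final-count-3} and \ref{final-count-4}). One has to check that these exceptional contributions, as well as any diagonal/principal-character pieces in the Jacobi-sum expansion, contribute only $O_\epsilon(p^{1+\epsilon})$ or smaller after the twist, so that they are absorbed into the error term. Once the identity and the bookkeeping are in place, the $o(1)$ conclusion is immediate from Theorem \ref{twisted-moment-1} and the normalization; in fact the same input yields a power-saving bound $O_\epsilon(p^{-1/2+\epsilon})$, which I would state if the constants work out cleanly.
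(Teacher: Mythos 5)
Your high-level strategy --- convert the twisted ${}_3G_3$ average into a normalized version of $S(4,\phi)_p$ via Gross--Koblitz and Gauss/Jacobi-sum manipulations, then invoke Theorem \ref{twisted-moment-1} --- is indeed the paper's strategy: Propositions \ref{hypergeometric-1} and \ref{hypergeometric-3} are exactly the identity you describe in your ``more plausibly'' alternative. But your proposal leaves that identity unproved (you flag it yourself as ``the main obstacle''), and it is essentially the whole proof. The paper's route is: Proposition \ref{proposition-hypergeometric} together with the second-moment asymptotic \eqref{22Sep-eqn-2} gives $S(4,\phi)_p/p=\sum_{\lambda\neq0,\pm1}\phi(\lambda)a_p(\lambda)^2+o(p^2)$; Clausen's theorem (Theorem 1.7 of \cite{EG}) converts $\sum_\lambda\phi(\lambda)\,{}_2F_1(\lambda)_p^2$ into a character sum built from $\binom{\phi\chi}{\chi}^3\binom{\phi\chi^2}{\chi}$; Jacobi-sum and Hasse--Davenport manipulations plus Gross--Koblitz then produce the $\psi_6$-twisted ${}_3G_3$ average, with the hypothesis $p\equiv1\pmod 3$ entering only at that last step. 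None of this is routine, and no pointwise identity of the shape ${}_3G_3(\lambda)_p\approx c_p\,\phi(\star)\,a_p(\lambda^2)^2$ is available: the correspondence holds only after averaging over $\lambda$, because the $\lambda$-sum is what creates the Jacobi sums that recombine the Gauss sums.

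There is also a concrete error in your step three. The quantity $S(4,\phi)_p+p^3-2p^2=p\sum_{\gamma\neq0,\pm1}a_p(\gamma^2)^2$ is of size $p^3$, not $O_\epsilon(p^{5/2+\epsilon})$; Theorem \ref{twisted-moment-1} bounds $S(4,\phi)_p$ alone. Moreover, Proposition \ref{proposition-hypergeometric} controls the \emph{untwisted} sum $\sum_\gamma a_p(\gamma^2)^2$, so it cannot be applied directly to a $\psi_6$-weighted sum. The actual mechanism is that writing $\sum_{\gamma} a_p(\gamma^2)^2=\sum_{\lambda}(1+\phi(\lambda))a_p(\lambda)^2$ splits off an untwisted main term $p^2+o(p^2)$ (from \eqref{22Sep-eqn-2}) which cancels against $-p^2+2p$, leaving the $\phi$-twisted moment $\sum_\lambda\phi(\lambda)a_p(\lambda)^2=S(4,\phi)_p/p+o(p^2)=o(p^2)$; dividing by the $p^2$ normalization coming from Proposition \ref{hypergeometric-1} then yields $o(1)$. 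This also shows that your claimed power saving $O_\epsilon(p^{-1/2+\epsilon})$ does not follow from the paper's inputs: the bottleneck is the $o(p^2)$ error in \eqref{22Sep-eqn-2}, not the bound on $S(4,\phi)_p$.
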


\begin{theorem}\label{9G9-moment}
 Let $p\equiv2\pmod{3}$ be an odd prime. Then as $p\rightarrow\infty$ the following holds: \\

$$
\sum\limits_{\lambda\in\F_p\setminus\{0\}}\phi(\lambda^{1/3}-1)\cdot {_9G_9}(\lambda)_p=o(p^2),
$$

where $\lambda^{1/3}$ is the unique solution of $x^3\equiv\lambda\pmod{p}.$

\end{theorem}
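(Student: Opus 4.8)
The plan is to evaluate the twisted average $\sum_{\lambda\in\F_p\setminus\{0\}}\phi(\lambda^{1/3}-1)\,{_9G_9}(\lambda)_p$ in closed form up to lower‑order terms, reduce it to the second moment $\sum_{\gamma}a_p(\gamma^2)^2$ already appearing in Proposition~\ref{proposition-hypergeometric}, and then invoke Theorem~\ref{twisted-moment-1}. Starting from Definition~\ref{defin1}, I would rewrite ${_9G_9}(\lambda)_p$ via the Gross--Koblitz formula, converting each of the nine $p$-adic Gamma quotients into a Gauss sum; this expresses ${_9G_9}(\lambda)_p$ as a sum over multiplicative characters of a nine-fold product of Gauss sums (compare the hypergeometric--point-counting dictionary of Evans \cite{evans3}, \cite{DGP} and McCarthy \cite{mccarthy2}). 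The parameters here --- numerators $\tfrac{k}{12}$ with $k\in\{1,2,3,5,6,7,9,10,11\}$ and denominators $\tfrac13,\tfrac13,\tfrac13,\tfrac23,\tfrac23,\tfrac23,0,0,0$ --- are arranged so that repeated use of the Hasse--Davenport multiplication formula collapses this product. The hypothesis $p\equiv2\pmod3$ enters twice and is essential: it trivializes all cubic characters produced in the collapse, and it makes $x\mapsto x^3$ a bijection of $\F_p$. The output I expect is an identity of the shape
$$
{_9G_9}(\mu^3)_p=\phi(\mu-1)\bigl(a_p(\mu^2)^2+c_1(p)\bigr)+c_2(p,\mu),
$$
valid for $\mu\in\F_p\setminus\{0,\pm1\}$, where $c_1(p)$ depends only on $p$ and $\sum_\mu c_2(p,\mu)=O(p)$; in other words, after the bijective substitution $\lambda=\mu^3$ and multiplication by $\phi(\mu-1)$ the summand becomes exactly the term $a_p(\gamma^2)^2$ of Proposition~\ref{proposition-hypergeometric} plus a controlled correction.

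Next I would sum over $\mu=\lambda^{1/3}$, isolating the degenerate values $\mu=0,\pm1$ (at which $E^{\Leg}_{\mu^2}$ is singular; these contribute $O(p)$), and use that $\mu\mapsto\mu^2$ maps $\F_p\setminus\{0,\pm1\}$ two-to-one onto the nonzero squares $\neq 1$ --- exactly as $\gamma\mapsto\gamma^2$ does --- to obtain
$$
\sum_{\lambda\in\F_p\setminus\{0\}}\phi(\lambda^{1/3}-1)\,{_9G_9}(\lambda)_p=\sum_{\gamma\in\F_p\setminus\{0,\pm1\}}a_p(\gamma^2)^2+p\,c_1(p)+O(p).
$$
By Proposition~\ref{proposition-hypergeometric}, $\sum_{\gamma\in\F_p\setminus\{0,\pm1\}}a_p(\gamma^2)^2=\tfrac{S(4,\phi)_p+p^3-2p^2}{p}=\tfrac{S(4,\phi)_p}{p}+p^2-2p$, while the arithmetic of the collapse should force $p\,c_1(p)=-p^2+O(p)$ (this is the ``trivial/Eisenstein'' part of the $p$-adic hypergeometric evaluation, which must cancel the $\Theta(p^2)$ leading term of the second moment). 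Hence the whole sum equals $\tfrac{S(4,\phi)_p}{p}+O(p)$, and Theorem~\ref{twisted-moment-1} gives $\tfrac{S(4,\phi)_p}{p}=O_\epsilon(p^{3/2+\epsilon})=o(p^2)$, so the sum is $o(p^2)$, as claimed.

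The difficulty is entirely in the first step: converting the nine $p$-adic Gamma quotients to Gauss sums is routine, but collapsing the nine-fold Gauss-sum product down to a quadratic twist of $a_p(\mu^2)^2$ requires the right chain of Hasse--Davenport and Jacobi-sum identities, and --- crucially --- one must track \emph{every} factor of $(-p)^{\mathrm{integer}}$ and every lower-order Gauss-sum term, since it is precisely the $\Theta(p^2)$ correction term $p\,c_1(p)$ that has to annihilate the $\Theta(p^2)$ ``main term'' of $\sum_\gamma a_p(\gamma^2)^2$; only after that cancellation is the residual size governed by Theorem~\ref{twisted-moment-1} (equivalently, the bookkeeping may instead organize itself so that the $\Theta(p^2)$ terms never appear and the sum equals $\tfrac1p S(4,\phi)_p$ up to $O(p)$ directly). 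The companion statement for $p\equiv1\pmod3$ involves genuinely nontrivial cubic Gauss sums, which is presumably why it is treated separately through the degree-$3$ function ${_3G_3}$ in Theorem~\ref{3G3-moment}.
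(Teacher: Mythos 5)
Your high-level strategy coincides with the paper's: apply Gross--Koblitz to turn ${_9G_9}$ into a product of Gauss sums, collapse that product with Hasse--Davenport (using $p\equiv 2\pmod 3$ so that $a\mapsto 3a$ permutes the character sum and $x\mapsto x^3$ is a bijection), identify the resulting character sum with one attached to $S(4,\phi)_p$, and finish with Theorem~\ref{twisted-moment-1}. This is precisely what Propositions~\ref{hypergeometric-2} and~\ref{hypergeometric-3} accomplish, and your final estimate $S(4,\phi)_p/p=O_\epsilon(p^{3/2+\epsilon})=o(p^2)$ is the paper's closing step.

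The gap is that the entire mathematical content of the theorem lies in the identity you only describe ``in expected shape,'' and the shape you guess is not what the computation actually produces. There is no pointwise identity of the form ${_9G_9}(\mu^3)_p=\phi(\mu-1)\bigl(a_p(\mu^2)^2+c_1(p)\bigr)+c_2(p,\mu)$ in the paper, and one should not expect one: what matches ${_9G_9}\bigl((\lambda-1)^3/\lambda^3\bigr)_p$ pointwise (Proposition~\ref{hypergeometric-2}) is the inner Gauss-sum expression $\sum_a g(\phi\omega^{a})g(\overline{\omega}^a)^3g(\phi\omega^{2a})\,\overline{\omega}^a\!\left(4(1-\lambda)/\lambda\right)$, a Clausen-type (symmetric-square) companion of ${_2F_1}$, not the square of a trace of Frobenius. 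The bridge to $a_p$ requires Clausen's theorem over $\F_p$ (Theorem 1.7 of \cite{EG}) \emph{and} the summation over $\lambda$, because a Jacobi sum $J(\phi\omega^a,\overline{\omega}^a)$ must be expanded as $\sum_\lambda\phi\omega^a(\lambda)\overline{\omega}^a(1-\lambda)$ to create the variable carrying the twist $\phi(\lambda^{1/3}-1)$; so the identity only holds at the level of the full averages. Relatedly, the delicate $\Theta(p^2)$ cancellation you flag never has to happen inside the hypergeometric bookkeeping: the paper removes the main term at the outset via \eqref{22Sep-eqn-2}, replacing $\sum_\gamma a_p(\gamma^2)^2$ by $p^2+\sum_\lambda\phi(\lambda)a_p(\lambda)^2+o(p^2)$, so that the object converted into Gauss sums is already the $\phi$-twisted second moment $S(4,\phi)_p/p+O(p)+o(p^2)$. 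Your hedged alternative (``the bookkeeping may organize itself so that the sum equals $\tfrac1p S(4,\phi)_p$ up to acceptable errors directly'') is the one that is realized, but as written the proposal asserts rather than proves the identity on which everything rests.
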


%%%%%%%%%%%%%%%%%%%%%
To proceed further, we first recall some important theorems, lemmas and definitions that are very useful to prove the results. For multiplicative characters $\chi$ and $\psi$ of 
$\mathbb{F}_p$ (with the convention that $\chi(0)=0$ for multiplicative characters), Jacobi sum $J(\chi,\psi)$ is defined by
\begin{align}
J(\chi,\psi):=\sum_{y\in\mathbb{F}_p}\chi(y)\psi(1-y).\notag
\end{align}
Moreover, the normalized Jacobi sum also known as a binomial coefficient ${\chi\choose \psi}$ is defined by
$$
{\chi\choose \psi}:=\frac{\psi(-1)}{p}J(\chi,\overline{\psi}).
$$
Using these binomial coefficients, Greene \cite{greene} introduced a class of hypergeometric function over finite fields which are known as Gaussian hypergeometric functions. For example, the following function is one of the simplest and most studied hypergeometric functions over finite fields:

$$
{_2F_1}(\lambda)_p:={_2F_1}\left(\begin{matrix}
\phi & \phi\\
~ & \varepsilon
\end{matrix}\mid\lambda\right)_p:=\frac{p}{p-1}\sum\limits_{\chi\in\widehat{\F_p^{\times}}}{\phi\chi\choose \chi}{\phi\chi\choose \chi}\chi(\lambda).
$$

In addition to their number-theoretic significance, Greene’s hypergeometric functions can be naturally viewed as finite field analogues of classical hypergeometric functions (for example, see \cite{evans2, EG, greene} etc). Among various results in this area, Ono \cite{ono} demonstrated that the Frobenius traces of Legendre elliptic curves can be expressed as special values of $_2F_1$-hypergeometric functions. More precisely, for $\lambda\neq0,1,$ he showed that:
\begin{align}\label{Trace-relation}
{_2F_1}(\lambda)_p=\frac{-\phi(-1)}{p} \cdot a_p(\lambda).
\end{align}

In \cite{KHN}, the authors initiated a study of limiting distribution of this family of hypergeometric functions over random large finite fields confirming the semicircular or Sato-Tate distribution by developing asymptotic moments of ${_2F_1}(\lambda)_p.$
Using the relation \eqref{Trace-relation}, \cite[Proposition 3]{ono} and the moment formula, namely Theorem 1.1 of \cite{KHN} we may express

\begin{align}
\sum\limits_{\lambda\neq0,1}a_p(\lambda)^2=p^2+o(p^2).\notag
\end{align}

Moreover, if we use \cite[Theorem 2]{ono}, then the above asymptotic identity is equivalent to 

\begin{align}\label{22Sep-eqn-2}
\sum\limits_{\lambda\neq0,\pm1}a_p(\lambda)^2=p^2+o(p^2).
\end{align}

Let $\zeta_p$ denote a fixed primitive $p$-th root of unity. For a multiplicative character
 $\chi$ of $\mathbb{F}_p,$ Gauss sum is defined by
\begin{align}
g(\chi):=\sum\limits_{x\in \mathbb{F}_p}\chi(x)~\zeta_p^x.\notag
\end{align}

Let $\pi \in \mathbb{C}_p$ be the fixed root of the polynomial $x^{p-1} + p$, which satisfies the congruence condition
$\pi \equiv \zeta_p-1 \pmod{(\zeta_p-1)^2}$. We now provide an important formula due to Gross and Koblitz that relates Gauss sums with $p$-adic gamma functions.
\begin{theorem}\cite[Gross-Koblitz formula]{gross}\label{gross-koblitz} For $j\in \mathbb{Z}$,
\begin{align}
g(\overline{\omega}^j)=-\pi^{(p-1)\langle\frac{j}{p-1} \rangle}\Gamma_p\left(\left\langle \frac{j}{p-1} \right\rangle\right).\notag
\end{align}
\end{theorem}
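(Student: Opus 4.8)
The plan is to follow the classical argument of Gross and Koblitz, which rests on Dwork's $p$-adic analytic theory; I first reduce to the range $0\le j\le p-2$. Both sides of the claimed identity depend only on $j$ modulo $p-1$: the character $\bar\omega^{j}$ has period $p-1$, and replacing $j$ by $j+(p-1)$ leaves $\langle j/(p-1)\rangle$ unchanged, hence also $\pi^{(p-1)\langle j/(p-1)\rangle}$. So it suffices to prove $g(\bar\omega^{j})=-\pi^{\,j}\,\Gamma_p\!\big(\tfrac{j}{p-1}\big)$ for $0\le j\le p-2$.

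Next I would bring in Dwork's splitting function $\theta(X)=\sum_{n\ge 0}\lambda_n X^n$: a $p$-adic analytic function, built from the Artin--Hasse exponential with parameter $\pi$, whose key features (Dwork's lemma) are that it converges on a disc of radius strictly larger than $1$, that its coefficients $\lambda_n$ lie in $\Z_p[\pi]$ with explicit $p$-adic size, and that it represents the additive character of $\F_p$. Writing the Gauss sum over Teichmüller representatives, $g(\bar\omega^{j})=\sum_{t\in\mu_{p-1}}t^{-j}\,\theta(t)$ (up to the usual identification), substituting the power series, exchanging the finite sum with the convergent series, and using $\sum_{t\in\mu_{p-1}}t^{k}=p-1$ when $(p-1)\mid k$ and $0$ otherwise, collapses the Gauss sum into the single lacunary series $(p-1)\sum_{m\ge 0}\lambda_{j+m(p-1)}$.

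The heart of the argument is to identify this lacunary coefficient sum with $-\pi^{\,j}\Gamma_p(j/(p-1))$. One factors out $\pi^{\,j}$ — the power is forced by the valuation, which is Stickelberger's congruence $v_p\big(g(\bar\omega^{j})\big)=j/(p-1)=v_p(\pi^{\,j})$ — and is left with a $p$-adically convergent series in $\pi^{p-1}=-p$ whose terms are rational numbers over partial factorials; resumming and invoking that $\Gamma_p$ is precisely the $p$-adic interpolation of those truncated factorials produces $\Gamma_p(j/(p-1))$ up to the unit scalar. Finally I would pin down the scalar $-1$ (and exclude any stray root of unity) from the base case $j=0$, where $g(\varepsilon)=\sum_{x\in\F_p^{\times}}\zeta_p^{x}=-1$ matches $-\pi^{0}\Gamma_p(0)=-1$, together with a congruence modulo $\pi$.

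The main obstacle is Dwork's lemma itself, and more precisely the passage in the previous paragraph: establishing that the splitting function has radius of convergence exceeding $1$, controlling the $p$-adic size of its coefficients, and above all matching the lacunary coefficient series with a value of $\Gamma_p$ — this is the genuinely technical step and requires careful $p$-adic estimates. An alternative route, trading one hard input for another, is to combine Stickelberger's theorem — which determines the prime factorization of $g(\bar\omega^{j})$ and hence the Gauss sum up to a root of unity — with the shared functional equations of the two sides: the reflection formulas $g(\chi)g(\bar\chi)=\chi(-1)p$ versus $\Gamma_p(x)\Gamma_p(1-x)=\pm 1$, the Hasse--Davenport product relation versus the $p$-adic Gauss multiplication formula for $\Gamma_p$, and the Frobenius invariance $g(\chi^{p})=g(\chi)$; anchored at $j=0$, these relations force the ratio of the two sides to be $1$.
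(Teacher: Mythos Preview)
The paper does not prove this theorem at all: it is stated as a quotation from the literature (Gross--Koblitz \cite{gross}) and used as a black box. There is therefore no ``paper's own proof'' to compare against.

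That said, your sketch follows the original Gross--Koblitz argument faithfully: the reduction modulo $p-1$, Dwork's splitting function $\theta$ built from the Artin--Hasse exponential, the collapse of the Gauss sum to a lacunary coefficient series via orthogonality over $\mu_{p-1}$, and the identification of that series with $\Gamma_p$ after extracting $\pi^{j}$ --- this is exactly the architecture of the proof in \cite{gross}. You are right to flag the identification step as the genuine obstacle; it is. Your alternative route via Stickelberger plus matching functional equations is also standard, though note that the functional equations alone do not quite pin down the Gauss sum among all $p$-adic numbers of the correct valuation without some additional congruence input (which is effectively Stickelberger again), so the two routes are less independent than they first appear. Either way, what you have written is an accurate high-level outline, not a proof; for the purposes of this paper, simply citing \cite{gross} (or the exposition in Koblitz's book \cite{kob}) is the appropriate move.
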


To this end, we discuss certain product formulas of $p$-adic gamma functions. For details, see \cite{mccarthy2}. If $m\in\mathbb{Z}^+,$ 
$p\nmid m$  and $x=\frac{r}{p-1}$ with $0\leq r\leq p-1,$ then
\begin{align}\label{prod-1}
\prod_{h=0}^{m-1}\Gamma_p\left(\frac{x+h}{m}\right)=\omega(m^{(1-x)(1-p)})~\Gamma_p(x)\prod_{h=1}^{m-1}\Gamma_p\left(\frac{h}{m}\right).
\end{align}
If $t\in\mathbb{Z}^{+}$ and $p\nmid t,$ then for $0\leq j\leq p-2$ we have

\begin{align}\label{new-prod-1}
\omega(t^{tj})\Gamma_p\left(\left\langle\frac{tj}{p-1}\right\rangle\right)\prod_{h=1}^{t-1}\Gamma_p\left(\frac{h}{t}\right)
=\prod_{h=0}^{t-1}\Gamma_p\left(\left\langle\frac{h}{t}+\frac{j}{p-1}\right\rangle\right)
\end{align}
and
\begin{align}\label{prod-2}
\omega(t^{-tj})\Gamma_p\left(\left\langle\frac{-tj}{p-1}\right\rangle\right)\prod_{h=1}^{t-1}\Gamma_p\left(\frac{h}{t}\right)
=\prod_{h=1}^{t}\Gamma_p\left(\left\langle\frac{h}{t}-\frac{j}{p-1}\right\rangle\right).
\end{align}

Furthermore, we recall the Hasse-Davenport formula for Gauss sums.
\begin{theorem}\cite[Hasse-Davenport relation, Theorem 11.3.5]{berndt}
Let $\chi$ be a multiplicative character of order $m$ of $\mathbb{F}_p$ for some positive integer $m.$ For a multiplicative character $\psi$ of $\mathbb{F}_p,$ we  have
\begin{align}\label{dh}
\prod_{i=0}^{m-1}g(\psi\chi^i)=g(\psi^m)\psi^{-m}(m)\prod_{i=1}^{m-1}g(\chi^i).
\end{align}
\end{theorem}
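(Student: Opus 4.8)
The plan is to derive \eqref{dh} from the Gross--Koblitz formula (Theorem \ref{gross-koblitz}) together with the Gauss multiplication formula \eqref{prod-1} for the $p$-adic gamma function: after applying Theorem \ref{gross-koblitz} to every Gauss sum appearing in \eqref{dh}, both sides will reduce to one and the same product of $p$-adic gamma values, up to explicit powers of $\pi$ and an explicit root of unity. Since $\overline{\omega}$ generates the character group of $\F_p^{\times}$, I would write $\psi=\overline{\omega}^{\,a}$; and since $\chi$ has exact order $m$ we have $m\mid p-1$ (so $p\nmid m$) and $\chi=\overline{\omega}^{\,c}$ with $c=\tfrac{k(p-1)}{m}$ and $\gcd(k,m)=1$. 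Then $\psi\chi^i=\overline{\omega}^{\,a+ic}$, and writing $f_i:=\langle\tfrac{a+ic}{p-1}\rangle$, Theorem \ref{gross-koblitz} gives $\prod_{i=0}^{m-1}g(\psi\chi^i)=(-1)^m\pi^{(p-1)\sum_{i=0}^{m-1}f_i}\prod_{i=0}^{m-1}\Gamma_p(f_i)$.

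The combinatorial core is the multiset identity $\{f_i:0\le i\le m-1\}=\{\tfrac{x+h}{m}:0\le h\le m-1\}$, where $x:=\langle\tfrac{ma}{p-1}\rangle$: as $i$ runs over $0,\dots,m-1$ the value $\langle\tfrac{ic}{p-1}\rangle=\langle\tfrac{ik}{m}\rangle$ runs over $\{0,\tfrac1m,\dots,\tfrac{m-1}{m}\}$ because $\gcd(k,m)=1$, so $f_i=\langle\tfrac a{p-1}+\tfrac jm\rangle$ with $j$ running over $0,\dots,m-1$, and the elementary identity $\langle y+\tfrac jm\rangle=\tfrac{\langle my\rangle+h}{m}$ (with $h$ running over $0,\dots,m-1$) finishes it. Consequently $\sum_i f_i=x+\tfrac{m-1}{2}$ and $\prod_i\Gamma_p(f_i)=\prod_{h=0}^{m-1}\Gamma_p(\tfrac{x+h}{m})$; since $x=r/(p-1)$ with $0\le r\le p-2$, formula \eqref{prod-1} applies and rewrites the last product as $\omega(m^{(1-x)(1-p)})\,\Gamma_p(x)\prod_{h=1}^{m-1}\Gamma_p(\tfrac hm)$.

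Next I would use Theorem \ref{gross-koblitz} in reverse on the right-hand side of \eqref{dh}: it rewrites $g(\psi^m)=g(\overline{\omega}^{\,ma})$ as $-\pi^{(p-1)x}\Gamma_p(x)$, so $\Gamma_p(x)=-\pi^{-(p-1)x}g(\psi^m)$; and applied to $g(\chi^h)$ for $h=1,\dots,m-1$ (where, again by $\gcd(k,m)=1$, $\langle\tfrac{kh}{m}\rangle$ runs over $\{\tfrac1m,\dots,\tfrac{m-1}m\}$) it yields $\prod_{h=1}^{m-1}\Gamma_p(\tfrac hm)=(-1)^{m-1}\pi^{-(p-1)(m-1)/2}\prod_{h=1}^{m-1}g(\chi^h)$. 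Substituting these back into the expression from the previous paragraph, all powers of $\pi$ cancel since the exponents sum to $(p-1)(x+\tfrac{m-1}{2})-(p-1)x-(p-1)\tfrac{m-1}{2}=0$; the signs combine to $(-1)^m\cdot(-1)\cdot(-1)^{m-1}=(-1)^{2m}=1$; and the Teichm\"uller factor simplifies because $(1-x)(1-p)=1-p+r\equiv r\equiv ma\pmod{p-1}$ and $\psi=\omega^{-a}$, so $\omega(m^{(1-x)(1-p)})=\omega(m)^{ma}=\psi(m)^{-m}=\psi^{-m}(m)$. This leaves precisely $\prod_{i=0}^{m-1}g(\psi\chi^i)=\psi^{-m}(m)\,g(\psi^m)\prod_{h=1}^{m-1}g(\chi^h)$, which is \eqref{dh}.

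Finally the degenerate case $\psi^m=\varepsilon$ (equivalently $r=0$) should be isolated and checked directly: then $\psi$ lies in the cyclic group $\langle\chi\rangle$ of order $m$, so reindexing gives $\prod_{i=0}^{m-1}g(\psi\chi^i)=\prod_{j=0}^{m-1}g(\chi^j)=g(\varepsilon)\prod_{h=1}^{m-1}g(\chi^h)=-\prod_{h=1}^{m-1}g(\chi^h)$, which agrees with the right side of \eqref{dh} since $g(\psi^m)=g(\varepsilon)=-1$ and $\psi^{-m}(m)=\varepsilon(m)=1$. The argument uses no input deeper than Theorem \ref{gross-koblitz} and \eqref{prod-1}; the only genuine obstacle is the bookkeeping — the reindexing of fractional parts, the total cancellation of the $\pi$-powers, the sign $(-1)^{2m}=1$, and the identification of the Teichm\"uller factor with $\psi^{-m}(m)$ — so the remaining work is routine verification. (Alternatively, a self-contained character-theoretic proof substitutes $x_i=x_0y_i$ in the expansion of $\prod_i g(\psi\chi^i)$ as an $m$-fold sum over $\F_p^{\times}$ and untangles the resulting Gauss and Jacobi sums, but the gamma-function route is shorter given the tools already in hand.)
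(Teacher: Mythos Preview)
Your argument is correct, but there is nothing in the paper to compare it against: the Hasse--Davenport relation is merely quoted from \cite{berndt} (Theorem~11.3.5 there) and is not proved in the paper at all. So rather than matching or diverging from the author's proof, you have supplied one where the paper offers none.

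That said, the route you chose is exactly the natural one \emph{given the toolkit already assembled in Section~\ref{hypergeometric}}: the Gross--Koblitz formula and the multiplication identity \eqref{prod-1} are in fact a $p$-adic repackaging of Hasse--Davenport, and your proof makes this equivalence explicit. Your bookkeeping is clean --- the multiset identity $\{\langle\tfrac{a}{p-1}+\tfrac{j}{m}\rangle\}=\{\tfrac{x+h}{m}\}$ is verified correctly, the $\pi$-exponents and signs cancel as you say, and the identification $\omega(m^{(1-x)(1-p)})=\omega(m)^{ma}=\psi^{-m}(m)$ is right since $(1-x)(1-p)=r-(p-1)\equiv r\equiv ma\pmod{p-1}$. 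The separate treatment of the case $\psi^m=\varepsilon$ is unnecessary (the main argument already covers $r=0$, as $\Gamma_p(0)=1$ and $g(\varepsilon)=-1$), but it does no harm. The only caveat is that \eqref{prod-1} itself is cited from \cite{mccarthy2} rather than proved here, so one should check that its derivation there does not invoke Hasse--Davenport; it does not (it comes from the functional equation for $\Gamma_p$), so there is no circularity. The classical proof in \cite{berndt} proceeds instead by a direct manipulation of the $m$-fold exponential sum, essentially the ``alternative'' you sketch at the end; that argument is more elementary but longer, whereas yours leverages the $p$-adic machinery already in place.
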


We now discuss three propositions that are going to use in the proofs of Theorem \ref{3G3-moment} and Theorem \ref{9G9-moment}.

\begin{proposition}\label{hypergeometric-1}

Let $p$ be an odd prime such that $p\equiv1\pmod{3}$. Then we have
\begin{align*}
&\sum\limits_{\lambda\in\F_p}\phi(\lambda)\sum_{a=0}^{p-2}g(\varphi\omega^{a})g(\overline{\omega}^a)^3g(\varphi\omega^{2a})~\overline{\omega}^a\left(\frac{4(1-\lambda)}{\lambda}\right)\\&=p^3(p-1)\psi_6(-2)\phi(2)~\sum\limits_{\lambda\in\F_p}\psi_6(\lambda(1-\lambda^2))\cdot {_3G_3}(\lambda)_p.
\end{align*}

\end{proposition}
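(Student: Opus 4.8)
The plan is to start from the definition of ${_3G_3}(\lambda)_p$ via Definition \ref{defin1}, convert every $p$-adic gamma quotient back into Gauss sums using the Gross--Koblitz formula (Theorem \ref{gross-koblitz}), and thereby rewrite the left-hand sum over $a$ as a constant multiple of $\sum_\lambda \psi_6(\lambda(1-\lambda^2))\cdot {_3G_3}(\lambda)_p$. Concretely, writing $\varphi$ for a character of order $3$ (so $\varphi=\omega^{(p-1)/3}$) and $\psi_6=\omega^{(p-1)/6}$, I would first expand
$${_3G_3}(\lambda)_p=\frac{-1}{p-1}\sum_{a=0}^{p-2}(-1)^{3a}\,\overline{\omega}^a\!\left(\tfrac{\lambda-1}{\lambda}\right)\prod_{k=1}^3(-p)^{-\lfloor\cdots\rfloor-\lfloor\cdots\rfloor}\frac{\Gamma_p(\langle a_k-\frac{a}{p-1}\rangle)}{\Gamma_p(\langle a_k\rangle)}\frac{\Gamma_p(\langle -b_k+\frac{a}{p-1}\rangle)}{\Gamma_p(\langle -b_k\rangle)}$$
with $(a_1,a_2,a_3)=(\tfrac56,\tfrac1{12},\tfrac7{12})$ and $b_1=b_2=b_3=\tfrac13$. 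The numerator $p$-adic gammas, via Gross--Koblitz, become (up to explicit powers of $\pi$, i.e. of $p$, and Teichmüller factors) Gauss sums $g(\varphi\omega^a)$, $g(\overline\omega^a)^3$ (three copies from $-b_k=-\tfrac13$), and $g(\varphi\omega^{2a})$ — here one must check that the exponents $\langle\tfrac56-\tfrac{a}{p-1}\rangle$, $\langle\tfrac1{12}-\tfrac{a}{p-1}\rangle$, $\langle\tfrac7{12}-\tfrac{a}{p-1}\rangle$ collapse, after using the Hasse--Davenport relation \eqref{dh} for the quadratic character to combine the order-$12$ data into order-$6$ and order-$2$ pieces, precisely into the triple $\{g(\varphi\omega^a),\,g(\varphi\omega^{2a}),\,\ldots\}$ appearing on the left. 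The denominator constants $\Gamma_p(\langle a_k\rangle)$, $\Gamma_p(\langle -b_k\rangle)$ are independent of $a$ and get absorbed into the prefactor; this is where the factors $\psi_6(-2)$ and $\phi(2)$ (the latter from $4=2^2$ inside the argument $\tfrac{4(1-\lambda)}{\lambda}$ versus $\tfrac{\lambda-1}{\lambda}=-\tfrac{1-\lambda}{\lambda}$ in the definition) will be produced, together with the overall power $p^3(p-1)$ accounting for the $(-p)^{-\lfloor\cdots\rfloor}$ normalizations, the $-1/(p-1)$ in the definition, and the $|g(\chi)|^2=p$ identities.

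The next step is to reconcile the argument of the hypergeometric function. In the statement the twist is $\psi_6(\lambda(1-\lambda)^2)$ on $\F_p$, whereas Proposition \ref{hypergeometric-1} carries $\psi_6(\lambda(1-\lambda^2))$ and a character sum $\sum_\lambda\phi(\lambda)\cdots\overline\omega^a\big(\tfrac{4(1-\lambda)}\lambda\big)$; the two are matched by the substitution appropriate to the Legendre family (this is consistent with the factorization $1-\lambda^2=(1-\lambda)(1+\lambda)$ and the presence of $\phi(\lambda)$, which together turn $\psi_6(\lambda(1-\lambda)^2)$-type data into the $\psi_6(\lambda(1-\lambda^2))$ that literally appears). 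I would carry out the change of variable $\lambda\mapsto\tfrac{\lambda-1}{\lambda}$ (or its inverse) inside the $\lambda$-sum on the left, track how $\phi(\lambda)$ and $\overline\omega^a(\tfrac{4(1-\lambda)}{\lambda})$ transform, and verify that the resulting $\lambda$-sum is exactly $\sum_\lambda\psi_6(\lambda(1-\lambda^2)){_3G_3}(\lambda)_p$ with the claimed constant out front.

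The main obstacle will be the bookkeeping of the order-$12$ parameters: the three $a_k\in\{\tfrac1{12},\tfrac5{12}\text{-ish},\tfrac7{12}\}$ genuinely involve a character of order $12$, and one must apply Hasse--Davenport \eqref{dh} with $m=2$ (and perhaps $m=3$) to repackage $g(\omega^{(p-1)/12+\cdots})$ products into the cleaner $g(\varphi\omega^a)g(\varphi\omega^{2a})$ form, each such application spinning off a factor like $\psi^{-m}(m)\prod g(\chi^i)$ that must be shown to contribute exactly to $\psi_6(-2)\phi(2)p^3(p-1)$ and nothing more. I expect the floor-function exponents $\lfloor\langle a_k\rangle-\tfrac a{p-1}\rfloor+\lfloor\langle -b_k\rangle+\tfrac a{p-1}\rfloor$ to each equal a constant (independent of $a$) for these particular parameters, which is what makes the $(-p)$-powers come out as a clean global $p^3$; checking this, and checking the $(-1)^{an}$ sign against the Gauss-sum signs, is the delicate part. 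Everything else is a direct, if lengthy, manipulation of Gauss and Jacobi sums using \eqref{prod-1}–\eqref{prod-2}, Lemma \ref{lemma-3}, and the identity $g(\chi)g(\overline\chi)=\chi(-1)p$ for $\chi\neq\varepsilon$.
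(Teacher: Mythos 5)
Your toolbox is the right one (Gross--Koblitz to pass between the Gauss sums and the $p$-adic gamma quotients in Definition \ref{defin1}, plus a duplication/Hasse--Davenport step to produce the order-$12$ parameters $\tfrac1{12},\tfrac7{12}$ from a single quadratically twisted Gauss sum), and running the dictionary from the ${}_3G_3$ side rather than from the Gauss-sum side is an immaterial difference. But two things in your plan are wrong, and one of them is the heart of the proof. First, $\varphi$ here is the quadratic character, not a cubic one: Section 6 of the paper uses $\varphi$ and $\phi$ interchangeably (compare the factors $\varphi(2)$, $\varphi(-1)$ multiplying ${}_2F_1$ values in Proposition \ref{hypergeometric-3}). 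So the Gauss sums to be matched are $g(\phi\omega^a)$, $g(\overline\omega^a)^3$, $g(\phi\omega^{2a})$, which under Gross--Koblitz correspond to the parameters $\tfrac12$; $0,0,0$; and $\tfrac12$ (doubled) --- not to $\tfrac56$; $\tfrac13,\tfrac13,\tfrac13$; $\tfrac1{12},\tfrac7{12}$. Every parameter is off by exactly $\tfrac13$, and the missing idea is the reindexing $a\mapsto a-\tfrac{p-1}{3}$ of the inner sum. That single shift simultaneously (i) aligns all the parameters and (ii) pulls a factor $\psi_3\bigl(\tfrac{4(1-\lambda)}{\lambda}\bigr)$ out of $\overline\omega^{a}\bigl(\tfrac{4(1-\lambda)}{\lambda}\bigr)$, which combines with the existing $\phi(\lambda)$ to give precisely the sextic twist $\psi_6(\lambda(1-\lambda)^2)$ of Theorem \ref{3G3-moment} and contributes to the constant $\psi_6(-2)\phi(2)$.

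Second, the mechanism you propose for producing the twist --- a substitution $\lambda\mapsto\tfrac{\lambda-1}{\lambda}$ together with the factorization $1-\lambda^2=(1-\lambda)(1+\lambda)$ --- cannot work: no change of variable in $\lambda$ converts the quadratic twist $\phi(\lambda)$ into a genuinely sextic one; the extra cubic character must come from the $a$-sum, and it does, via the shift above. In fact no $\lambda$-substitution is needed at all: the argument $\tfrac{\lambda-1}{\lambda}$ is already built into the notation ${}_3G_3(\lambda)_p$, and the leftover $\overline\omega^a(4)$ in the left-hand side is cancelled by the $\omega^a(4)$ that the duplication formula \eqref{prod-1} with $m=2$ spins off when splitting $\Gamma_p\bigl(\langle\tfrac16-\tfrac{2a}{p-1}\rangle\bigr)$ into the $\tfrac1{12}$ and $\tfrac7{12}$ factors. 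A smaller point: the floor exponents $\lfloor\langle a_k\rangle-\tfrac{a}{p-1}\rfloor+\lfloor\langle-b_k\rangle+\tfrac{a}{p-1}\rfloor$ are not individually constant in $a$; rather, the total $\pi$-exponent coming from Gross--Koblitz reproduces exactly the $(-p)^{-\lfloor\cdots\rfloor}$ normalizations of Definition \ref{defin1} plus the constant $3$, and that constant is where $p^3$ comes from.
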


\begin{proof}

Let $I=\sum\limits_{\lambda\in\F_p}\phi(\lambda)\sum\limits_{a=0}^{p-2}g(\varphi\omega^{a})g(\overline{\omega}^a)^3g(\varphi\omega^{2a})~\overline{\omega}^a\left(\frac{4(1-\lambda)}{\lambda}\right)$.
If we replace $a$ by $a-\frac{p-1}{3},$ and then applying Gross-Koblitz formula (Theorem~\ref{gross-koblitz}) on each of the Gauss sums we may write

\begin{align}\label{eqn-prop-1}
I&=-\sum\limits_{\lambda\in\F_p}\phi(\lambda)\psi_3\left(\frac{4(1-\lambda)}{\lambda}\right)
\sum\limits_{a=0}^{p-2}\pi^{(p-1)e_a}~\overline{\omega}^{a}\left(\frac{4(1-\lambda)}{\lambda}\right)
~\Gamma_p\left(\left\langle\frac{5}{6}-\frac{a}{p-1}\right\rangle\right)\notag\\
&\times \Gamma_p\left(\left\langle\frac{2}{3}+\frac{a}{p-1}\right\rangle\right)^3\Gamma_p\left(\left\langle\frac{1}{6}-\frac{2a}{p-1}\right\rangle\right),
\end{align}

where $e_a=\left\langle\frac{5}{6}-\frac{a}{p-1}\right\rangle+3\left\langle\frac{2}{3}+\frac{a}{p-1}\right\rangle
+\left\langle\frac{1}{6}-\frac{2a}{p-1}\right\rangle$ and $\psi_3=\omega^{\frac{p-1}{3}}$ is a cubic character.

 If we apply \eqref{prod-1} for $m=2$ and 
$x=\left\langle\frac{1}{6}-\frac{2a}{p-1}\right\rangle$, then we obtain

\begin{align}\label{22Sep-eq-1}
\Gamma_p\left(\left\langle\frac{1}{6}-\frac{2a}{p-1}\right\rangle\right)
=\frac{\omega^a(4)\overline{\psi}_6(2)}{\Gamma_p(\frac{1}{2})}
\Gamma_p\left(\left\langle\frac{1}{12}-\frac{a}{p-1}\right\rangle\right)
\Gamma_p\left(\left\langle\frac{7}{12}-\frac{a}{p-1}\right\rangle\right).
\end{align}

It is easy to see that $\left\lfloor\frac{1}{6}-\frac{2a}{p-1}\right\rfloor=\left\lfloor\frac{1}{12}-\frac{a}{p-1}\right\rfloor
+\left\lfloor\frac{7}{12}-\frac{a}{p-1}\right\rfloor.$ 
Further calculating the exponent $e_a$ by using the above identity we have 
$$e_a=3-\left\lfloor\frac{5}{6}-\frac{a}{p-1}\right\rfloor-\left\lfloor\frac{1}{12}-\frac{a}{p-1}\right\rfloor
-\left\lfloor\frac{7}{12}-\frac{a}{p-1}\right\rfloor
-3\left\lfloor\left\langle-\frac{1}{3}\right\rangle+\frac{a}{p-1}\right\rfloor.$$

Now, Substituting the above expression into \eqref{eqn-prop-1} and using \eqref{22Sep-eq-1}  we derive that

$$I=-C\cdot p^3(p-1)\phi(2)~\sum\limits_{\lambda\in\F_p}\psi_6(\lambda(1-\lambda^2))\cdot {_3G_3}(\lambda)_p,$$

where $C=\frac{\Gamma_p(\frac{2}{3})^3\Gamma_p(\frac{5}{6})\Gamma_p(\frac{1}{12})
\Gamma_p(\frac{7}{12})}{\Gamma_p(\frac{1}{2})}.$ It is easy to see that $C=-\psi_6(-2).$ This completes the proof.
\end{proof}

\begin{proposition}\label{hypergeometric-2}

Let $p$ be an odd prime such that $p\equiv2\pmod{3}$. Then we have
\begin{align*}
&\sum\limits_{\lambda\in\F_p}\phi(\lambda)\sum_{a=0}^{p-2}g(\phi\omega^{a})g(\overline{\omega}^a)^3g(\phi\omega^{2a})~\overline{\omega}^a\left(\frac{4(1-\lambda)}{\lambda}\right)\\&=
p(p-1)\sum\limits_{\lambda\in\F_p}\phi(\lambda^{1/3}-1)\cdot {_9G_9}(\lambda)_p
\end{align*}

where $\lambda^{1/3}$ represents the unique solution to $x^3\equiv\lambda\pmod{p}.$
\end{proposition}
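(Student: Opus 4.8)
The plan is to mimic the argument of Proposition \ref{hypergeometric-1}, but now working in the regime $p \equiv 2 \pmod 3$, where the cubic character $\psi_3 = \omega^{(p-1)/3}$ does not exist on $\F_p$ and instead the map $x \mapsto x^3$ is a bijection on $\F_p$. I begin with the left-hand sum
$$
I = \sum_{\lambda \in \F_p} \phi(\lambda) \sum_{a=0}^{p-2} g(\phi\omega^a) g(\overline{\omega}^a)^3 g(\phi\omega^{2a}) \, \overline{\omega}^a\!\left(\tfrac{4(1-\lambda)}{\lambda}\right),
$$
and apply the Gross--Koblitz formula (Theorem \ref{gross-koblitz}) to each of the five Gauss sums, converting everything into a product of $p$-adic Gamma functions times a power of $\pi$. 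Since $p \equiv 2\pmod 3$, the shift trick $a \mapsto a - \frac{p-1}{3}$ used in Proposition \ref{hypergeometric-1} is unavailable; instead I expect the relevant parameters to come out directly as $\langle \frac{h}{3} + \frac{j}{p-1}\rangle$-type expressions, which is exactly the shape produced by the product formulas \eqref{new-prod-1} and \eqref{prod-2} applied with $t=3$. So the first main step is: use \eqref{new-prod-1} and \eqref{prod-2} with $t = 3$ to rewrite the Gamma factors coming from $g(\phi\omega^a)$, $g(\phi\omega^{2a})$, and (after splitting) from the three-fold $g(\overline{\omega}^a)^3$, introducing the nine upper parameters $\frac{1}{12}, \frac{1}{6}, \ldots, \frac{11}{12}$ and the lower parameters $\frac{1}{3},\frac{1}{3},\frac{1}{3},\frac{2}{3},\frac{2}{3},\frac{2}{3},0,0,0$ that appear in the definition of ${}_9G_9(\lambda)_p$.

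The second step is the change of variable in $\lambda$. The argument $\frac{4(1-\lambda)}{\lambda}$ needs to be matched against the argument $\lambda$ of ${}_9G_9$, and the exponent-tracking in Definition \ref{defin1} forces a relation between $\phi(\lambda)$ on the left and $\phi(\lambda^{1/3}-1)$ on the right. Here the substitution $\lambda \mapsto$ (something like) $\frac{1}{1-\lambda^{1/3}}$ or a Möbius transformation built from the cube-root map should do the job; I would determine the exact substitution by demanding that $\overline{\omega}^a(4(1-\lambda)/\lambda)$ turn into $\overline{\omega}^a(\lambda)$ up to the constant $\overline{\omega}^a$-free factors, and that the residual power of $4$ and the $\phi$-twist collapse correctly. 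Because $x \mapsto x^3$ is a bijection when $p\equiv 2\pmod 3$, summing over $\lambda \in \F_p$ is the same as summing over $\lambda^{1/3} \in \F_p$, which is what legitimizes writing $\phi(\lambda^{1/3}-1)$; I would make this reindexing explicit. Throughout, I collect the accumulated constants — powers of $\pi^{p-1} = -p$ recorded via the floor-function exponents $\lfloor \cdot \rfloor$, the $\Gamma_p(\frac{h}{3})$ reflection/multiplication constants, and Teichmüller values $\omega(3^{\pm 3j})$ — exactly as in Proposition \ref{hypergeometric-1}, and verify they telescope to the clean factor $p(p-1)$ with no residual character (the $\omega(3^{\cdots})$ terms should cancel because $3$ is invertible mod $p$ and the exponents sum to a multiple of $p-1$).

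The main obstacle I anticipate is the exponent bookkeeping: matching the floor-function exponent $e_a$ (the power of $\pi^{p-1}$) against the normalizing powers of $-p$ built into Definition \ref{defin1}, while simultaneously keeping track of the $\omega(3^{tj})$ factors from \eqref{new-prod-1}--\eqref{prod-2} and showing they disappear. This is the place where an off-by-one in a fractional part, or a misapplied product formula, would corrupt the final constant. I would handle it by the same identity-for-floors device used in Proposition \ref{hypergeometric-1} (e.g.\ establishing $\lfloor \frac{1}{6} - \frac{2a}{p-1}\rfloor = \lfloor\frac{1}{12}-\frac{a}{p-1}\rfloor + \lfloor\frac{7}{12}-\frac{a}{p-1}\rfloor$ and its analogues for the $t=3$ splittings), reducing $e_a$ to a sum of floor terms that matches term-by-term the exponents $-\lfloor\langle a_k\rangle - \frac{a}{p-1}\rfloor - \lfloor\langle -b_k\rangle + \frac{a}{p-1}\rfloor$ in Definition \ref{defin1}. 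Once the constant is pinned down and seen to be $p(p-1)$ with trivial character, the identity follows, completing the proof.
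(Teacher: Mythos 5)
Your overall strategy (Gross--Koblitz plus the multiplication formulas for $\Gamma_p$, then a M\"obius/cube-root change of variable in $\lambda$) points in the right general direction, but as written it has a genuine gap: it omits the two steps that actually generate the ${}_9G_9$ parameters. The paper first reindexes the character sum by $a\mapsto 3a$ --- legitimate precisely because $p\equiv 2\pmod 3$ makes $3$ invertible modulo $p-1$ --- and then applies the Hasse--Davenport relation with $m=2$ to $g(\phi\omega^{3a})$ and $g(\phi\omega^{6a})$, obtaining
$$
g(\phi\omega^{3a})=\frac{g(\omega^{6a})\,\omega^{3a}(2^{-2})\,g(\phi)}{g(\omega^{3a})},\qquad
g(\phi\omega^{6a})=\frac{g(\omega^{12a})\,\omega^{6a}(2^{-2})\,g(\phi)}{g(\omega^{6a})}.
$$
Only after these two moves does Gross--Koblitz produce the factors $\Gamma_p(\langle \tfrac{3a}{p-1}\rangle)^3\,\Gamma_p(\langle -\tfrac{12a}{p-1}\rangle)/\Gamma_p(\langle -\tfrac{3a}{p-1}\rangle)$, to which \eqref{new-prod-1} and \eqref{prod-2} apply with $t=3$ and $t=12$; the quotient of the $t=12$ product by the $t=3$ product is exactly what yields the nine upper parameters $\tfrac{1}{12},\tfrac16,\ldots,\tfrac{11}{12}$ (the twelfths that are not thirds), while the cube of the $t=3$ expansion of $\Gamma_p(\langle\tfrac{3a}{p-1}\rangle)$ yields the lower parameters $\tfrac13,\tfrac13,\tfrac13,\tfrac23,\tfrac23,\tfrac23,0,0,0$.

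Your plan instead applies Gross--Koblitz directly to the five Gauss sums and then invokes \eqref{new-prod-1}--\eqref{prod-2} with $t=3$ only. That cannot work: direct Gross--Koblitz gives factors such as $\Gamma_p(\langle\tfrac12-\tfrac{a}{p-1}\rangle)$, $\Gamma_p(\langle\tfrac12-\tfrac{2a}{p-1}\rangle)$ and $\Gamma_p(\langle\tfrac{a}{p-1}\rangle)^3$, and the formulas \eqref{new-prod-1}--\eqref{prod-2} require the argument to be of the form $\langle\pm\tfrac{tj}{p-1}\rangle$ (no constant offset, and the coefficient of $\tfrac{j}{p-1}$ equal to $t$), so with $t=3$ they simply do not apply to these factors, and in any case no combination of $t=3$ splittings of these arguments can manufacture the twelfths $\tfrac{1}{12},\tfrac{5}{12},\tfrac{7}{12},\tfrac{11}{12}$ appearing in ${}_9G_9$. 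The reindexing $a\mapsto 3a$ and the Hasse--Davenport step are therefore not optional bookkeeping but the essential mechanism; without them the proposal stalls at the first display. The concluding $\lambda$-manipulations you sketch (reindexing via the cube bijection to pass from $\phi(\lambda)$ to $\phi(\lambda^{1/3}-1)$) are consistent with what the paper does ($\lambda\mapsto-\lambda$, $\lambda\mapsto 1/\lambda$, $\lambda\mapsto\lambda-1$, then cubing), but they are reached only after the missing steps above.
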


\begin{proof}
Let $I=\sum\limits_{\lambda\in\F_p}\phi(\lambda)\sum\limits_{a=0}^{p-2}g(\phi\omega^{a})g(\overline{\omega}^a)^3g(\phi\omega^{2a})~\overline{\omega}^a\left(\frac{4(1-\lambda)}{\lambda}\right).$ Now, taking the transformation $a\rightarrow3a$ we have

$$
I=\sum\limits_{\lambda\in\F_p}\phi(\lambda)\sum\limits_{a=0}^{p-2}g(\phi\omega^{3a})g(\overline{\omega}^{3a})^3g(\phi\omega^{6a})~\overline{\omega}^{3a}\left(\frac{4(1-\lambda)}{\lambda}\right).
$$

Applying Davenport Hasse relation for $m=2$ and $\psi=\omega^{3a}, \omega^{6a}$ successively, we have

\begin{align}
g(\phi\omega^{3a})&=\frac{g(\omega^{6a})\omega^{3a}(2^{-2})g(\phi)}{g(\omega^{3a})}\ \text{and}\ g(\phi\omega^{6a})=\frac{g(\omega^{12a})\omega^{6a}(2^{-2})g(\phi)}{g(\omega^{6a})}.\notag
\end{align}

\noindent Using these two expressions and applying Gross Koblitz formula (Theorem \ref{gross-koblitz}) we deduce that

\begin{align}
I&=-p\phi(-1)\sum\limits_{\lambda\in\F_p\setminus\{0\}}\phi(\lambda)\sum_{a=0}^{p-2}\overline{\omega}^{3a}\left(\frac{4^4(1-\lambda)}{\lambda}\right)\pi^{(p-1)e_a}~
\frac{\Gamma_p(\langle\frac{3a}{p-1}\rangle)^3
\Gamma_p(\langle-\frac{12a}{p-1}\rangle)}{\Gamma_p(\langle-\frac{3a}{p-1}\rangle)},
\end{align}

where $e_a=3\langle\frac{3a}{p-1}\rangle+\langle\frac{-12a}{p-1}\rangle
-\langle\frac{-3a}{p-1}\rangle.$
Now by applying \eqref{new-prod-1}, \eqref{prod-2} and simplifying further we may write

\begin{align}
I&=p(p-1)\phi(-1)\sum\limits_{\lambda\in\F_p\setminus\{0\}}\phi(\lambda)\cdot {_9G_9}\left(\frac{(\lambda-1)^3}{\lambda^3}\right)_p.\notag
\end{align}

Now, replacing $\lambda$ by $-\lambda$ and then $\lambda$ by $1/\lambda$  we have 

 $$
I=p(p-1)\sum\limits_{\lambda\in\F_p\setminus\{0\}}\phi(\lambda)\cdot {_9G_9}((\lambda+1)^3)_p.
$$
Finally, replacing $\lambda$ by $\lambda-1$ and using the fact that $\lambda\rightarrow \lambda^3$ is a bijection we conclude the result.

\end{proof}

\begin{proposition}\label{hypergeometric-3}
Let $p>3$ be a prime. Then we have 

\begin{align}
\frac{S(4,\phi)_p}{p}&=p^2\phi(2)\cdot {_2F_1}(1/2)_p^2-p^2\phi(-1)\cdot{_2F_1}(-1)_p^2-p^2\phi(-2)\cdot{_3F_2}(1)_p\notag\\
&+\frac{1}{p(p-1)}\sum\limits_{\lambda\in\F_p}\phi(\lambda)\sum_{a=0}^{p-2}g(\varphi\omega^{a})g(\overline{\omega}^a)^3g(\varphi\omega^{2a})~\overline{\omega}^a\left(\frac{4(1-\lambda)}{\lambda}\right)+o(p^2).\notag
\end{align}
\end{proposition}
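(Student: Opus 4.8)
The plan is to start from the hypergeometric expression for $S(4,\phi)_p/p$ that we have already developed through Proposition \ref{proposition-hypergeometric} — namely, $S(4,\phi)_p/p = -p^2 + 2p + \sum_{\gamma \neq 0,\pm1} a_p(\gamma^2)^2$ — and to rewrite each piece of the character-twisted point count in terms of Gaussian and $p$-adic hypergeometric functions via Gauss and Jacobi sums. Recall that by Ono's relation \eqref{Trace-relation} we have $a_p(\lambda) = -p\,\phi(-1)\cdot{_2F_1}(\lambda)_p$, so $\sum_{\gamma\neq 0,\pm1} a_p(\gamma^2)^2 = p^2\sum_{\gamma\neq 0,\pm1}{_2F_1}(\gamma^2)_p^2$. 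The first step is to expand ${_2F_1}(\gamma^2)_p$ in terms of Gauss sums using Greene's definition together with the Gross–Koblitz formula, substitute this into the squared sum, and then perform the sum over $\gamma$; the sum over $\gamma$ of $\phi(\gamma^2)\cdots$ reduces, after the change of variable $\gamma^2 \to \lambda$ (which is $2$-to-$1$ onto squares), to character sums in $\lambda$ that disentangle into the pieces appearing on the right-hand side.

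The key organizational step is to isolate the "main terms" corresponding to the degenerate or small-support contributions. Concretely, I would separate the sum over multiplicative characters indexing the ${_2F_1}^2$ expansion into a diagonal/special part — which produces the three explicit $p^2$-scale terms $p^2\phi(2)\,{_2F_1}(1/2)_p^2$, $-p^2\phi(-1)\,{_2F_1}(-1)_p^2$, and $-p^2\phi(-2)\,{_3F_2}(1)_p$ (these come from the special $\lambda$-values $1/2$, $-1$, and the endpoint $\lambda=1$ respectively, which are exactly the exceptional loci forced by the quadratic twist structure and the relation $a_p(\gamma^2)$ between $\gamma$ and its various Legendre-related images) — from the "generic" part, which after applying Davenport–Hasse, the product formulas \eqref{prod-1}, \eqref{new-prod-1}, \eqref{prod-2}, and Gross–Koblitz, collapses into precisely the Gauss-sum sum $\frac{1}{p(p-1)}\sum_{\lambda}\phi(\lambda)\sum_{a=0}^{p-2} g(\varphi\omega^a)g(\overline\omega^a)^3 g(\varphi\omega^{2a})\,\overline\omega^a(4(1-\lambda)/\lambda)$ appearing in the statement. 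The error term $o(p^2)$ absorbs the remaining boundary corrections: the finitely many excluded values $\gamma = 0, \pm 1$ contribute $O(1)$, the terms coming from characters of small order (where Jacobi sums degenerate) contribute $O(p\log p)$ or similar, and there may be one genuinely subtler contribution requiring the asymptotic estimate \eqref{22Sep-eqn-2}, namely $\sum_{\lambda\neq 0,\pm1} a_p(\lambda)^2 = p^2 + o(p^2)$, to control a residual sum that is not individually evaluable in closed form.

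In carrying this out I would proceed in the following order: (1) write $S(4,\phi)_p/p$ via Proposition \ref{proposition-hypergeometric} and substitute $a_p(\gamma^2)^2 = p^2\,{_2F_1}(\gamma^2)_p^2$; (2) expand ${_2F_1}(\gamma^2)_p$ by Greene's formula, convert binomial coefficients to Jacobi sums and then to Gauss sums, yielding a double sum over characters $\chi, \psi$; (3) interchange summation and carry out $\sum_\gamma \phi(\gamma^2)\chi(\gamma^2)\psi(\gamma^2)(\cdots)$, using Lemma \ref{lemma-3} to detect the surviving characters; (4) identify the three closed-form special-value terms by tracking the exceptional $\lambda$; (5) apply Davenport–Hasse \eqref{dh} and the $p$-adic gamma product formulas to recognize the generic contribution as the claimed Gauss-sum sum; (6) bound all leftover contributions, invoking \eqref{22Sep-eqn-2} where needed, to show they are $o(p^2)$. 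The main obstacle I anticipate is step (3) combined with step (6): the bookkeeping of exactly which character-sum degeneracies occur — and verifying that each such degenerate stratum contributes no more than $o(p^2)$ rather than $O(p^2)$ — is delicate, because the quadratic twist $\gamma \mapsto \gamma^2$ interacts nontrivially with the six-fold symmetry of the $\lambda$-line from Proposition \ref{elliptic-proposition-1}(5), and one must be careful that the Legendre-symbol twist $\phi(\lambda)$ does not secretly reinstate a term one expects to have cancelled. A secondary technical point is ensuring that the normalizations of $\pi$-powers from repeated Gross–Koblitz applications match the power of $p$ stated, which requires a careful floor-function accounting of the type already illustrated in the proof of Proposition \ref{hypergeometric-1}.
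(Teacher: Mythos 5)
Your starting point matches the paper's: begin with Proposition \ref{proposition-hypergeometric}, use the $2$-to-$1$ nature of $\gamma\mapsto\gamma^2$ to write $\sum_{\gamma}a_p(\gamma^2)^2=\sum_{\lambda}(1+\phi(\lambda))a_p(\lambda)^2$, and invoke \eqref{22Sep-eqn-2} so that the untwisted part cancels the $-p^2$ and only $\sum_{\lambda\neq0,\pm1}\phi(\lambda)a_p(\lambda)^2$ survives up to $o(p^2)$; you also correctly anticipate that the three explicit terms come from bookkeeping of exceptional points. However, there is a genuine gap at the heart of your plan. The paper does \emph{not} square Greene's character-sum expansion of ${_2F_1}$ and apply orthogonality. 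Instead, after substituting $\lambda=(1-t)/2$, it applies the finite-field Clausen theorem (Theorem 1.7 of \cite{EG}) to convert ${_2F_1}\left(\frac{1-t}{2}\right)_p^2$ into a single ${_3F_2}$-type sum $\sum_{\chi}\binom{\varphi\chi}{\chi}^3\overline{\chi}(1-t^2)$ weighted by $\varphi(1+t)$; summing over $t$ then produces the Jacobi sum $J(\varphi\overline{\chi},\overline{\chi})$, hence the binomial coefficient $\binom{\varphi\chi^2}{\chi}$, hence the factor $g(\varphi\omega^{2a})$ with the doubled index in the target expression. Your route — expand ${_2F_1}(\lambda)_p$ by Greene's definition, square, and detect characters with Lemma \ref{lemma-3} — collapses to $\frac{p^2}{p-1}\sum_{\chi}\binom{\varphi\chi}{\chi}^2\binom{\overline{\chi}}{\varphi\overline{\chi}}^2$, a sum involving only the characters $\chi$ and $\varphi\chi$. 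No amount of Gross--Koblitz or floor-function bookkeeping will manufacture the $\omega^{2a}$ from that expression; you would need an additional quadratic transformation identity equivalent to Clausen's theorem, which is precisely the ingredient your proposal omits.

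Two smaller corrections. First, Davenport--Hasse and the product formulas \eqref{prod-1}, \eqref{new-prod-1}, \eqref{prod-2} are not used in this proposition at all; they belong to Propositions \ref{hypergeometric-1} and \ref{hypergeometric-2}, where the Gauss-sum expression on the right-hand side here is converted into the $p$-adic ${_3G_3}$ and ${_9G_9}$ functions. The right-hand side of the present proposition is still a classical Gauss-sum expression, so steps (5) and the "secondary technical point" about $\pi$-powers are misplaced. Second, the three main terms do not all come from exceptional $\lambda$-values: $p^2\phi(2)\,{_2F_1}(1/2)_p^2$ and $-p^2\phi(-1)\,{_2F_1}(-1)_p^2$ arise from the excluded points $t=0$ and $t=3$ (i.e.\ $\lambda=1/2$ and $\lambda=-1$) in the substitution $\lambda=(1-t)/2$, whereas $-p^2\phi(-2)\,{_3F_2}(1)_p$ arises only \emph{after} Clausen's theorem, from restoring the terms $t=0,\pm1$ to the transformed $t$-sum; it is a ${_3F_2}$ evaluated at argument $1$, not a contribution of $\lambda=1$.
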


\begin{proof}
By Proposition \ref{proposition-hypergeometric} and \eqref{22Sep-eqn-2} we may write

\begin{align}\label{final-eq-6}
\frac{S(4,\phi)_p}{p}=\sum\limits_{\lambda\neq0,\pm1}\phi(\lambda)a_p(\lambda)^2+o(p^2).    
\end{align}

Now, using \eqref{Trace-relation} and replacing $\lambda$ by $\frac{1-t}{2}$ we express

%%%%%%%%%%%%%%%%%%%%%%%%%%%%%%%%%

\begin{align}\label{eqn-6}
\sum\limits_{\lambda\neq0,\pm1}\phi(\lambda)a_p(\lambda)^2
&=p^2\varphi(2)~{_2F_1}\left(\frac{1}{2}\right)_p^2-p^2\phi(-1){_2F_1}(-1)_p^2+
p^2\cdot A,
\end{align}
where $A=\varphi(2)\sum\limits_{t\in\mathbb{F}_p^{\times}, t\neq\pm1}\varphi(1-t)~{_2F_1}\left(\frac{1-t}{2}\right)_p^2.$ Then using a special case of Theorem 1.7 of \cite{EG} and elementary calculation we obtain

\begin{align}
A&=-\frac{1}{p}-\frac{\varphi(2)}{p}+\frac{\varphi(-2)p}{p-1}\sum_{t\in\mathbb{F}_p^{\times}, t\neq\pm1}\varphi(1+t)
\sum_{\chi\in\widehat{\mathbb{F}_p^{\times}}}{\varphi\chi\choose\chi}^3\overline{\chi}(1-t^2)\notag.
\end{align}
Then adjusting the term under summation for $t=0,\pm1$ we can write
\begin{align}\label{eqn-7}
A&=-\frac{1}{p}-\frac{\varphi(2)}{p}-\varphi(-2)\cdot{_3F_2}(1)_p+\frac{\varphi(-2)p}{p-1}\sum_{t\in\mathbb{F}_p}\varphi(1+t)
\sum_{\chi\in\widehat{\mathbb{F}_p^{\times}}}{\varphi\chi\choose\chi}^3\overline{\chi}(1-t^2).
\end{align}
Now, consider the sum

\begin{align}
B&=\sum_{t\in\mathbb{F}_p}\varphi(1+t)
\sum_{\chi\in\widehat{\mathbb{F}_p^{\times}}}{\varphi\chi\choose\chi}^3\overline{\chi}(1-t^2)=\sum_{\chi\in\widehat{\mathbb{F}_p^{\times}}}{\varphi\chi\choose\chi}^3\sum_{t\in\mathbb{F}_p}
\varphi \overline{\chi}(1+t)\overline{\chi}(1-t)\notag\\
&=\varphi(2)\sum_{\chi\in\widehat{\mathbb{F}_p^{\times}}}{\varphi\chi\choose\chi}^3\overline{\chi}^2(2)J(\varphi\overline{\chi},\overline{\chi})=p\varphi(2)\sum_{\chi\in\widehat{\mathbb{F}_p^{\times}}}{\varphi\chi\choose\chi}^3
{\varphi\overline{\chi}\choose\chi}\chi\left(-\frac{1}{4}\right).\notag
\end{align}

It is easy to see that ${\varphi\overline{\chi}\choose\chi}=\chi(-1){\varphi\chi^2\choose\chi}$. Using this relation in the above sum we have

\begin{align}
B&=p\varphi(2)\sum_{\chi\in\widehat{\mathbb{F}_p^{\times}}}{\varphi\chi\choose\chi}^3{\varphi\chi^2\choose\chi}
\chi\left(\frac{1}{4}\right).\notag
\end{align}

Using \cite[Eq. (2.9)]{greene} and taking $\chi=\omega^a,$ it is straightforward to deduce that

\begin{align}\label{eqn-9}
B&=\frac{\varphi(-2)}{p^4}\sum_{a=0}^{p-2}
\frac{g(\varphi\omega^a)^2g(\overline{\omega}^a)^4g(\varphi\omega^{2a})}{g(\varphi)}~\overline{\omega}^a(4)-\frac{p-1}{p^3}\varphi(-2)\notag\\
&=\frac{\varphi(-2)}{p^4}\sum\limits_{a=0}^{p-2}g(\varphi\omega^a)g(\overline{\omega}^a)^3g(\varphi\omega^{2a})J(\phi\omega^a,\overline{\omega}^a) ~\overline{\omega}^a(4)-\frac{p-1}{p^3}\varphi(-2)\notag\\
&= \frac{\varphi(-2)}{p^4}\sum\limits_{\lambda\in\F_p}\phi(\lambda)\sum\limits_{a=0}^{p-2}g(\varphi\omega^a)g(\overline{\omega}^a)^3g(\varphi\omega^{2a}) ~\overline{\omega}^a\left(\frac{4(1-\lambda)}{\lambda}\right)
-\frac{p-1}{p^3}\varphi(-2).
\end{align}

Now, substituting \eqref{eqn-9} into \eqref{eqn-7} and then again substituting the resultant identity of $A$ into 
\eqref{eqn-6} we deduce the result.

\end{proof}

\begin{proof}[Proof of Theorem~\ref{3G3-moment}]
By Theorem 2 and Theorem 3 of \cite{ono}  we may write 
$$p^2\cdot{_2F_1}(1/2)_p^2=p^2\cdot{_2F_1}(-1)_p^2=
p^2\cdot{_3F_2}(1)_p=o(p^2).$$

\noindent Combining Proposition \ref{hypergeometric-3} and Proposition \ref{hypergeometric-1} we relate $S(4,\phi)_p$ and the weighted average of ${_3G_3(\lambda)_p}.$ Finally, we conclude the result by using the above asymptotic identities and
Corollary \ref{twisted-moment-1}.

\end{proof}

\begin{proof}[Proof of Theorem~\ref{9G9-moment}]
The proof is analogous to the proof of Theorem~\ref{3G3-moment}. The only change is that we use Proposition~\ref{hypergeometric-2} to establish connection between ${_9G_9(\lambda)_p}$ and the character sum that is associated to $S(4,\phi)_p$ as given in Proposition \ref{hypergeometric-3}.
\end{proof}

\section{Acknowledgements}
The author is supported by Science and Engineering Research Board [CRG/2023/003037].

\end{document}